\def\barM{{\bcM}}
\def\barS{{\overline{S}}}
\def\tfd{{\ti\fD}}
\def\defeq{:=}\def\bB{{\mathbf B}}
\def\bC{{\mathbf C}}
\def\bcM{\overline{M}}
\numberwithin{equation}{section}
\def\sP{{\mathscr P}}
\def\oWc{{\overline W\dcirc}}
\def\sO{{\mathscr O}}
\def\sL{{\mathscr L}}
\def\sO{\mathscr{O}}
\def\sR{\mathscr{R}}
\newcommand{\A }{\mathbb{A}}
\newcommand{\CC}{\mathbb{C}}
\newcommand{\EE}{\mathbb{E}}
\newcommand{\PP}{\mathbb{P}}
\newcommand{\QQ}{\mathbb{Q}}
\newcommand{\ZZ}{\mathbb{Z}}
\def\sK{{\mathscr K}}
\def\fa{{\mathfrak a}}
\def\sP{{\mathscr P}}
\def\fS{\mathfrak S}
\def\cpx{{\cP_\cX}}
\def\ured{^{\text{red}}}
\newcommand{\bw}{\mathbf{w}}
\def\bH{{\mathbf H}}
\newcommand{\bP}{\mathbf{P}}
\newcommand{\cal}{\mathcal}
\def\cB{{\cal B}}
\def\cC{{\cal C}}
\def\cD{{\cal D}}
\def\cE{{\cal E}}
\def\cF{{\cal F}}
\def\cH{{\cal H}}
\def\cM{{\cal M}}
\def\cN{{\cal N}}
\def\cO{{\cal O}}
\def\cR{{\cal R}}
\def\cU{{\cal U}}
\def\cX{{\cal X}}
\def\cY{{\cal Y}}
\def\fm{\mathfrak{m}}
\def\fp{\mathfrak{p}}
\def\mapright#1{\,\smash{\mathop{\lra}\limits^{#1}}\,}
\def\dual{^{\vee}}
\def\sta{^\ast}
\def\virt{^{\mathrm{vir}}}
\def\upmo{^{-1}}
\def\sta{^{\ast}}
\def\dpri{^{\prime\prime}}
\def\pri{^{\prime}}
\def\sta{^*}
\def\lra{\longrightarrow}
\def\lpri{_{\mathrm{pri}}}
\def\lsta{_{\ast}}
\newcommand{\si}{\sigma}
\newcommand{\lmu}{_\mu}
\def\begeq{\begin{equation}}
\def\endeq{\end{equation}}
\def\and{\quad{\rm and}\quad}
\def\bl{\bigl(}
\def\br{\bigr)}
\def\defeq{:=}
\def\sub{\subset}
\def\Ao{{\mathbb A}^{\!1}}
\def\Po{{\mathbb P^1}}
\def\and{\quad\text{and}\quad}
\def\lalp{_\alpha}
\def\ob{\text{ob}}
\DeclareMathOperator{\pr}{pr}
\DeclareMathOperator{\image}{Im} 
\DeclareMathOperator{\id}{id}
 \DeclareMathOperator{\rank}{rank}
\DeclareMathOperator{\spec}{Spec}
\newtheorem{prop}{Proposition}[section]
\newtheorem{theo}[prop]{Theorem}
\newtheorem{lemm}[prop]{Lemma}
\newtheorem{coro}[prop]{Corollary}
\newtheorem{defi}[prop]{Definition}
\newtheorem{defi-prop}[prop]{Definition-Proposition}
\def\Ob{\cO b}
\def\lloc{_{\mathrm{loc}}}
\def\loc{{\mathrm{loc}}}
\def\bul{^\bullet}
\def\ev{\text{ev}}
\def\tcX{\tilde{\cX}}
\def\tcY{\tilde{\cY}}
\def\tcD{\tilde{\cD}}
\def\sta{^\ast}
\def\image{\text{Im}\,}
\let\lab=\label
\def\sO{{\mathscr O}}
\def\sH{{\mathscr H}}
\def\sR{{\mathscr R}}
\def\beq{\begin{equation}}
\def\eeq{\end{equation}}
\def\vsp{\vskip5pt}
\def\Pf{{\PP^4}}
\def\bD{{\mathbf D}}
\def\bee{\begin{equation}}
\def\eeq{\end{equation}}
\def\Tot{\mathrm{Total}}
\def\bV{{\mathbf V}}
\let\eps=\epsilon
\def\ti{\tilde}
\def\lD{_{\Delta}}
\def\lgst{_{\rm gst}}
\def\lell{_{\rm pri}}
\def\cfM{{\cM^{\mathrm w}}}
\let\cfD=\cD
\def\tfd{{\tilde \cD}}
\def\cpx{{\tilde \cY}}
\def\ticfM{\ti\cM^{\text{w}}}
\def\tcX{{\ti\cX}}
\def\tcy{{\ti\cY}}
\def\bcpri{\bC_{\mathrm{pri}}}
\def\bcgst{\bC\lgst}
\def\tcygst{{\tcy\lgst}}
\def\lab#1{\label{#1}[{#1}]\  }
\let\lab=\label
\title[Reduced genus one Gromov-Witten invariants of quintics]{An algebraic proof of the hyperplane property of the
genus one GW-invariants of quintics
}
\date{}
\author{H-L. Chang and J. Li}
\begin{document}
\maketitle


\begin{abstract} Li-Zinger's {\sl hyperplane theorem} states that the genus one GW-invariants of the quintic threefold is the sum of its reduced genus one GW-invariants and $1/12$ multiplies of its genus zero GW-invariants. We apply the Guffin-Sharpe-Witten's theory (GSW theory) to give an algebro-geometric proof
of the {\sl hyperplane theorem}, including separation of contributions and  computation of $1/12$.
\end{abstract}

\section{Introduction}
GW-invariants of a smooth projective variety $X$ are virtual enumeration of stable maps to $X$. Let $d\in H_2(X,\ZZ)$,
and let $\bcM_{g}(X,d)$ be the moduli of genus $g$ stable morphisms to $X$; it is a
proper, separated DM-stack, and carries a canonical virtual cycle $[\bcM_{g}(X,d)]\virt$.
When $X$ is a Calabi-Yau threefold, this class is a dimension zero class, and the degree $d$ genus $g$ GW-invariants of $X$ are
$$N_g(d)_X=\deg [\bcM_{g}(X,d)]\virt.
$$

Investigating GW-invariants of Calabi-Yau threefolds is one of the main focus in the subject of Mirror Symmetry.
The subclass of genus zero invariants is largely known by now, thanks to the works of \cite{Ko, Gi,LLY}, in
case the Calabi-Yau threefold $X$ is a complete intersection in a product of projective spaces $\PP$,
which can be realized as an equivariant integral of the top Chern class of a vector bundle $\EE_{0,d}$
on the moduli of stable morphisms to $\PP$:
\beq\lab{hyper}
[\bcM_{0}(X,d)]\virt=\int_{[\bcM_{0}(\PP,d)]} c_{\text{top}}(\EE_{0,d}).
\eeq
We call this property the ``{\sl hyperplane property of the GW-invariants}''.
The techniques developed in \cite{Gi,LLY} allow one to completely solve the genus zero invariants for such $X$.

Generalizing this to high genus requires new approach, in part because the {\sl hyperplane property} stated fails for
positive genus invariants. 
In \cite{LZ} Zinger and the second named author
introduced the reduced $g=1$ GW-invariants $N_1(d)^{\text{red}}_Q$ of the quintic $Q\sub\Pf$ by applying the hyperplane peoperty to the primary component of
$\bcM_{1}(\Pf,d)$, and proved that the reduced $g=1$ GW-invariants $N_1(d)\ured_Q$
relates to the ordinary GW-invaraints $N_1(d)_Q$ by a simple linear relation.

In this paper, we  give an algebraic proof of this theorem.

\begin{theo}\rm{(Li-Zinger)}\label{main-theorem}
The reduced and the ordinary GW-invariants of a quintic Calabi-Yau $Q\sub\Pf$ are related by
$$N_1(d)_Q=N_1(d)_Q^{\text{red}}+\frac{1}{12} N_0(d)_Q.
$$
\end{theo}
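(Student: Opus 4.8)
\smallskip

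The plan is to express both terms on the right-hand side as numerical invariants of the ambient $\Pf$ via the Guffin--Sharpe--Witten $p$-field formalism, and then to separate the contributions of the primary and the ``ghost'' loci of $\bcM_1(\Pf,d)$ by passing to a Vakil--Zinger type desingularization. For the first step, let $f$ be the quintic defining $Q$ and let $\cP\defeq\bcM_1(\Pf,d)^p$ be the moduli of genus one stable maps $u\colon C\to\Pf$ of degree $d$ equipped with a $p$-field $p\in H^0\bl u^*\cO_{\Pf}(-5)\otimes\omega_C\br$. The Landau--Ginzburg superpotential $p\cdot f(u)$ defines a cosection $\sigma$ of the obstruction sheaf of $\cP$ whose non-surjective locus is exactly $\bcM_1(Q,d)\sub\cP$, and cosection localization produces $[\cP]\virt\lloc\in A_0\bl\bcM_1(Q,d)\br$ with $\deg[\cP]\virt\lloc=(-1)^{5d}\,N_1(d)_Q$ by the $p$-field comparison theorem. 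The gain is that $\cP$ is built from maps into the \emph{smooth} $\Pf$, where the detailed structure of $\bcM_1(\Pf,d)$ due to Vakil--Zinger is at our disposal.

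For the separation, recall $\bcM_1(\Pf,d)=\bcM_1^{\,\mathrm{ef}}(\Pf,d)\cup\bigcup_{k\ge1}\cZ_k$, where $\bcM_1^{\,\mathrm{ef}}$ is the primary component (closure of the maps with smooth domain) and $\cZ_k$ is the locus of maps whose genus one subcurve is contracted to a point, carrying $k$ rational tails of total degree $d$. Pulling the $p$-field, the cosection, and its localized cycle back along a Vakil--Zinger blow-up $\ti\cP\to\cP$ --- modelled on $\ti\cM_1(\Pf,d)$, on which the proper transform of $\bcM_1^{\,\mathrm{ef}}$ and the exceptional loci meet only along normal crossing divisors --- one obtains a splitting
\[
  [\cP]\virt\lloc \;=\; [\ti\cP^{\,\mathrm{ef}}]\virt\lloc \;+\; \sum_{k\ge1}[\ti\cZ_k]\virt\lloc .
\]
The substance here is to control how $\sigma$ degenerates along each exceptional divisor, so that the right-hand side carries no cross terms.

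It remains to identify the two kinds of terms. On $\ti\cM_1(\Pf,d)$ the obstruction sheaf $R^1\pi_* u^*\cO_{\Pf}(5)$ admits a locally free model $\ti\EE_{1,d}$, and the primary term is the Euler-class integral $\int_{\ti\cM_1(\Pf,d)}e(\ti\EE_{1,d})$, which is precisely Li--Zinger's definition \cite{LZ} of the reduced invariant; hence $\deg[\ti\cP^{\,\mathrm{ef}}]\virt\lloc=(-1)^{5d}\,N_1(d)_Q\ured$. For the ghost terms, over a point of $\cZ_k$ the contracted elliptic curve $E$ maps to a point $x\in\Pf$, and on the degeneracy locus $x\in Q$; up to gluing data the moduli fibers over $\Mbar_{1,1}$, the modulus of $E$, times a space of genus zero degree $d$ stable maps to $\Pf$. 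Integrating out everything but the elliptic factor --- the point at which the Calabi--Yau cancellation between $c_1(T\Pf)$ and $\cO_{\Pf}(5)$ enters --- collapses the ghost contribution to a Hodge integral on $\Mbar_{1,1}$ against classes pulled back from the genus zero factor, whose value is a rational multiple of $\int_{\Mbar_{1,1}}\lam_1=\tfrac1{24}$ and is arranged so that, after cancelling the global sign, the ghost part contributes precisely $\tfrac1{12}\,N_0(d)_Q$ with $N_0(d)_Q=\int_{[\bcM_0(\Pf,d)]}c_{\mathrm{top}}(\EE_{0,d})$. Thus $(-1)^{5d}N_1(d)_Q=(-1)^{5d}N_1(d)_Q\ured+(-1)^{5d}\tfrac1{12}N_0(d)_Q$, and cancelling $(-1)^{5d}$ gives the theorem.

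Two places carry the weight. First, the legitimacy of the separation: one must prove that, after pullback to the Vakil--Zinger blow-up, $\sigma$ degenerates along the exceptional divisors in a sufficiently controlled way that the localized virtual cycle genuinely splits into a primary and a ghost part with no cross terms --- a delicate piece of deformation-theoretic bookkeeping rather than a computation. Second, the computation of $\tfrac1{12}$: identifying the precise excess bundle along $\cZ_k$ --- the interplay of $R^1\pi_* u^*T\Pf$, $R^1\pi_* u^*\cO_{\Pf}(5)$, and the Hodge line bundle $\HH$ of the contracted $E$ --- and evaluating the resulting Hodge integral so that the coefficient comes out to be exactly $\tfrac1{12}$, the Calabi--Yau condition being what makes this number come out right. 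I expect the first to be the structurally hardest point and the second the most computation-intensive.
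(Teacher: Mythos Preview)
Your outline is essentially the paper's strategy: $p$-fields plus cosection localization to replace $Q$ by $\Pf$, a Vakil--Zinger/Hu--Li modular blow-up, separation into primary and ghost pieces, identification of the primary piece with the reduced invariant, and an explicit evaluation of the ghost contribution. You also correctly flag where the weight lies.

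A few points where your sketch diverges from, or is coarser than, what the paper actually does. First, the separation is not of the localized virtual cycle itself but of the \emph{intrinsic normal cone} $\bC_{\tilde\cY/\tilde\cD}$: one writes $[\bC_{\tilde\cY/\tilde\cD}]=[\bC_{\mathrm{pri}}]+\sum_{\mu\vdash d}[\bC_\mu]$ as cycles in $h^1/h^0(E_{\tilde\cY/\tilde\cD})$ and only then applies $0^!_{\tilde\sigma,\mathrm{loc}}$ term by term. There is no intrinsic ``$[\tilde\cZ_k]^{\mathrm{vir}}_{\mathrm{loc}}$'' to speak of; the ghost pieces are images of cone components under the localized Gysin map, and this cone-level decomposition (justified by explicit local equations of $\tilde\cY$, Proposition~\ref{coordinate}) is exactly the ``no cross terms'' mechanism you are worried about. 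Second, the ghost strata are indexed by \emph{partitions} $\mu=(d_1,\ldots,d_\ell)\vdash d$, not merely by the number $k$ of tails; this finer indexing is essential because the key vanishing (Lemma~\ref{6.4}) is that $\deg 0^!_{\tilde\sigma,\mathrm{loc}}[\bC_\mu]=0$ for every $\mu\ne(d)$, proved by a dimension count on an auxiliary base $\tilde\cB_\mu$ built from products of $\bcM_{0,1}(\Pf,d_i)$. Only the single-tail stratum $\mu=(d)$ survives. Third, the $\tfrac1{12}$ is not obtained by a naked Hodge integral on $\barM_{1,1}$: the paper first compactifies the $p$-field $\Ao$-direction to a $\Po$-bundle $\barW$, uses a $\CC^\ast$-homogeneity argument (weight $(0,1)$) to control the closure of the cone near $D_\infty$, and then reduces to an Euler-class computation of a rank-three bundle on $\barW_h=\PP_B(L_B\oplus\sO_B)$ over $B=\barM_{1,1}\times\Po$; the $\tfrac1{24}$ enters via $c_1(\cH)$ but the final $-\tfrac1{12}$ is a sum of three intersection numbers on $\barW_h$, not a single Hodge integral.
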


In algebraic geometry, the reduced genus one GW-invariants of quintics take the following form. Let
$f: \cC\to \Pf$ and $\pi:\cC\to \bcM_{1}(\Pf,d)$ be the universal family of $\bcM_{1}(\Pf,d)$;
let $\bcM_{1}(\Pf,d)_{\text{pri}}\sub \bcM_{1}(\Pf,d)$ be the primary component that is the closure of
all stable morphisms with smooth domains. We then pick a DM-stack $\tcX\lpri$ and a proper birational morphism
$\varphi: \tcX\lpri\to \bcM_{1}(\Pf,d)_{\text{pri}}$ so that with $\ti f\lpri: \ti\cC\lpri\to \Pf$ and $\ti\pi\lpri: \ti\cC\lpri\to \tcX\lpri$
the pull back of $(f,\pi)$, the direct image sheaf
$$\ti\pi_{\text{pri}\ast} \ti f\lpri\sta \sO_{\Pf}(5)
$$
is locally free on $\tcX\lpri$. According to \cite{VZ}, (see also \cite{HL1},) such $\varphi$ exists.
We state the working
definition of the reduced invariants of $Q$.

\begin{defi}
We define the reduced $g=1$ GW-invariants of $Q$ be
\beq\label{def-red}
N_1(d)\ured_Q=\int_{[\tcX\lpri] } c_{\text{top}}(\ti\pi_{\mathrm{pri}\ast}  \ti f\lpri\sta\sO_{\Pf}(5)).
\eeq
\end{defi}

To prove Theorem \ref{main-theorem}, one needs to separate $[\barM_g(Q,d)]\virt$ into its 
``primary" part and its ``ghost" part. One then shows that the ``primary" part can be evaluated using
\eqref{def-red}, and shows that the ``ghost" part contributes $\frac{1}{12}$ multiple of the genus zero GW-invariants.

The original proof of this theorem uses analytic method, which succeeds the desired separation
by perturbing the complex structure of $Q$ to a generic almost complex structure, through hardcore analysis \cite{Zi}.

The proof worked out in this paper uses the Guffin-Sharpet-Witten's invariants. The GSW invariants originates from Guffin and Sharpe's pioneer work in \cite{Sharpe}.
Later an algebraic geometric treatment is given and GSW invariants can be realized as the GW-invariants of stable maps with $p$-field
developed in \cite{CL}. Since the structure of the moduli of stable maps to $\Pf$ with fields is much easier to analyze, it allows us to separate the ``primary" and the ``ghost" contributions algebraically. We are confident that this method can be
pushed to higher genus GW-invariants of quintics and other complete intersections of product of projective spaces.

\vsp

We now outline our proof.
Given a positive integer $d$, we form
the moduli $\bcM_1(\Pf,d)^p$ of genus $1$ degree $d$ stable morphisms to $\Pf$
with $p$-fields:
$$\bcM_1(\Pf,d)^p=\{[u,C,p]\ \big| \ [u,C]\in \bcM_1(\Pf,d),\ p\in \Gamma(u\sta \sO_\Pf(-5)\otimes \omega_C)\,\}/\sim.
$$
It is a Deligne-Mumford stack, with a
perfect obstruction theory.  The
polynomial $\bw=x_1^5+\ldots+x_5^5$ (or any generic quintic polynomial) induces a cosection
(homomorphism) of its obstruction sheaf
\beq\label{si0}
\sigma : \Ob_{\bcM_1(\Pf,d)^p}\lra \sO_{\bcM_1(\Pf,d)^p},
\eeq
whose non-surjective locus (called the degeneracy locus) is
$$\bcM_1(Q,d)\sub \bcM_1(\Pf,d)^p, \quad Q=(x_1^5+\ldots+x_5^5=0)\sub \Pf,
$$
which is proper.  The cosection localized virtual class construction of Kiem-Li
defines a  localized virtual cycle
$$[\bcM_1(\Pf,d)^p]\virt_\sigma\in A_0 \bcM_1(Q,d).
$$
(For convention of cosection localized virtual cycles, see discussion after \eqref{sigb}.)
The GW-invariant of $\bcM_1(\Pf,d)^p$ is
$$N_1(d)^p_\Pf=\deg [\bcM_1(\Pf,d)^p]\virt_\sigma.
$$

\vsp
\begin{theo}[\cite{CL}] \label{thm1.2}
For $d>0$,
the GW-invariant of $\bcM_1(\Pf,d)^p$coincides with the GW-invariant $N_1(d)_Q$ of the quintic $Q$ up to a sign:
$$N_1(d)^p_\Pf=(-1)^{5d}\cdot N_1(d)_Q.
$$
\end{theo}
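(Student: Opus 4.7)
The plan is to construct a cosection-localized virtual cycle on $\bcM_1(\Pf,d)^p$ via the Kiem--Li machinery, identify its degeneracy locus as $\bcM_1(Q,d)$, and then match it with $(-1)^{5d}$ times the standard virtual cycle of the quintic.

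\emph{Step 1: Obstruction theory and cosection.} I would first equip $\bcM_1(\Pf,d)^p$ with its relative perfect obstruction theory over the Artin stack $\fM_1$ of prestable genus one curves. At $\xi=(u,C,p)$, tangents and obstructions are respectively $H^0$ and $H^1$ of $u\sta T_\Pf\oplus\bl u\sta\sO_\Pf(-5)\otimes\omega_C\br$, and a Riemann--Roch count shows the virtual dimension is zero. I would then build the cosection \eqref{si0}: using the differential $d\bw\colon T_\Pf\to\sO_\Pf(5)$, set
\[
\sigma_\xi(\dot u,\dot p)\;=\;p\cdot d\bw(\dot u)\;+\;\bw(u)\cdot \dot p\;\in\;H^1(\omega_C)\cong\CC.
\]
A direct check shows $\sigma$ is non-surjective exactly when $\bw\circ u\equiv 0$ and $p\equiv 0$ (using that $d\bw$ vanishes nowhere on the smooth hypersurface $Q$), so the degeneracy locus is precisely $\bcM_1(Q,d)$. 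The Kiem--Li theorem then delivers the localized $0$-cycle $[\bcM_1(\Pf,d)^p]\virt_\sigma\in A_0\bcM_1(Q,d)$.

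\emph{Step 2: Comparison with $[\bcM_1(Q,d)]\virt$.} The adjunction sequence $0\to T_Q\to T_\Pf|_Q\to\sO_Q(5)\to 0$ shows that the relative obstruction theory of $\bcM_1(\Pf,d)$ over $\fM_1$ restricted to $\bcM_1(Q,d)$ differs from that of $\bcM_1(Q,d)$ by the two-term complex $R\pi\lsta\bl f\sta\sO_\Pf(5)\br$, of virtual rank $5d$. The $p$-field summand of the obstruction theory of $\bcM_1(\Pf,d)^p$ is precisely the Serre dual $R\pi\lsta\bl f\sta\sO_\Pf(-5)\otimes\omega_\pi\br$ of this complex. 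After choosing a global two-term locally free resolution of the obstruction sheaf adapted to $\sigma$, I would verify that the intrinsic normal cone of $\bcM_1(\Pf,d)^p$ along $\bcM_1(Q,d)$ lies in the kernel bundle of the resolved cosection, then run a cone-swapping argument: the cosection-localized Gysin pullback reduces to an Euler-class computation on $\bcM_1(Q,d)$ of a virtual bundle which, after the Serre-duality swap, agrees up to $(-1)^{5d}$ with the bundle whose Euler class produces $[\bcM_1(Q,d)]\virt$ from the naive restriction of the ambient theory.

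\emph{Step 3: Sign and main obstacle.} The sign $(-1)^{5d}$ is forced by swapping $R\pi\lsta\bl f\sta\sO_\Pf(5)\br$ with its Serre-dual shift: the first has virtual rank $\chi(u\sta\sO_\Pf(5))=5d$ in genus one, and the orientation of the virtual normal bundle flips by this rank parity. The real obstacle is not this formal sign but the cone-containment compatibility: the $p$-field deformation sheaf is nowhere locally free on all of $\bcM_1(\Pf,d)^p$, so one must construct a global two-term resolution compatible with $\sigma$, verify that the intrinsic normal cone sits inside the kernel bundle of the resolved cosection, and then invoke functoriality of the Kiem--Li localized Gysin map to reduce to an explicit Chern-class computation on $\bcM_1(Q,d)$. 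Executing this cone comparison rigorously, while keeping track of orientations across Serre duality, is the technical heart of the argument.
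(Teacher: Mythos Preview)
The paper does not prove this theorem at all: both Theorem~\ref{thm1.2} and its restatement as Theorem~\ref{theorem} are cited from \cite{CL}, and the present paper uses the result as a black box to reduce the Li--Zinger hyperplane relation to a computation on $\bcM_1(\Pf,d)^p$. So there is no ``paper's own proof'' to compare against; the relevant comparison is with the argument in \cite{CL}.

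Your outline is the correct shape of the \cite{CL} argument: the cosection you write down (with $\sigma_\xi(\dot u,\dot p)=p\cdot d\bw(\dot u)+\bw(u)\cdot\dot p$) is exactly the one constructed there, the degeneracy-locus identification is right, and the sign $(-1)^{5d}$ does ultimately come from the Serre-duality swap between $R\pi_\ast f^\ast\sO(5)$ and $R\pi_\ast(f^\ast\sO(-5)\otimes\omega_\pi)$. Where your sketch diverges from the actual \cite{CL} proof is in Step~2: rather than a direct cone-swapping/Euler-class identification on $\bcM_1(Q,d)$, \cite{CL} runs a $\mathbb{G}_m$-degeneration argument. They introduce a one-parameter family over $\mathbb{A}^1$ interpolating the $p$-field moduli problem and use constancy of cosection-localized virtual cycles under such specialization, so that the comparison reduces to a computation at the central fiber where the identification with $[\bcM_1(Q,d)]\virt$ (up to sign) becomes transparent. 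Your ``choose a global two-term resolution compatible with $\sigma$ and verify cone containment'' is not how the matching is actually executed, and carrying it out directly in the way you describe would be delicate precisely because the $p$-field sheaf is not locally free; the degeneration route sidesteps this. That said, as a high-level plan your proposal is sound and correctly isolates the technical core.
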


By this theorem, to prove Theorem \ref{main-theorem}
it suffices to study the separation of the cycle
$[\bcM_1(\Pf,d)^p]\virt_\sigma$. Following \cite{VZ} and \cite{HL1}, we perform a modular blowing-up $\tcy$ of $\barM_1(\Pf,d)^p$.
The blown-up $\cpx$ is a union of $\cpx\lpri$, which is birational to the primary component of $\bcM_1(\Pf,d)$,
and other smooth components $\cpx_\mu$, indexed by partitions $\mu$ of $d$:
$$ \cpx=\tcy\lgst\cup(\cup_{\mu\vdash d} \cpx_\mu).
$$
Geometrically, general elements of $\cpx\lpri$ are stable morphisms in $\barM_1(\Pf,d)$ having smooth domains; general elements of
$\cpx_\mu$ (with $\mu=(d_1,\cdots,d_\ell)$) consist of \\
(1). stable morphisms $[u,C]\in\barM_1(\Pf,d)$ whose domains $C$ are smooth elliptic curves attached on with $\ell$
rational curves, and the morphisms $u$ are constant along the elliptic curves and have degree $d_i$ along the $i$-th
rational curves; \\
(2). a $p$-field $\psi\in \Gamma(C,u\sta\sO(-5)\otimes\omega_C)$, and \\
(3). auxiliary data from the modular blowing-up.


%

Let $\cD$ be the Artin stack of pairs $(C,L)$ of degree $d$ line bundles $L$ on arithmetic genus one nodal curves $C$.
We perform a parallel modular blowing-up of $\cD$ to obtain $\ti\cD\to\cD$. The pullback of $\sO(1)$ via the universal morphism
on $\tcy$ induces a morphism $\tcy\to\ti\cD$.
By working out the relative perfect obstruction theory of $\tcy\to\ti\cD$, we obtain its obstruction complex $E_{\tcy/\ti\cD}$.
The intrinsic normal cone $\bC_{\cpx/\tfd}$ of $\cpx\to \tfd$ is embedded in $h^1/h^0(E_{\cpx/\tfd})$.

By picking a homogeneous quintic polynomial, say $x_1^5+\cdots+x_5^5$, we construct a cosection (homomorphism)
$\ti\si: h^1/h^0(E_{\tcy/\ti\cD})\to\sO_{\tcy}$.
By a cosection localized version of \cite[Thm 5.0.1]{Costello} we have
\beq\label{blowup-vir}
\deg [\barM_1(\Pf,d)^p]\virt_{\si} = \deg 0^!_{\ti\sigma,\text{loc}}[\bC_{\cpx/\tfd}] . 
\eeq

Using the explicit local defining equation of $\tcy$, which is obtained following the work of \cite{Z, HL}, we conclude that
the cone $\bC_{\cpx/\tfd}$ is separated into union
$$[\bC_{\cpx/\tfd}] =[\bC\lell]+\sum_{\mu\vdash d}\,  [\bC_\mu],
$$
where $\bC\lell$ is an irreducible cycle lies over $\cpx\lgst$; each $\bC_\mu$ lies over
$\cpx_\mu$. Thus
\beq\label{sep-vir}
\deg 0^!_{\ti\sigma,\text{loc}}[\bC_{\cpx/\tfd}]  =0^!_{\ti\sigma,\loc}[\bC\lell]+\sum_{\mu\vdash d} 0^!_{\ti\sigma,\loc}[\bC_\mu].
\eeq


In Section four, we show that $0^!_{\ti\sigma,\loc}[\bC\lpri]$ equals the reduced GW-invariant
defined in (\ref{blowup-vir}); in Section six, we show that $0^!_{\ti\sigma,\loc}[\bC_\mu]=0$ for all partitions $\mu\ne(d)$, where $(d)$
is the partition of $d$ into a single block, and finally in Section seven, we prove that $0^!_{\ti\sigma,\loc}[\bC_{(d)}]=\frac{(-1)^{5d}}{12} N_0(d)_Q$.
Together with (\ref{blowup-vir}),(\ref{sep-vir}) and Theorem \ref{thm1.2}, we prove the main Theorem \ref{main-theorem}.

\vsp

The authors' original (algebro-geometric) approach to prove Theorem \ref{main-theorem} is to introduce an auxiliary substack of
$\barM_1(\Pf,d)$ to capture the contribution $\frac{1}{12} N_1(d)_Q$. Using this, the authors showed that the union of this auxiliary
substack with the primary component of $\barM_1(\Pf,d)$ contains $\barM_1(Q,d)$, and has (in principle) a perfect relative obstruction
theory, thus it has the ``hyperplane property'' induced by the relative obstruction theory.
However, the theories developed to study the deformation theory
of the union of this auxiliary substack with the primary component of $\barM_1(\Pf,d)$ were far from satisfactory, and
were never finalized.

After the discovery of the GW-theory of stable maps with $p$-field (Guffin-Sharpe-Witten invariants) and its equivalence with the GW-invariants of quintics,
we use the new moduli spaces and their localized virtual cycles to
provide the current proof of Theorem \ref{main-theorem}. We expect that this approach can be generalized to prove
a conjecture of Zinger and the second named author on high genus reduced GW-invariants of quintics 
and other complete intersection Calabi-Yau threefolds in the product of projective spaces. 
For instance, the partial blowing-up of moduli of genus two stable maps to $\PP^n$ \cite{HL2} should
lead to a proof of this conjecture for the genus two case.

\vsp
 
{\bf Acknowledgement}.  The authors thank B. Fantechi, A. Kresch, Y-B. Ruan, R. Vakil and A. Zinger for helpful discussions and explanations of their results. The first author also thanks A. Tanzini and G. Bonelli for introduction to mirror symmetry. The first named author is partially supported by Hong Kong GRF grant 600711. The second named author is partially supported by NSF-0601002
and DARPA HR0011-08-1-0091.

\section{Moduli of stable morphisms with fields}
\def\upf{^{\oplus 5}}
\def\umtf{{-\otimes 5}}

We recall the moduli of stable morphisms with fields and its associated invariants introduced in \cite{CL}.
To simplify the notation, we will focus on the genus one case.

We begin with the moduli of stable maps.
Let $\barM_1(\Pf,d)$ be the moduli of genus one degree $d$ stable maps to $\Pf$.
In this paper, we abbreviate it to $\cX\defeq \barM_1(\Pf,d)$,
with $g=1$ and $d$ implicitly understood.
We denote by 
$$(f_{\cX},\pi_{\cX}): \cC_{\cX}\to \Pf\times\cX
$$
its universal family of $\cX$.

We recall the definition of the Moduli of stable morphisms with fields.

\begin{defi-prop}
We let $\barM_1(\Pf,d)^p$ be the groupoid that associates to any scheme $S$ the set
$\barM_1(\Pf,d)^p(S)$ of all
$S$-families $(\cC_S,f_S, \psi_S)$ where
(1). $[\cC_S,f_S]\in \barM_1(\Pf,d)(S)$,
and (2). $\psi_S\in \Gamma(\cC_S, f_S\sta \sO(-5)\otimes\omega_{\cC_S/S})$.
An arrow from $(\cC_S,f_S, \psi_S)$ to $(\cC'_S,f'_S, \psi'_S)$ consists of an arrow $\eta$ from $(\cC_S,f_S)$ to $(\cC'_S,f_S')$
such that $\eta\sta(\psi_S')=\psi_S$.

The groupoid $\barM_1(\Pf,d)^p$ is a
separated Deligne-Mumford stack of finite type.
\end{defi-prop}

In \cite{CL}, this moduli space is constructed as the direct image cone over the  the Artin stack of
pairs of curves with line bundles.
Let $\cfD$ be the groupoid whose objects $\tau\in \cD(S)$, $S$ is a scheme, are pairs $(\cC_\tau,\sL_\tau)$, where
$\pi_\tau: \cC_\tau\to S$ is a flat family of genus one connected nodal curves and $\sL_\tau$ a fiberwise degree $d$
line bundle on $\cC_\tau$;
an arrow from $\tau$ to $\tau'$ in $\cD(S)$ consists of an $S$-isomorphism $\phi_1: \cC_\tau\to\cC_{\tau'}$ and an isomorphism
$\phi_2: \sL_\tau\to \phi_1\sta\sL_{\tau'}$.
Note that the invertible sheaf
$\sL_{\cX}\defeq f_{\cX}^\ast\sO(1)$
on $\cC_{\cX}$ induces a tautological morphism $\cX\to\cfD$.

We let $\pi: \cC\to\cD$ with $\sL$ on $\cC$ be the universal curve and line bundle of
$\cD$; we introduce an auxiliary invertible sheaf $\sP=\sL^{\otimes(-5)}\otimes\omega_{\cC/\cD}$.
We define the direct image stack
\beq\label{dir-C}
C(\pi\lsta(\sL^{\oplus 5}\oplus  \sP))
\eeq
to be the category whose objects in $C(\pi\lsta (\sL^{\oplus 4}\oplus \sP))(S)$
consists $(\cC_\tau,\sL_\tau, u_i,\psi)$, where $\tau\in \cD(S)$ with $\pi_\tau: \cC_\tau\to S$ and $\sL_\tau$ on $\cC_\tau$ its
associated families, $(u_1,\cdots, u_5)\in \Gamma(\pi_{\tau\ast}\sL_\tau^{\oplus 5})$ and $\psi\in \Gamma(\pi_{\tau\ast}\sP_\tau)$.
An arrow from $(\cC_\tau,\sL_\tau, u_i,\psi)$ to $(\cC_{\tau'},\sL_{\tau'}, u_i',\psi')$ (over the same $S$)
consists of an arrow $\tau\to \tau'$ in $\cD(S)$ plus the
identity of $(u_i,\psi)$ with the pullback of $(u'_i,\psi')$ under the mentioned arrow $\tau\to \tau'$.
By construction, $C(\pi\lsta(\sL^{\oplus 5}\oplus  \sP))$ is a stack over $\cD$.

For simplicity, in this paper we abbreviate $\cY=\barM_1(\Pf,d)^p$, with $g=1$ and $d$ implicitly understood.
Like before, we let
$$(f_\cY, \pi_\cY): \cC_\cY\lra \Pf\times\cY,\quad \psi_\cY\in\Gamma(\cC_\cY, \sP_\cY)\and
\sP_\cY=f_\cY\sta\sO(-5)\otimes \omega_{\cC_\cY/\cY}
$$
be the universal family of $\cY$.
We denote $\sL_\cY=f_\cY\sta\sO(1)$. After fixing a homogeneous coordinates $[z_1,\cdots, z_5]$ of $\Pf$,
the morphism $f_\cY$ is given by $(u_{\cY, i})$, $u_{\cY,i}=f_\cY\sta z_i$.
The data $(\cC_\cY, \sL_\cY, u_{\cY,i}, \psi_\cY)$
induces an open embedding $\barM_1(\Pf,d)^p\to C(\pi\lsta(\sL^{\oplus 5}\oplus  \sP))$.
We let $p$ be the composite
$$p:\cY= \barM_1(\Pf,d)^p\sub C(\pi\lsta(\sL^{\oplus 5}\oplus  \sP))\lra \cD.
$$

After working out the obstruction theory of $C(\pi\lsta(\sL^{\oplus 5}\oplus  \sP))$ relative to $\cD$
(cf. \cite[Prop 3.1]{CL}), we obtain
a perfect relative obstruction theory (of $\cY\to \cfD$)
\beq\label{y-o}
\phi_{\cY/\cfD}:(E_{\cY/\cD})\dual 
\lra L_{\cY/\cfD}^\bullet  ,\quad E_{\cY/\cfD}\defeq   R^\bullet \pi_{\cY\ast}(\sL_\cY^{\oplus 5}\oplus \sP_\cY).
\eeq
In the same spirit, a relative perfect obstruction theory
of $\cX\to\cD$ (cf. \cite[Prop 2.5 and 2.7]{CL}) is (denoting $\sL_\cX=f_\cX\sta\sO(1)$):
\beq\label{x-o}\phi_{\cX/\cfD}: ( E_{\cX/\cD})\dual  \lra L^\bullet_{\cX/\cfD},\quad  E_{\cX/\cD}\defeq
R^\bullet\pi_{\cX\ast} \sL_{\cX}^{\oplus 5}.
\eeq

According to the convention, we call the cohomology sheaf
$$\Ob_{\cY/\cD}\defeq  H^1(E_{\cY/\cfD})=R^1 \pi_{\cY\ast}(\sL_\cY^{\oplus 5}\oplus \sP_\cY)
$$
the relative obstruction sheaf of $\phi_{\cY/\cfD}$.


\vsp

Since $\cY$ is non-proper, we use Kiem and the second named author's cosection localized virtual
class to construct its GW-invariants.
In \cite{CL}, the authors have constructed a cosection of $\Ob_{\cY/\cfD}$; namely, a homomorphism
\beq\label{cos}
\sigma: \Ob_{\cY/\cfD}\lra \sO_\cY,
\eeq
based on a choice of
a quintic polynomial, say $\bw=x_1^5+\cdots+x_5^5$. It was verified in the same paper that this cosection
lifted to a cosection $\bar\sigma: \Ob_\cY\to\sO_\cY$
of the (absolute) obstruction sheaf $\Ob_\cY$, where $\Ob_\cY$ is defined by
the exact sequence
$$p\sta\Omega_\cfD\dual\lra \Ob_{\cY/\cfD}\lra \Ob_\cY\lra 0.
$$
It was also verified that the degeneracy locus $D(\sigma)$ of $\sigma$, which is the locus where
$\sigma$ is not surjective, is the closed subset
\beq\label{emb}
D(\si)=\barM_1(Q,d)\sub \barM_1(\Pf,d)^p=\cY.
\eeq
Here $Q=(\bw=0)\sub\Pf$ is the smooth Calabi-Yau quintic defined by the
vanishing of the quintic polynomial $\bw=x_1^5+\cdots+x_5^5$ used to construct the cosection $\sigma$;
$\barM_1(Q,d)$ is the moduli of stable morphisms to $Q$, and the embedding is via
the tautological embedding $\barM_1(Q,d)\sub \barM_1(\Pf,d)$ composed with the embedding
$\barM_1(\Pf,d)\sub\cY=\barM_1(\Pf,d)^p$ defined by assigning  $\psi=0$.

The cosection $\sigma$ induces a morphism of bundle-stack (see \cite{BF} for bundle stacks)
\beq\label{sigb} \si: h^1/h^0(E_{\cY/\cfD})\lra \sO_\cY
\eeq
that is surjective over $\cU=\cY-D(\si)$. (By abuse of notation, we use the same $\si$;
also, we use $\sO_\cY$ to denote the rank one trivial line bundle on $\cY$.)  We let
$$h^1/h^0(E_{\cY/\cfD})_{\si}=\bl h^1/h^0(E_{\cY/\cfD})\times_{\cY}D(\si)\br \cup \ker\{\si|_{\cU}:
h^1/h^0(E_{\cY/\cfD})|_{\cU}\to \sO_\cU\},
$$
endowed with reduced stack structure.

Applying \cite{KL} on cosection localized 
virtual class, we know that the intrinsic normal cone (cycle)
$[\bC_{\cY/\cD}]\in Z\lsta h^1/h^0(E_{\cY/\cfD})$ lies in
\beq\label{C-co}
[\bC_{\cY/\cD}]\in Z\lsta h^1/h^0(E_{\cY/\cfD})_{\si};
\eeq
applying the localized Gysin map
$$0^!_{\sigma,\mathrm{loc}}: A\lsta h^1/h^0(E_{\cY/\cfD})_{\sigma}\lra A_{\ast-n} D(\sigma),
$$
where $-n=\rank E_{\cY/\cfD}$, we obtain a localized virtual class
\beq\label{l-vir}
[\cY]\virt\lloc\defeq 0^!_{\si,\loc}[\bC_{\cY/\cD}]\in A_0 D(\si)=A_0\barM_1(Q,d).
\eeq
We define its degree to be the GW-invariants of stable morphisms to $\Pf$ with fields:
$$N_1(d)^p_\Pf=\deg \, [\cY]\virt\lloc.
$$
This is well-defined since $\barM_1(Q,d)$ is proper.


\begin{theo}[\cite{CL}] \label{theorem} Let $N_1(d)_Q$ be the GW-invariants of genus one degree $d$
stable morphisms to $Q$, then we have
$$N_1(d)^p_{\Pf}=(-1)^{5d}N_1(d)_Q.
$$
\end{theo}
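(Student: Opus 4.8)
The statement labeled Theorem \ref{theorem} (and its twin Theorem \ref{thm1.2}) is quoted from \cite{CL}, so the task is to explain how such a comparison is proved rather than to reprove the entire machinery of \cite{CL}. The plan below sketches the argument.

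\medskip

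The plan is to compare the cosection-localized virtual class $[\cY]\virt\lloc\in A_0\barM_1(Q,d)$ with the ordinary virtual class $[\barM_1(Q,d)]\virt$ by a local-to-global degeneration of the cosection, following the strategy of \cite{KL} together with the explicit form of $\sigma$. First I would recall that over the degeneracy locus $D(\si)=\barM_1(Q,d)$ the obstruction sheaf $\Ob_{\cY/\cfD}=R^1\pi_{\cY\ast}(\sL_\cY^{\oplus 5}\oplus\sP_\cY)$ carries, besides the cosection $\sigma$, the ``vanishing'' information coming from the fact that a stable map with $p$-field supported on $Q$ must have $p$-field zero; one makes this precise by writing, along $\barM_1(Q,d)$, a splitting of the complex $E_{\cY/\cfD}$ into the part $R^\bullet\pi_\ast\sL_\cY^{\oplus 5}$ governing deformations inside $\Pf$ and the part $R^\bullet\pi_\ast\sP_\cY$ governing the $p$-field, and showing that the cosection $\sigma$ is, up to the tangent-normal bundle sequence of $Q\subset\Pf$, essentially the pairing of the $p$-field direction with the differential $d\bw$ of the defining quintic. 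Concretely, since $Q=(\bw=0)$ is smooth, $d\bw$ trivializes the conormal bundle $\sO_Q(-5)=N_{Q/\Pf}^\vee$, and $\sigma$ becomes the induced isomorphism between the ``normal'' obstructions $R^1\pi_\ast(u^\ast N_{Q/\Pf})$ and the $p$-field obstructions $R^1\pi_\ast(u^\ast\sO_Q(-5)\otimes\omega_C)$ after Serre duality; this is the heart of why the localized class lives on $\barM_1(Q,d)$ and why it should agree, up to sign, with the intrinsic virtual class there.

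\medskip

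Next I would execute the comparison itself. One forms the deformation-to-the-normal-cone of the zero section inside the bundle stack $h^1/h^0(E_{\cY/\cfD})$ and uses the cosection $\sigma$ to apply the localized Gysin map $0^!_{\si,\loc}$; by \cite[Thm~5.0.1]{KL} (or its statement as reproduced in the excerpt) the result $0^!_{\si,\loc}[\bC_{\cY/\cfD}]$ can be computed on the degeneracy locus. There one uses the identification of $E_{\cY/\cfD}|_{\barM_1(Q,d)}$ with a cone over $\barM_1(Q,d)$: the obstruction theory of $\cY$ restricted to $\barM_1(Q,d)$ differs from the obstruction theory of $\barM_1(Q,d)$ (as a moduli of stable maps to $Q$, relative to $\cfD$) precisely by the ``extra'' factor $R^\bullet\pi_\ast(u^\ast N_{Q/\Pf})[1]\oplus R^\bullet\pi_\ast\sP_\cY$, and the cosection $\sigma$ kernel cuts this factor down to exactly the obstruction theory of $\barM_1(Q,d)$. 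Running the functoriality of virtual pullback (in the style of \cite{Costello}, \cite{KL}) through this identification yields $[\cY]\virt\lloc=\pm[\barM_1(Q,d)]\virt$.

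\medskip

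Finally, the sign. The factor $(-1)^{5d}$ arises from the comparison of orientations: passing from $R^\bullet\pi_\ast(u^\ast N_{Q/\Pf})$ to $R^\bullet\pi_\ast\sP_\cY=R^\bullet\pi_\ast(u^\ast\sO_Q(-5)\otimes\omega_C)$ via Serre duality introduces a twist by $(-1)^{\chi}$ where $\chi=\deg(u^\ast\sO_Q(-5))+(\text{rank corrections})$, and for a genus one degree $d$ map one computes $\deg u^\ast\sO_\Pf(-5)=-5d$, so $(-1)^{-5d}=(-1)^{5d}$; tracking this through the localized Gysin formalism is a bookkeeping exercise once the geometric identification above is in place. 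I expect the main obstacle to be exactly this geometric identification — showing rigorously that, restricted to $\barM_1(Q,d)$, the kernel cone-stack $h^1/h^0(E_{\cY/\cfD})_\si$ is canonically the bundle stack $h^1/h^0(E_{\barM_1(Q,d)/\cfD})$ and that the intrinsic normal cone $[\bC_{\cY/\cfD}]$ maps to $[\bC_{\barM_1(Q,d)/\cfD}]$ under this identification — since this requires a careful analysis of the $p$-field deformations along the (possibly very singular, non-reduced) locus $\barM_1(Q,d)\subset\cY$, rather than merely at its generic point.
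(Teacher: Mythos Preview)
The paper does not prove this theorem at all; it is stated as a citation from \cite{CL} and used as a black box. You correctly recognized this in your opening note, so there is nothing to compare against in the present paper.

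Your sketch is a plausible outline of how one might approach such a comparison, but it does not match the actual argument in \cite{CL}. There the proof proceeds not by a direct identification of obstruction theories on the degeneracy locus, but by a family/deformation argument: one builds a $\GG_m$-equivariant family over $\Ao$ interpolating between the moduli of stable maps with $p$-fields and (a twisted version of) the moduli of stable maps to $Q$, equips it with a family of cosections, and invokes the deformation invariance of cosection-localized virtual classes. The sign $(-1)^{5d}$ then appears as the Euler class of a rank-$5d$ bundle (coming from $R^0\pi_\ast$ of $\sL^{\otimes 5}$ restricted to the rational tails, in the genus-one case more precisely from the comparison of the two obstruction bundles), not directly from a Serre-duality orientation twist as you suggest. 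Your proposed route---matching $h^1/h^0(E_{\cY/\cfD})_\sigma$ with $h^1/h^0(E_{\barM_1(Q,d)/\cfD})$ and pushing the intrinsic normal cone through---is morally in the right direction but, as you yourself flag, the hard step of tracking the cone through the (singular, non-reduced) degeneracy locus is exactly what the family argument in \cite{CL} is designed to circumvent.
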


We remark that this Theorem holds for all genus $g$. For our purpose, we only state in the case $g=1$.

\vsp

We will use the modular blow-up of $\cX=\barM(\Pf,d)$ to study $N_1(d)^p_\Pf$.
The version we use is that worked out by Hu and the second named author \cite{HL1},
following the original construction of Vakil-Zinger's modular blow-up of the primary component of
$\cX$ \cite{VZ}.
\vsp

Let $\cfM$ be the Artin stack of weighted genus one curves. (A weighted
nodal curve is a nodal curve with its irreducible components assigned non-negative weights; the sum of
the weights of all irreducible components is called the total weight.)
Replacing $\sL$ (of the universal family $(\cC,\sL)$ of $\cD$) by its degrees along irreducible components
of fibers of $\cC\to\cD$, we obtain an induced morphism $\cD\to\cfM$. Let
\beq\label{X-Mw} \cX\lra \cfM,\quad \cY\lra \cfM
\eeq
be the composites of $\cX,\,\cY\to\cD$ with $\cD\to\cfM$.

Let $\ticfM$ be modular blow up of $\cfM$
described in \cite{HL1}. (Since we do not need the explicit form of this blow-up in this paper, we will be
content to describe the consequence of this blow-up.)
We define the modular blow-up of $\cX$ and $\cY$ to be
$$\ti\cX=\cX\times_\cfM {\ticfM},\quad
\ti\cY=\cY\times_\cfM {\ticfM},\and \ti\cD=\cD\times_{\cfM}\ticfM.
$$
We denote $\cC_\tcy=\cC_\cY\times_\cY \tcY$ and let $f_\tcy:\cC_\tcy\to\Pf$ be the composition
of $f_\cY$ with $\cC_\tcy\to \cC_\cY$. We call $(f_\tcy,\pi_\tcy):\cC_\tcy\to\Pf\times\tcy$
the tautological family of $\tcy$.

Let $\zeta: \ti \cY\to\cY$ be the projection. Since $\ti\cY$ is derived from $\cY$ by a base change, we endow
the relative obstruction of $\ti\cY\to\ti\cD$ the pullback of that of $\cY\to\cD$:
\beq\label{ty-o}
\phi_{\ti\cY/\ti\cD}: (E_{\ti\cY/\ti\cD})\dual\lra L\bul_{\ti\cY/\ti\cD},\quad E_{\ti\cY/\ti\cD}=\zeta\sta E_{\cY/\cD}.
\eeq

 Thus, $\Ob_{\ti\cY/\ti\cD}=\zeta\sta\Ob_{\cY/\cD}$;  the cosection $\si$ pullbacks to a cosection
\beq\label{ti-si}
\ti\si=\zeta\sta\si: \Ob_{\ti\cY/\ti\cD}\to\sO_{\ti\cY},
\eeq
and the degeneracy locus
$$D(\ti\si)=D(\si)\times_\cY\ti\cY=\barM_1(Q,d)\times_{\cfM}\ticfM,
$$
which is proper. We define $h^1/h^0(E_{\ti\cY/\ti\cD})_{\ti\si}$ parallel to that defined after \eqref{sigb}. Then by \eqref{C-co}, we have
$$[\bC_{\ti\cY/\ti\cD}]\in Z\lsta h^1/h^0(E_{\ti\cY/\ti\cD})_{\ti\si}.
$$

We define $[\cpx]\virt\lloc=0^!_{\ti\si,\loc}[\bC_{\tcy/\ti\cD}] \in A_0 D(\ti\si)$ to be the cosection localized virtual class.

\begin{prop} \label{equal}
We have identity
$$\deg [\cpx]\virt\lloc=\deg [\cY]\virt\lloc=(-1)^{5d}N_1(d)_Q.
$$
\end{prop}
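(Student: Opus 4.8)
The plan is to show that the cosection-localized virtual class is preserved under the modular blow-up $\zeta: \ti\cY\to\cY$, i.e. that $\deg [\cpx]\virt\lloc=\deg [\cY]\virt\lloc$; the second equality is exactly Theorem \ref{theorem}. The essential point is that $\ti\cY\to\cY$ is a proper birational morphism (it is the base change of the modular blow-up $\ticfM\to\cfM$, which is proper birational), and that by construction the relative perfect obstruction theory of $\ti\cY/\ti\cD$ and the cosection $\ti\si$ are pulled back from those of $\cY/\cD$ via $\zeta$ (see \eqref{ty-o}, \eqref{ti-si}). Thus we are in the setting of comparing cosection-localized virtual classes along a proper birational (more generally, a birational modification fitting into a Cartesian-type square) morphism, where the obstruction data on the source is the pull-back of that on the target. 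The key step will be to invoke a cosection-localized refinement of Costello's pushforward theorem for virtual classes under such maps.

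Here is the order of steps I would carry out. First, observe that $\zeta:\ti\cY\to\cY$ factors through a commutative square over $\zeta_\cD:\ti\cD\to\cD$, and that $\ti\cD\to\cD$ is itself proper birational (again as a base change of $\ticfM\to\cfM$). Second, I would record that because $E_{\ti\cY/\ti\cD}=\zeta\sta E_{\cY/\cD}$, the natural map of intrinsic normal cones identifies $[\bC_{\ti\cY/\ti\cD}]$ with the pull-back, in the appropriate Gysin sense, of $[\bC_{\cY/\cD}]$ — more precisely, the cones are compatible under the induced map $h^1/h^0(E_{\ti\cY/\ti\cD})\to\zeta\sta h^1/h^0(E_{\cY/\cD})$. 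Third, since $\ti\si=\zeta\sta\si$, the cone $[\bC_{\ti\cY/\ti\cD}]$ lands in the $\ti\si$-localized sub-cone-stack $h^1/h^0(E_{\ti\cY/\ti\cD})_{\ti\si}=\zeta\sta\bigl(h^1/h^0(E_{\cY/\cD})_\si\bigr)$, and $D(\ti\si)=D(\si)\times_\cY\ti\cY$ is proper over $D(\si)$, with the restriction $D(\ti\si)\to D(\si)$ again proper birational. Fourth, I would apply the cosection-localized version of \cite[Thm 5.0.1]{Costello} — the same statement invoked for \eqref{blowup-vir} — to the morphism $\ti\cY\to\cY$ together with the compatible obstruction theories and cosections, concluding that the proper pushforward of $0^!_{\ti\si,\loc}[\bC_{\ti\cY/\ti\cD}]$ to $A_0 D(\si)$ equals $0^!_{\si,\loc}[\bC_{\cY/\cD}]$. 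Taking degrees (legitimate because $D(\si)=\barM_1(Q,d)$ is proper) yields $\deg[\cpx]\virt\lloc=\deg[\cY]\virt\lloc$, and then Theorem \ref{theorem} gives the value $(-1)^{5d}N_1(d)_Q$.

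The main obstacle is the fourth step: one needs the version of Costello's comparison theorem that is compatible with cosection localization, applied to a map that is not itself of the simplest form (it is a fiber product with a blow-up of the base Artin stack $\cfM$, not literally a blow-up of $\cY$). I would handle this by checking that the hypotheses of the cosection-localized Costello theorem — essentially that $\ti\cY\to\cY$ has degree one onto its image in the relevant virtual sense, that the obstruction theories are pulled back, and that the degeneracy loci match up properly — all follow from the base-change construction of $\ticfM\to\cfM$ and the fact that $\cfM$, $\cD$ are smooth Artin stacks so that the modular blow-ups are honest birational modifications along smooth-ish centers. A secondary technical point to verify is that the localized Gysin map commutes with the proper pushforward along $D(\ti\si)\to D(\si)$; this is standard functoriality of refined Gysin maps once the cone compatibility in steps two and three is in place. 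All of these are of the same nature as the compatibilities already used in deriving \eqref{blowup-vir}, so no genuinely new machinery beyond what the paper already cites is required.
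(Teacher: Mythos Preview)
Your proposal is correct and matches the paper's own (first) proof: the second equality is Theorem \ref{theorem}, and the first follows from the cosection-localized version of \cite[Thm 5.0.1]{Costello}, using that $\ti\cD\to\cD$ is a birational morphism of smooth Artin stacks and that the obstruction theory and cosection on $\ti\cY$ are pulled back from $\cY$. The paper also sketches an alternative route via $\deg[\ti\cY]\virt\lloc=\deg[\ti\cX]\virt=\deg[\cX]\virt$, but skips all details as routine.
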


\begin{proof} We offer two proofs. The first is to use Theorem \ref{theorem}
and to prove $\deg\,[\ti\cY]\virt\lloc=\deg\, [\cY]\virt\lloc$;
the later is proved using the cosection localization version of \cite[Thm 5.0.1]{Costello},
and that $\ti\cD\to\cD$ is a birational morphism between two smooth Artin stacks.

The second proof is to show that $\deg\,[\ti\cY]\virt\lloc=\deg\, [\ti\cX]\virt$, which can be proved
parallel to the proof of $\deg\,[\cY]\virt\lloc=\deg\, [\cX]\virt$ worked out in \cite{CL}, and with
$\deg\,[\ti\cX]\virt=\deg\, [\cX]\virt$ using \cite[Thm 5.0.1]{Costello}.

Since both proofs are repetition of the known proofs with routine modifications, we skip the details.
\end{proof}

\section{The decomposition of cones}

Following \cite{HL1}, we know that the blown-up $\ti \cX$ is a union of smooth Deligne-Mumford stacks: one is the proper transform of
$\cX\lell\sub\cX$, which we denote by $\ti\cX\lell$; the others are indexed by partitions $\mu$ of $d$,
denoted by $\ti\cX_\mu$.
Geometrically, a generic element of $\ti\cX\lell$ is a stable morphism whose domain is smooth; a generic element of
$\ti\cX_\mu$ of $\mu=(d_1,\cdots,d_\ell)$ is a stable morphism whose domain is $\ell$
rational curves attached to a smooth elliptic curve so that the stable morphism is constant along the elliptic curve and has degree $d_i$ along the $i$-th
attached rational curve.

The corresponding stack $\ti\cY$ has similar structure. First, we have the induced projection
$$ \fp :\ti\cY\lra \ti\cX.
$$
We let
$\ti\cY_\alpha=\ti \cY\times_{\ti\cX}\ti\cX\lalp$, where $\alpha=\text{pri}$ or $\mu\vdash d$;
we denote $\ti\cX\lgst=\cup_{\mu\vdash d}\ti\cX_\mu\sub\tcX$ and $\ti\cY\lgst= \cup_{\mu\vdash d}\ti\cY_\mu$ ($=\tcy\times_{\tcX}\tcX\lgst$).
Thus 
\beq\label{normal}
\ti\cX=\ti\cX\lell\cup \ti\cX\lgst\and
\ti\cY=\ti\cY\lell\cup\ti\cY\lgst.
\eeq

We group the property of this decomposition as follows.

\begin{prop}\label{coordinate}
For any closed $y\in \ti\cY$, we can find an \'etale neighborhood $\ti X\to \tcX$ of $x=\fp(y)\in\ti\cX$,
an embedding $\ti X$
into a smooth affine scheme $Z$, and an embedding $\ti Y:=\ti X\times_{\ti\cX} \ti\cY\to Z':=Z\times \Ao$, such that
\begin{enumerate}
\item let $p_Z: Z'=Z\times\Ao\to Z$ be the first projection; then $\ti Y\to \ti X$ commutes with $p_Z: Z'\to Z$; 

\item there are regular functions $z_1,\ldots, z_4$
and $w_\mu\in\Gamma(\sO_{Z})$,  $\mu\vdash d$, such that all subschemes $(z_i=0)$, and $(w_\mu=0)\sub Z$  are
smooth; further, let $w=\prod_\mu w_\mu$, then $(z_1\cdots z_4\cdot w=0)\sub Z$ has normal crossing singularities;

\item there is a smooth morphism $Z\to \cfM$ so that the composite $\ti Y\to Z\to \cfM$ is identical to the composite
$\ti Y\to \tcy\to \cfM$; further, locally the functions $w_\mu$ are pullbacks of functions on $\cfM$;

\item let $t\in \Gamma(\sO_{Z'})$ be the pull back of the standard coordinate function of
$\Ao$ via $p_{\Ao}: Z'\to \Ao$;
then as subschemes of $Z'$,  $\ti Y=(w\cdot z_{1\le i\le 4}=w\cdot t=0)$\footnote{We
use $w\cdot z_{1\le i\le 4}=0$ to mean $w\cdot z_1=\cdots=w\cdot z_4=0$.};
$\ti Y\lell=(z_{1\le i\le 4}=t=0)$; $\ti Y_\mu=(w_\mu=0)$, and $\ti Y\lgst=(w=0)$;

\item identify $Z\cong Z'\cap(t=0)$; then as subschemes of $Z$, $\ti X= \ti Y\cap (t=0)$; $\ti X\lell=\ti Y\lell$, and $\ti X_\mu=
\ti Y_\mu\cap (t=0)$;
\end{enumerate}
\end{prop}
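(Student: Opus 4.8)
The plan is to reduce everything to an explicit local study of the modular blow-up $\ticfM$ of the Artin stack $\cfM$ of weighted genus one curves, and then transport that structure across the (smooth, resp.\ base-change) morphisms $\cX\to\cfM$, $\cY\to\cD\to\cfM$. First I would recall from \cite{HL1} (see also \cite{VZ, Z, HL}) the local picture of $\ticfM\to\cfM$: near a point of $\ticfM$ lying over a weighted genus one curve with a ``core'' elliptic component carrying $\ell$ rational tails of weights $d_1,\dots,d_\ell$, the stack $\cfM$ is smooth and the blow-up is cut out by a single product of smooth coordinate functions indexed by the partitions $\mu\vdash d$ that refine the given tail configuration; these functions pull back from $\cfM$, which is the assertion behind items (2)--(3). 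Concretely this gives, \'etale-locally, a smooth scheme $M$, functions $w_\mu\in\Gamma(\sO_M)$ with each $(w_\mu=0)$ smooth and $\prod_\mu w_\mu=0$ normal crossing, and a description of the strata of $\ticfM$ as $(w_\mu=0)$ and $(w=0)$ with $w=\prod_\mu w_\mu$.

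Next I would pull this back to $\cX$. Since $\cX\to\cfM$ is smooth (it is the morphism $\cX\to\cfD\to\cfM$ from \eqref{X-Mw}), an \'etale neighborhood $\ti X\to\tcX$ of $x$ embeds into a smooth affine $Z$ with a smooth morphism $Z\to\cfM$ factoring the composite, giving item (3) for $\cX$ and the functions $w_\mu, z_1,\dots,z_4\in\Gamma(\sO_Z)$; the $z_i$ are the extra coordinates coming from the smooth fibers of $Z\to\cfM$, chosen so that $z_1\cdots z_4\cdot w=0$ is normal crossing and $\ti X\lell=(z_{1\le i\le 4}=0)$, $\ti X_\mu=(w_\mu=0)$ — this is exactly the Vakil--Zinger/Hu--Li local equation for the modular blow-up of $\barM_1(\Pf,d)$, which I would quote rather than rederive. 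Then I turn to $\cY=\barM_1(\Pf,d)^p$: by the direct-image-cone description of \cite{CL} recalled in Section~2, $\cY\to\cX$ is, fiberwise over $\cX$, the total space of $R^0\pi_{\cX\ast}(f_\cX\sta\sO(-5)\otimes\omega)$ twisted by the obstruction, and on the blow-up the analysis of \cite{HL1, HL} shows this contributes exactly one additional parameter $t$ (the ``smoothing parameter of the $p$-field along the core''), so $\ti Y\hookrightarrow Z'=Z\times\Ao$ with $t$ the coordinate on $\Ao$, and the defining equations acquire the extra relation $w\cdot t=0$ while $\ti Y\lell$ gains $t=0$; this is item (4), and item (1) is the tautology that forgetting the $p$-field is forgetting $t$. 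Item (5) is then just restriction to $(t=0)$: setting $t=0$ recovers $\ti X$ from $\ti Y$, and intersecting each stratum equation with $(t=0)$ recovers $\ti X\lell=\ti Y\lell$ (already inside $t=0$) and $\ti X_\mu=\ti Y_\mu\cap(t=0)$.

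The main obstacle is establishing item (4) — that the local equations of $\ti\cY$ inside $Z\times\Ao$ are precisely $w\cdot z_i=0$ and $w\cdot t=0$, with $\ti Y\lell=(z_i=t=0)$ — because this requires knowing how the extra $p$-field direction $t$ interacts with the exceptional-divisor functions $w_\mu$ after the modular blow-up. For this I would rely on the explicit local defining equations of the modular blow-up worked out in \cite{Z} and \cite{HL} (and for $\cY$, the extension of that analysis carrying the $p$-field): the point is that the $p$-field section, being a section of $\sL^{\otimes(-5)}\otimes\omega$, vanishes to the appropriate order along the tail components, so that on the component $\ti\cY_\mu=(w_\mu=0)$ the $p$-field contributes no new coordinate (it is forced to be supported on the core), whence $t$ is ``tied to $w$'' and we get $w\cdot t=0$ rather than an independent equation. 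Once this local normal form is in hand, a routine check that the listed subschemes are the claimed strata — using that $\ti\cX=\ti\cX\lell\cup\ti\cX\lgst$ from \eqref{normal} and $\ti\cX_\mu=\ti\cX\times_\cfM(w_\mu=0)$ — finishes items (2), (4), (5), and the normal-crossing claim in (2) is inherited from the corresponding statement in \cite{HL1}.
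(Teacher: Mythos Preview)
Your proposal has a factual error and a genuine gap that together undermine the strategy.

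First, the claim that $\cX\to\cfM$ is smooth is false. If it were, then $\tcX=\cX\times_{\cfM}\ticfM$ would be smooth over the smooth stack $\ticfM$, hence smooth; but $\tcX$ is the union $\tcX\lpri\cup\bigcup_\mu\tcX_\mu$ with normal-crossing singularities along $\tcX\lpri\cap\tcX\lgst$, which is exactly what the equations $w\cdot z_i=0$ record. So you cannot obtain the $z_i$ as ``extra coordinates from the smooth fibers of $Z\to\cfM$''; the $z_i$ encode the genuine failure of smoothness of $\cX\to\cfM$ at the ghost locus. Quoting the Hu--Li local equations for $\tcX$ is legitimate, but your account of where they come from is incorrect, and item (3) (the smooth $Z\to\cfM$) refers to the ambient $Z$, not to $\ti X$.

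Second, and more seriously, your argument for the extra equation $w\cdot t=0$ is only a heuristic. The paper's proof pins down the mechanism: deforming a point of $\cY$ amounts to deforming $(C,L,u_1,\dots,u_5,\psi)$ with $\psi\in H^0(\sP)$, so one needs an explicit local resolution of $R^\bullet p_{S\ast}(\sL_S^{\oplus 5}\oplus\sP_S)$, not just of $R^\bullet p_{S\ast}\sL_S^{\oplus 5}$. The key computation is that, by Serre duality, in the basic case $\cC_{s_0}=E\cup R$ one has
\[
R^\bullet p_{S\ast}\bigl(\sL_S^{\otimes(-5)}\otimes\omega_{\cC_S/S}\bigr)\ \cong_{q.i.}\ [\sO_S\xrightarrow{\ \times\xi\ }\sO_S]\ \oplus\ [0\to\sO_S^{\oplus 5r}],
\]
i.e.\ the \emph{same} singular summand $[\sO_S\xrightarrow{\times\xi}\sO_S]$ that appears for $R^\bullet p_{S\ast}\sL_S$, plus a shift of a locally free sheaf. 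This is precisely why the \cite{HL1} derivation of the local equations carries over verbatim with one additional coordinate $t$ satisfying $w\cdot t=0$. Your intuition that ``the $p$-field is supported on the core'' gestures at the right phenomenon, but without this Serre-duality resolution you have no way to conclude that the new direction contributes a single equation of the same shape $w\cdot t=0$ rather than something more complicated.
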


\begin{proof}
Recall that the proof of the structure result of $\ti \cX$ carried out in \cite{HL1} is by deriving
the local defining equation of $\cX$, which was originally derived by Zinger using analytic method.
The derivation of such defining equation relies on solving the following resolution problem:
Let $S$ be a scheme and $p_S:\cC_S\to S$ be a flat family of nodal arithmetic genus one
curves together with an invertible sheaf $\sL_S$ on $\cC_S$ so that its restriction to
closed fibers $\cC_s$, $s\in S$, are generated by
global sections; one then finds an explicit complex of locally free sheaves of $\sO_S$-modules that is
quasi-isomorphic to $R^\bullet p_{S\ast} \sL_S$.

Once such explicit complex is found, using that deforming a stable map $u: C\to \Pf$
amounts to deforming the data $(C,L, u_1,\cdots, u_5)$ (up to equivalence) of a pair of a
line bundle on a curve and sections $u_i\in \Gamma(C,L)$, we obtain the local defining equation
of $\cX$ (cf. \cite{HL1}).

Deriving the resolution property of $R\bul p_{S\ast}\sL_S$ is an elementary problem in algebraic
geometry. It was done in \cite{HL1} as follows: Suppose
$S$ as before is a scheme; $s_0\in S$ is a closed point so that $\cC_{s_0}=E\cup R$ is a union of
a smooth elliptic curve and a smooth rational curve, and $\sL_S|_E\cong \sO_E$ and $\sL_S|_R$ is ample.
Let $\xi\in\Gamma(\sO_S)$ be such that $(\xi=0)$ is the locus where $\cC_S$ is nodal.
Then after shrinking $S$ if necessary, we have quasi-isomorphism of derived objects
$$R^\bullet p_{S\ast} \sL_S\cong_{q.i.} [\sO_S\mapright{\times \xi}\sO_S]\oplus [\sO_S^{\oplus r}\lra 0],
$$
where $r=\deg\sL|_{\cC_{s_0}}$.

We now derive the local defining equation of $\cY$. Since deforming a closed point
$([u,C],\psi)\in \cY$ is equivalent to deforming $(C,L,u_i,\psi)$ up to equivalence, where
$(C,L,u_i)$ is as before and $\psi\in\Gamma(C, L^{\otimes(-5)}\otimes\omega_C)$,
we need to have, for the family $(\cC_S\to S, \sL_S)$ as before, the resolution property of
$R^\bullet p_{S\ast} (\sL_S^{\oplus 5}\oplus \sL_S^{\otimes(-5)}\otimes\omega_{\cC_S/S})$.
Like the case for $R^\bullet p_{S\ast} \sL_S$, this case can be derived from the simple
case where $\cC_{s_0}=E\cup R$ and $\sL_S|_E\cong \sO_E$ as before.
In this case, by \cite{HL1},
$$R^\bullet p_{S\ast} \sL_S^{\otimes 5}\cong_{q.i.} [\sO_S\mapright{\times \xi} \sO_S]\oplus [\sO_S^{\oplus 5r}\lra 0].
$$
By Serre duality, we obtain
$$R^\bullet p_{S\ast} \sL_S^{\otimes (-5)}\otimes \omega_{\cC_S/S}
\cong_{q.i.} [\sO_S\mapright{\times \xi} \sO_S]\oplus [0\lra  \sO_S^{\oplus 5r}].
$$
Since it is a direct sum of $ [\sO_S\mapright{\times \xi} \sO_S]$ with a shift of a locally free sheaf,
the proof of \cite{HL1} can be adopted line by line to the case of $\cY$ to prove the statement of
the Lemma. Since the details are parallel, we will omit the details here.
\end{proof}

\begin{coro}\label{3.2}
All $\ti\cX\lalp$ and $\ti\cY\lalp$, where $\alpha=\mathrm{pri}$ or $\mu\vdash d$, are smooth;
the  tautological projection $\ti\cY\lalp\to\ti\cX\lalp$ is an isomorphism
(resp. an $\Ao$-bundle) for $\alpha=\mathrm{pri}$ (resp. $\alpha=\mu\vdash d$).
\end{coro}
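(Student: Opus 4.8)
The plan is to derive everything from the explicit local model furnished by Proposition \ref{coordinate}. Fix a closed point $y\in\ti\cY$, and let $x=\fp(y)$; choose the étale neighborhood $\ti X\to\tcX$, the embeddings $\ti X\hookrightarrow Z$ and $\ti Y=\ti X\times_{\ti\cX}\ti\cY\hookrightarrow Z'=Z\times\Ao$, together with the functions $z_1,\dots,z_4$, $w_\mu$ ($\mu\vdash d$), $w=\prod_\mu w_\mu$ and $t$, exactly as in that Proposition. Since smoothness and the isomorphism/$\Ao$-bundle statements are all étale-local on $\tcX$, it suffices to verify them for $\ti X\lalp$ and $\ti Y\lalp$ after this base change; and by parts (4)--(5) of the Proposition we have the explicit descriptions $\ti Y\lell=(z_1=\dots=z_4=t=0)\sub Z'$, $\ti Y_\mu=(w_\mu=0)\sub Z'$, $\ti X\lell=\ti Y\lell$, and $\ti X_\mu=\ti Y_\mu\cap(t=0)\sub Z$.

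First I would establish smoothness. For $\ti X\lell=\ti Y\lell$: by part (2), each hypersurface $(z_i=0)\sub Z$ is smooth, and by part (2) the divisor $(z_1\cdots z_4 w=0)$ has normal crossings, so in particular the four divisors $(z_i=0)$ meet transversally; intersecting further with the smooth extra factor $(t=0)\sub Z'$ (a graph-type equation, automatically transverse to everything pulled back from $Z$), we see $\ti Y\lell$ is a transverse intersection of smooth hypersurfaces in the smooth scheme $Z'$, hence smooth. For $\ti X_\mu$ and $\ti Y_\mu$: by part (2) the hypersurface $(w_\mu=0)\sub Z$ is smooth, so $\ti Y_\mu=(w_\mu=0)\sub Z'$ is smooth (product with $\Ao$), and $\ti X_\mu=(w_\mu=0)\cap(t=0)\sub Z$ is smooth since $(t=0)$ is a smooth hypersurface of $Z'$ meeting $(w_\mu=0)\times\Ao$ transversally. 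This proves the first assertion of the Corollary.

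Next I would identify the tautological projection $\ti\cY\lalp\to\ti\cX\lalp$. By part (1) of the Proposition, $\ti Y\to\ti X$ is the restriction of $p_Z:Z'=Z\times\Ao\to Z$, and $\ti Y\lalp\to\ti X\lalp$ is its restriction to the corresponding pieces. In the case $\alpha=\mathrm{pri}$: on $\ti Y\lell$ we have $t=0$ by part (4), so the $\Ao$-coordinate is identically zero, and the projection $\ti Y\lell\to\ti X\lell$ is the restriction of $Z\times\{0\}\to Z$, which is an isomorphism onto $\ti X\lell=\ti Y\lell$; hence $\ti\cY\lell\to\ti\cX\lell$ is an isomorphism. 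In the case $\alpha=\mu\vdash d$: here $\ti Y_\mu=(w_\mu=0)\sub Z\times\Ao$ and $\ti X_\mu=(w_\mu=0)\cap(t=0)\sub Z$, so $\ti Y_\mu=\ti X_\mu\times\Ao$ with $p_Z$ identified with the first projection; thus $\ti Y_\mu\to\ti X_\mu$ is an $\Ao$-bundle (indeed trivial in this chart). Since being an isomorphism, resp. an $\Ao$-bundle, is étale-local on the base, gluing the étale charts gives the global statements $\ti\cY\lell\cong\ti\cX\lell$ and $\ti\cY_\mu\to\ti\cX_\mu$ an $\Ao$-bundle, completing the proof.

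The only genuinely delicate point is the gluing of the $\Ao$-bundle structure across charts: a priori the $\Ao$-coordinate $t$ is defined only in each local model, so one must check that the transition data for the various local $\ti Y_\mu\cong\ti X_\mu\times\Ao$ are affine-linear in $t$. This is automatic from the construction: $t$ is the pullback of the standard coordinate on the $\Ao$-factor appearing in the modular blow-up data, and the arrows in $\cY$ act on the $p$-field $\psi$ (equivalently on this $\Ao$-coordinate) $\sO$-linearly by the very definition of $\barM_1(\Pf,d)^p$; alternatively, $\ti\cY\to\ti\cX$ is globally the direct-image cone $C(\pi_\ast\sP)$ restricted over $\ti\cX\lgst$, where $\pi_\ast\sP$ is a line bundle by the Serre-duality computation in the proof of Proposition \ref{coordinate}, so $\ti\cY\lgst\to\ti\cX\lgst$ is a line bundle and in particular an $\Ao$-bundle. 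I would phrase the proof using this global direct-image-cone description to avoid the chart-by-chart gluing altogether, citing the local model of Proposition \ref{coordinate} only for the smoothness statements.
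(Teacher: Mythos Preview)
Your proposal is correct and follows exactly the route the paper intends: the Corollary is stated without proof precisely because it is meant to be read off from the local equations in Proposition \ref{coordinate}, and you have spelled this out accurately. Your closing remark that $\ti\cY\lgst\to\ti\cX\lgst$ is globally the total space of the line bundle $L=\pi_{\tcX\lgst\ast}\sP_{\tcX\lgst}$ is also the paper's own viewpoint (stated explicitly in Section 5), so invoking it to bypass the chart-by-chart gluing is entirely in line with the paper.
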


\begin{lemm}\label{cone1}
Let $\bC_{\tcy/\ti\cD}\sub h^1/h^0(E_{\tcy/\ti\cD})$ be the intrinsic normal cone embedded via the obstruction
theory $\phi_{\tcy/\ti\cD}$ of $\tcy\to\ti\cD$.
\begin{enumerate}
\item Away from $\tcy\lgst$, it 
is the zero section of $h^1/h^0(E_{\tcy/\ti\cD})|_{\tcy-\tcy\lgst}$.
\item Away from $\tcy\lpri$, it is a rank two subbundle stack of $h^1/h^0(E_{\tcy/\ti\cD})|_{\tcy-\tcy\lpri}$.
\end{enumerate}
\end{lemm}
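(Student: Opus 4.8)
The plan is to reduce to the explicit local model of Proposition~\ref{coordinate} and then compute the intrinsic normal cone directly from the local defining equations of $\tcy$ together with the explicit resolution of $E_{\tcy/\ti\cD}=R^\bullet\pi_\ast(\sL_\tcy^{\oplus 5}\oplus\sP_\tcy)$ produced in the proof of that proposition. Concretely, I would fix a closed point $y\in\tcy$ and pass to the \'etale chart $\ti Y=\ti X\times_{\tcX}\tcy\hookrightarrow Z'=Z\times\Ao$, so that $\ti Y=(w\,z_{1\le i\le 4}=w\,t=0)$, $\ti Y\lpri=(z_{1\le i\le 4}=t=0)$, $\ti Y_\mu=(w_\mu=0)$, $\ti Y\lgst=(w=0)$ with $w=\prod_\mu w_\mu$. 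Since $\phi_{\tcy/\ti\cD}$ is the pullback of $\phi_{\cY/\cfD}$, the resolutions of $R^\bullet\pi_\ast\sL_\tcy^{\oplus 5}$ and of $R^\bullet\pi_\ast\sP_\tcy=(R^\bullet\pi_\ast\sL_\tcy^{\otimes 5})^\vee[-1]$ written down in the proof of Proposition~\ref{coordinate} show that, after shrinking $Z'$, the complex $E_{\tcy/\ti\cD}|_{\ti Y}$ is quasi\-isomorphic to a two\-term complex of free $\sO_{\ti Y}$\-modules whose differential is ``multiplication by the function cutting out $\tcy\lgst$'' on the cokernel coordinates contributed by $R^\bullet\pi_\ast\sL_\tcy^{\oplus 5}$ and by $R^\bullet\pi_\ast\sP_\tcy$, and zero on the remaining (locally free) coordinates; equivalently $\tcy$ embeds locally, over $\ti\cD$, into the total space $M$ of the degree\-zero part of this resolution as the zero locus of the degree\-one part, and $\bC_{\tcy/\ti\cD}|_{\ti Y}$ is the normal cone $C_{\ti Y/M}$ sitting inside the obstruction bundle $h^1(E_{\tcy/\ti\cD})$.

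For assertion (1): on $\tcy-\tcy\lgst$ the function $w$ is invertible, so $\tcy\to\ti\cD$ is a smooth morphism there. Indeed its fibres over $\ti\cD$ are open subsets of $H^0(C,L^{\oplus5})$, which has locally constant rank $5d$ with $H^1(C,L^{\oplus5})=0$ (so $\pi_\ast\sL^{\oplus5}$ is locally free and base\-change compatible), while $H^0(C,\sP)=0$; equivalently, the local equations reduce to $z_1=\cdots=z_4=t=0$, which cut out exactly the $\ti\cD$\-relative fibre directions. For a smooth morphism the intrinsic normal cone is the zero section of $h^1/h^0(E_{\tcy/\ti\cD})$, which is precisely (1). (One can also see this in the local picture: the surjection $T_{M/\ti\cD}\twoheadrightarrow\langle dz_1,\dots,dz_4,dt\rangle^\vee$ absorbs all of $C_{\ti Y/M}$ into $h^0$.)

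For assertion (2): on $\tcy-\tcy\lpri$ we are at a generic point of some $\tcy_\mu$, where $w=w_\mu\cdot(\text{unit})$ vanishes and at least one of $z_1,\dots,z_4,t$ is a unit, so the ideal $(w z_1,\dots,w z_4,w t)$ collapses to $(w_\mu)$ and $\tcy=(w_\mu=0)$ is smooth of codimension one inside $Z'$. The point is that $w_\mu$ is pulled back from $\cfM$, hence constant along the fibres of $M\to\ti\cD$, so its differential \emph{survives} in $h^1(E_{\tcy/\ti\cD})$ rather than being absorbed into $h^0$; and since the resolution's differential vanishes identically over $\tcy\lgst$, the bundle stack $h^1/h^0(E_{\tcy/\ti\cD})|_{\tcy_\mu}$ is, modulo the classifying stack of the locally free $h^0$, the locally free sheaf $R^1\pi_\ast\sL^{\oplus5}\oplus R^1\pi_\ast\sP$ restricted to $\tcy_\mu$. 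Reading off the normal cone $C_{\ti Y/M}$ from the explicit local equations (the form $w\cdot(\text{linear})$ of the ideal, together with the shape of the resolution) identifies $\bC_{\tcy/\ti\cD}$ over $\tcy_\mu$ with the sub\-bundle stack of rank two spanned by the one obstruction direction coming from $R^1\pi_\ast\sL^{\oplus5}$ — Zinger's ``smoothing\-versus\-branching'' obstruction along the elliptic node — together with the one coming from $R^1\pi_\ast\sP$, the $p$\-field obstruction. Carrying out this last computation — extracting from the proof of Proposition~\ref{coordinate} the precise coordinate form of the resolution's differential and of the defining equations near a ghost point, and checking that exactly these two obstruction coordinates are activated over $\tcy_\mu$ — is the step I expect to be the main obstacle, since the bookkeeping inherited from the Vakil--Zinger/Hu--Li local model is delicate; everything else is either a smoothness statement or a formal manipulation of cones.

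Finally, all the constructions above are \'etale\-local and canonical, so the two local descriptions glue, over $\tcy-\tcy\lgst$ and over $\tcy-\tcy\lpri$ respectively, to the stated global assertions.
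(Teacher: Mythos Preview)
Your argument for part (1) is essentially the same as the paper's: on $\tcy-\tcy\lgst$ the morphism $p:\tcy\to\ti\cD$ is smooth (the paper phrases this by noting that $f_\tcy$ has positive degree along the minimal genus--one subcurves, so both $\tcy-\tcy\lgst\to\cfM$ and $\tcy-\tcy\lgst\to\ti\cD$ are smooth), hence the intrinsic normal cone is the zero section.

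For part (2), however, your route diverges from the paper's and contains a genuine gap. The paper argues purely geometrically: over $\tcy-\tcy\lpri$ the image of $p$ in $\ti\cD$ is a locally closed local complete intersection substack of codimension \emph{two} --- one codimension from the nodal condition pulled back from $\cfM$ (your $w_\mu$), and a second codimension from the fact that the tautological line bundle $\sL_\tcy$ restricted to the minimal genus--one subcurve is trivial (degree zero and globally generated forces $\sL|_E\cong\sO_E$). Since $p|_{\tcy-\tcy\lpri}$ is smooth onto this image, the intrinsic normal cone is the pullback of the rank--two normal bundle, hence a rank--two subbundle stack. No local computation is needed.

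Your computation, by contrast, only recovers codimension one: the ideal $(wz_1,\dots,wz_4,wt)$ collapsing to $(w_\mu)$ in $Z'$ reflects the $\cfM$--direction alone, because $Z\to\cfM$ is smooth but $Z$ is \emph{not} presented as smooth over $\ti\cD$. The second normal direction --- the Picard direction in $\ti\cD$ over $\cfM$ --- is invisible in the chart of Proposition~\ref{coordinate} and you never account for it. Your attempt to supply the missing direction from ``$R^1\pi_\ast\sP$, the $p$--field obstruction'' is mistaken: the $p$--field $\psi$ is extra data over $\tcX$, and by Corollary~\ref{3.2} the projection $\tcy\lgst\to\tcX\lgst$ is a smooth $\Ao$--bundle, so it contributes nothing to $\bC_{\tcy/\ti\cD}$. (Indeed, Section~7 --- see diagram~\eqref{diag-3} and Lemma~\ref{7.2} --- shows that over $W_\circ$ the rank--two cone arises via the isomorphism $\alpha_1$ from $H^1((L^\bullet_{M_\circ/\cD})^\vee)$, both of whose directions land in $\ti V_1=R^1\pi_\ast\sL^{\oplus 5}$ under the obstruction map, not split between $\ti V_1$ and $\ti V_2$.) So your argument as written does not establish rank two; the missing ingredient is precisely the line--bundle--triviality condition on the elliptic component, which the paper identifies directly.
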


\begin{proof}
We consider the morphism
$$p: \tcy\lra\ti\cD.
$$
By the previous Proposition and Corollary, we know that both $\tcy-\tcy\lgst$
and the tautological morphism $\tcy-\tcy\lgst\to \cfM$ are smooth. 
Since the morphism
$f_\tcy: \cC_\tcy\to \Pf$ has positive degree along the (minimal) genus one subcurves of the fibers of $\cC_\tcy|_{\tcy-\tcy\lgst}\to \tcy-\tcy\lgst$,
one checks that $p|_{\tcy-\tcy\lgst}: \tcy-\tcy\lgst\to \ti\cD$ is also smooth. Therefore, $\bC_{\tcy/\ti\cD}|_{\tcy-\tcy\lgst}$ is
the zero section of $h^1/h^0(E_{\tcy/\ti\cD})|_{\tcy-\tcy\lgst}$.

For $\tcy-\tcy\lpri$, by the previous Proposition and Corollary, the tautological $\tcy-\tcy\lpri\to \cfM$ has its image a locally closed
Cartier divisor in $\cfM$, and $\tcy-\tcy\lpri$ is smooth onto this image.
Further, because the restriction of the tautological line bundle
$\sL_\tcy$ to the minimal genus one subcurves of the fibers of $\cC_\tcy|_{\tcy-\tcy\lpri}\to \tcy-\tcy\lpri$ are
generated by global sections and have degree zero, they are trivial line bundles. Thus the image
$p(\tcy-\tcy\lpri)\sub \ti\cD$ is a locally closed local complete intersection codimension two substack,
and $p|_{\tcy-\tcy\lpri}$ is smooth onto this image. Thus
$\bC_{\tcy/\ti\cD}|_{\tcy-\tcy\lpri}$ is a rank two bundle stack over $\tcy-\tcy\lpri$.
\end{proof}


By this Lemma, we know that to each $\alpha=\text{pri}$ of $\mu\vdash d$, the cone $\bC_{\tcy/\ti\cD}$ contains
a unique irreducible component (of multiplicity one) that dominates $\tcy_\alpha$. For the irreducible component
that dominates $\tcy\lpri$, it is
the closure of the zero section of $h^1/h^0(E_{\tcy/\ti\cD})|_{\tcy-\tcy\lgst}$ (in $h^1/h^0(E_{\tcy/\ti\cD})|_{\tcy\lpri}$);
we denote it by $\bC\lpri$. For $\mu\vdash d$, it is the closure of a rank two subbundle stack in
$h^1/h^0(E_{\tcy/\ti\cD})|_{\tcy_\mu}$ described in Lemma \ref{cone1}; we denote this subcone by $\bC_\mu'$.

There are possibly other irreducible components of $\bC_{\tcy/\ti\cD}$ lying over $\ti\Delta=\tcy\lpri\cap \tcy\lgst$.
We group them into $\sum_{\mu\vdash d} \bC_\mu\dpri$ such that $\bC_\mu\dpri$ lies over $\ti\Delta_\mu=\ti\Delta\times_\tcy\tcy_\mu$.
We comment that this grouping is not unique.
We write $\bC_\mu=\bC_\mu\pri\cup\bC_\mu\dpri$.

Therefore
\beq\label{C-dec}
[\bC_{\tcy/\ti\cD}]=[\bC\lpri]+\sum_{\mu\vdash d} [\bC_\mu]\in Z\lsta h^1/h^0(E_{\tcy/\ti\cD}).
\eeq
Consequently,  (denoting $[\bcgst]=\sum_{\mu\vdash d}[\bC_\mu]$,)
\beq\label{split-0}
[\ti\cY]\virt\lloc=0_{\ti\si,\loc}^![\bcpri]+0_{\ti\si,\loc}^![\bcgst]=0_{\ti\si,\loc}^![\bcpri]+\sum_{\mu\vdash d}0^!_{\ti\si,\loc}[\bC_\mu].
\eeq

Let $N_1(d)_Q\ured$ be the reduced genus one GW-invariants of the quintic $Q$ introduced in
\cite{LZ} (cf. Introduction).

\begin{prop}\label{prop1}
We have
$\deg 0_{\ti\si,\loc}^![\bcpri]=(-1)^{5d} N_1(d)_Q\ured$.
\end{prop}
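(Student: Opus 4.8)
The plan is to evaluate the localized Gysin class $0^!_{\ti\si,\loc}[\bcpri]$ by exploiting the concrete local picture of Proposition \ref{coordinate}. First I would recall that, by Lemma \ref{cone1}(1) and the discussion following it, over the open dense substack $\tcy\lpri-\ti\Delta$ the cone $\bcpri$ is exactly the zero section of $h^1/h^0(E_{\tcy/\ti\cD})$; hence, as a cycle, $[\bcpri]$ is the closure of this zero section inside $h^1/h^0(E_{\tcy/\ti\cD})|_{\tcy\lpri}$, and since $\tcy\lpri$ is smooth with $\tcy\lpri\to\ti\cD$ having a well-behaved relative obstruction theory, the class $[\bcpri]$ coincides with the image of the fundamental class $[\tcy\lpri]$ under the zero section. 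The key input from Corollary \ref{3.2} is that $\fp: \tcy\lpri\to\tcX\lpri$ is an \emph{isomorphism}; therefore the obstruction bundle-stack restricted to $\tcy\lpri$ matches, after this identification, the one coming from $\cX\lpri$'s stable-map geometry (together with the $p$-field factor), and its $h^1$ is the vector bundle $\ti\pi_{\mathrm{pri}\ast}\ti f\lpri\sta\sO_\Pf(5)$ pulled back from $\tcX\lpri$, up to the contribution of the $p$-fields which is the dual $R^1$ that Serre-dualizes to the locally free $R^0$ of the $p$-field part.

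Next I would compute $0^!_{\ti\si,\loc}$ applied to this zero-section cycle. The cosection $\ti\si$ is, by construction (see \eqref{cos}, \eqref{ti-si}), built from the quintic $\bw$; on $\tcy\lpri$ the $p$-field part of the obstruction vanishes generically (the $R^1$ of $\sP$ is supported over $\tcy\lgst$ by the Serre-duality computation in the proof of Proposition \ref{coordinate}), so the cosection localized Gysin map on $[\bcpri]$ reduces to the ordinary Euler class of the quotient bundle $\ker(\ti\si)$ on the non-degeneracy locus, capped against $[\tcy\lpri]$ and pushed to $D(\ti\si)$. Using the identification $\tcy\lpri\cong\tcX\lpri$ and the fact that, there, $\ker(\ti\si)$ is precisely the bundle $\ti\pi_{\mathrm{pri}\ast}\ti f\lpri\sta\sO_\Pf(5)$ whose top Chern class defines $N_1(d)_Q\ured$ via \eqref{def-red}, one gets $\deg 0^!_{\ti\si,\loc}[\bcpri]=\pm\int_{[\tcX\lpri]}c_{\text{top}}(\ti\pi_{\mathrm{pri}\ast}\ti f\lpri\sta\sO_\Pf(5))$. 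The sign is tracked through the Serre-duality pairing between the $\sL_\tcy^{\oplus 5}$ obstruction and the $\sP_\tcy$ obstruction exactly as in the proof of Theorem \ref{theorem} in \cite{CL}, producing the factor $(-1)^{5d}$, which gives the asserted formula $\deg 0^!_{\ti\si,\loc}[\bcpri]=(-1)^{5d}N_1(d)_Q\ured$.

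The main obstacle I expect is making rigorous the claim that, over $\tcy\lpri$, the cosection-localized Gysin class of the zero-section cone equals the honest Euler class of $\ker(\ti\si)$, i.e.\ reconciling the Kiem--Li localized intersection theory with the na\"ive top-Chern-class computation. This requires checking that the degeneracy locus $D(\ti\si)\cap\tcy\lpri=\barM_1(Q,d)_{\text{pri}}\times_\cfM\ticfM$ sits inside $\tcy\lpri$ as the zero locus of a section of $\ker(\ti\si)$ transverse in the virtual sense, and that no extra contributions leak in from $\ti\Delta$. I would handle this by a deformation-to-the-normal-cone / excess-intersection argument localized near $\tcy\lpri$, using Proposition \ref{coordinate} to give explicit local equations: there, $\ti Y\lell=(z_{1\le i\le 4}=t=0)$ is cut out by the very coordinates that the obstruction sheaf is built from, so the cone $\bcpri$ is visibly the zero section and the cosection is a linear form one can diagonalize, reducing the computation to the standard fact (as in \cite{KL}) that $0^!_{\si,\loc}$ of a zero section equals the Euler class of $\ker(\si)$. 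The remaining bookkeeping — pushforward along $\tcX\lpri\to\barM_1(\Pf,d)_{\text{pri}}$ and comparison of $\ti\pi_{\mathrm{pri}\ast}\ti f\lpri\sta\sO_\Pf(5)$ with its counterpart for a different choice of resolution $\varphi$ — is routine and follows from \cite[Thm 5.0.1]{Costello} together with the projection formula, so I would only sketch it.
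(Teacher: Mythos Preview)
Your proposal has a genuine gap at its central step. You claim that $0^!_{\ti\si,\loc}$ applied to the zero-section cone equals the Euler class of $\ker(\ti\si)$, and that this kernel is the bundle $\ti\pi_{\mathrm{pri}\ast}\ti f\lpri\sta\sO_\Pf(5)$. Both assertions are incorrect. First, there is no such ``standard fact'' in \cite{KL}: when the base is proper, the cosection-localized Gysin map agrees with the \emph{ordinary} Gysin map, so $0^!_{\ti\si,\loc}$ of the zero section gives the Euler class of the full obstruction bundle, not of $\ker(\ti\si)$. (Indeed, if $\ti\si$ were everywhere surjective the Euler class of the full bundle would vanish, consistent with $D(\ti\si)=\emptyset$; the Euler class of $\ker(\ti\si)$ has the wrong rank and need not vanish.) Second, on $\tcy\lpri$ the $p$-field $\psi$ is identically zero (this is what $\tcy\lpri\cong\tcX\lpri$ says), so the cosection reduces to $\dot\psi\mapsto\dot\psi\cdot\bw(u)$; its kernel is $(\pi\lsta f\sta\sO(5)/\langle\bw(u)\rangle)\dual$, of rank $5d-1$, not $\pi\lsta f\sta\sO(5)$.

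You also have the two summands of the obstruction sheaf swapped. Over $\tcy\lpri^\circ=\tcy\lpri-\ti\Delta$ it is $R^1\pi\lsta\sL^{\oplus 5}$ that vanishes, while $R^1\pi\lsta\sP\cong(\pi\lsta f\sta\sO(5))\dual$ is locally free of rank $5d$ --- precisely the opposite of what you wrote. The paper's argument exploits exactly this: since $\tcy\lpri$ is proper, one may drop the cosection entirely (Corollary~\ref{4.2}); the cone $\bcpri$ is identified with $\bH_1\times_{\tcy\lpri}0_{\bH_2}$ where $0_{\bH_2}$ is the closure of the zero section of the $\sP$-obstruction (Lemma~\ref{main-Cone}); and a general lemma (Lemma~\ref{HL-euler}) shows that the ordinary Gysin map of such a closure-of-zero-section in $h^1/h^0(R\dual[-1])$ returns $e(H^0(R)\dual)$. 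Applied with $R=R\bul\pi_{\tcy\lpri\ast}f_{\tcy\lpri}\sta\sO(5)$ this yields $e\bl(\pi_{\tcy\lpri\ast}f_{\tcy\lpri}\sta\sO(5))\dual\br=(-1)^{5d}N_1(d)_Q\ured$. The transversality and leakage worries you raise are thus red herrings: properness of $\tcy\lpri$ is the organizing observation, and no analysis of $D(\ti\si)$ or of $\ker(\ti\si)$ is needed.
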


Let $(d)$ be the partition of $d$ into a single part; i.e., the non-partition of $d$.

\begin{prop}\label{prop2}
For $\mu\ne (d)\vdash d$, we have
$\deg 0_{\ti\si,\loc}^![\bC_\mu]=0$. 
\end{prop}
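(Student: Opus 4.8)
The plan is to work locally on $\tcy$ using the explicit coordinates furnished by Proposition \ref{coordinate}, and to show that near the locus $\tcy_\mu$ (for $\mu\ne(d)$) the cosection $\ti\si$ already becomes surjective when restricted to the relevant piece of the cone, so that the localized Gysin map applied to $[\bC_\mu]$ is forced to vanish. Concretely, I would first recall from Corollary \ref{3.2} and Lemma \ref{cone1} that $\bC_\mu$ lies over $\tcy_\mu$, where $\tcy_\mu\to\tcX_\mu$ is an $\Ao$-bundle and $\tcX_\mu$ parametrizes stable maps whose domain is a smooth elliptic curve $E$ with $\ell=\ell(\mu)$ rational tails $R_1,\dots,R_\ell$, the map being constant on $E$ and of degree $d_i$ on $R_i$. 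The key point is that for $\mu\ne(d)$ one has $\ell\ge 2$, i.e. there are at least two rational tails, and the contact point $u(E)\in\Pf$ can be chosen independently of the maps on the individual tails.

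Next I would make explicit the cosection $\ti\si$ on the rank-two bundle stack that $\bC_\mu'$ is (the closure of). Following \cite{CL}, the cosection on $\Ob_{\tcy/\ti\cD}=R^1\pi_{\tcy\ast}(\sL_\tcy^{\oplus 5}\oplus\sP_\tcy)$ is built from $\bw=x_1^5+\dots+x_5^5$; restricted to the minimal genus one subcurve $E$ (where $\sL_\tcy$ is trivial and $u$ is the constant map to a point $q=u(E)\in\Pf$), the $H^1$ contribution from $\sP_\tcy$ is the one-dimensional space $H^1(E,\omega_E)\cong\CC$ (tensored with the fiber $u^\ast\sO(-5)|_q$), and the cosection on this summand is the linear functional $\psi\mapsto \sum_i \partial_i\bw(q)\cdot(\text{section data})$, evaluated against $\psi$. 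I would show that this functional is nonzero precisely when $q\notin Q$, and that on the stratum $\tcy_\mu$ the condition ``$\ti\si$ fails to be surjective on the cone" forces $q\in Q$; but then, using the $\ell\ge 2$ tails and the normal-crossing local equations $w_\mu$ of Proposition \ref{coordinate}(2)--(4), the remaining directions of $\bC_\mu$ (coming from smoothing nodes / the $H^1$ of $\sL_\tcy^{\oplus 5}$ along the tails) pair nontrivially with $\ti\si$, again via the derivatives $\partial_i\bw$. The upshot is that $\bC_\mu$ meets the locus $h^1/h^0(E_{\tcy/\ti\cD})_{\ti\si}$ — the union of $\ker\ti\si$ over $\cU$ with the fiber over $D(\ti\si)$ — only inside a proper closed substack over which $\ti\si$ is fiberwise nonzero on the cone, forcing $0^!_{\ti\si,\loc}[\bC_\mu]=0$ by the vanishing criterion of \cite{KL} for cones lying in the surjective locus of the cosection.

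A cleaner way to organize the same idea, which I would actually carry out, is to reduce to a localized-Gysin computation on an explicit affine chart: by Proposition \ref{coordinate}, $\ti Y_\mu=(w_\mu=0)\subset Z'$, the functions $w_\nu$ are pulled back from $\cfM$, and $\ti\si$ in these coordinates has a component that is (up to a unit) one of the fiber coordinates of $h^1/h^0(E_{\tcy/\ti\cD})$ times a function vanishing only on $D(\ti\si)\cap\tcy_\mu\subsetneq\tcy_\mu$; since $\bC_\mu$ is a cone over $\tcy_\mu$ whose generic fiber is a full rank-two bundle stack (not contained in the kernel of that component), the intersection $\bC_\mu\cap h^1/h^0(E_{\tcy/\ti\cD})_{\ti\si}$ is supported over the proper substack $D(\ti\si)\cap\tcy_\mu$, and the cosection-localized class $0^!_{\ti\si,\loc}[\bC_\mu]$ vanishes because one can retract $\bC_\mu$ linearly to its zero section within the surjective locus. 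I expect the \textbf{main obstacle} to be the bookkeeping that pins down which fiber coordinate of the cone the cosection is non-degenerate along when $q=u(E)\in Q$: one must use that $\ell\ge2$ distinct tails carry independent smoothing parameters and that, since $q\in Q$ but the generic quintic derivative $\partial_i\bw(q)\ne 0$, the pairing of $\ti\si$ with the node-smoothing directions of at least one tail is a nonzero linear form; handling the degenerate cases (all $\partial_i\bw(q)=0$ is excluded for generic $\bw$, but one should still exclude the finitely many bad $q$) is where the genericity of $\bw$ — already fixed in \eqref{si0} — gets used. Once that non-degeneracy is established, the vanishing is formal from \cite{KL}.
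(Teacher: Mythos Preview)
Your proposal has a genuine gap at its core. You claim that ``$\bC_\mu$ meets the locus $h^1/h^0(E_{\tcy/\ti\cD})_{\ti\si}$ only inside a proper closed substack'' and then appeal to a ``vanishing criterion of \cite{KL} for cones lying in the surjective locus of the cosection.'' But this is backwards: by \eqref{C-co} (which is exactly the basic result of \cite{KL}), the intrinsic normal cone $\bC_{\tcy/\ti\cD}$ is \emph{always entirely contained} in $h^1/h^0(E_{\tcy/\ti\cD})_{\ti\si}$. Hence each $\bC_\mu$, being a subcycle of $\bC_{\tcy/\ti\cD}$, is also entirely contained there. Over $\cU=\tcy-D(\ti\si)$ the cone lies in $\ker\ti\si$, so the cosection restricted to the cone is identically zero; there is nothing ``non-degenerate along a fiber coordinate of the cone'' to exploit. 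Correspondingly, there is no such vanishing criterion in \cite{KL}: the localized Gysin map $0^!_{\ti\si,\loc}$ is defined precisely on cycles in $h^1/h^0(E)_{\ti\si}$, and it has no reason to vanish merely because the base is non-proper or the cosection is generically surjective. Your attempt to pair $\ti\si$ with ``node-smoothing directions of at least one tail'' cannot succeed, because those directions, insofar as they appear in the cone, already sit in $\ker\ti\si$.

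The paper's approach is entirely different and does not analyze the cosection on the cone directly. Instead, Section~5 compactifies $W_\mu=\tcy_\mu$ (the total space of a line bundle $L$ over $M_\mu=\tcX_\mu$) to $\barW_\mu=\bP(L\oplus\sO_{M_\mu})$, extends the bundles $\ti V_i$ to $\barV_i$ and the cosection to $\bar\xi$, and shows (Lemma~5.1, Corollary~\ref{count}) that the \emph{degree} of $0^!_{\ti\si,\loc}[\bC_\mu]$ equals the degree of an ordinary Gysin map on the compactification. After splitting $0^!_{\barV_{1,\mu}}[R_\mu]=[P_{\mu,1}]+[P_{\mu,2}]$ according to \eqref{Zmu}, the vanishing in Lemma~\ref{6.4} is a pure \emph{dimension count}: one constructs a proper stack $\ti\cB_\mu$ with $\rho_\mu:\tcX_\mu\to\ti\cB_\mu$ and $\rho_\mu\sta\ti\sK_\mu\dual\cong V_{2,\mu}$, and computes $\dim\ti\cB_\mu=5d-2\ell+4$ (cf.~\eqref{dim-B}). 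For $\mu\ne(d)$ one has $\ell\ge 2$, hence $\dim\ti\cB_\mu\le 5d$, while $[P_{\mu,1}]\in A_{5d+1}$; the projection formula then forces the pushforward to vanish. So the role of ``$\ell\ge 2$'' is dimensional, not a non-degeneracy statement about $\ti\si$. You should abandon the cosection-surjectivity line and instead follow the compactification-plus-dimension strategy of Sections~5--6.
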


\begin{prop}\label{prop3}
We have
$\deg 0_{\ti\si,\loc}^![\bC_{(d)}]= \frac{(-1)^{5d}}{12} N_0(d)_Q$.
\end{prop}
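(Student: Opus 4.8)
The plan is to reduce $0^!_{\ti\si,\loc}[\bC_{(d)}]$ to a computation in a neighbourhood of $\tcy_{(d)}$, to factor the outcome as the genus zero hyperplane property \eqref{hyper} against a degree one integral over $\bcM_{1,1}$, and to evaluate the latter to $\tfrac{1}{12}$. First I would set up the local model. By Corollary~\ref{3.2} the projection $\fp\colon\tcy_{(d)}\to\tcX_{(d)}$ is an $\Ao$-bundle over the smooth stack $\tcX_{(d)}$, and by the proof of Proposition~\ref{coordinate} the tautological family over $\tcy_{(d)}$ has domain $C=E\cup R$ with $E$ a smooth genus one curve contracted by $f_\tcy$, attached at a single node to a genus zero degree $d$ subcurve $R$. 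Recording $(E,\mathrm{node})$ gives a morphism $\tcX_{(d)}\to\bcM_{1,1}$ and recording $(R,\mathrm{node},f_\tcy|_R)$ a morphism to the moduli of one-pointed genus zero degree $d$ stable maps; modulo the auxiliary blow-up data (an $\Ao$-bundle matter) these identify $\tcX_{(d)}$ with $\bcM_{1,1}\times\bcM_{0,1}(\Pf,d)$, the node carrying an evaluation map $\ev$ to $\Pf$. Over $\tcy_{(d)}$, using the explicit quasi-isomorphism for $R^\bullet\pi_{\tcy\ast}(\sL_\tcy^{\oplus 5}\oplus\sP_\tcy)$ from the proof of Proposition~\ref{coordinate}, the obstruction complex $E_{\tcy/\ti\cD}$ decomposes, along the normalization $C=E\cup R$, into a genus zero block supported on $R$ that reassembles $\EE_{0,d}$ (pulled back from $\bcM_{0,1}(\Pf,d)$), plus an elliptic block controlled by $H^\bullet(E,\sO_E)$ --- hence by the Hodge line bundle $\mathbb{H}$ and the cotangent line $\LL$ of $E$ at the marked point on $\bcM_{1,1}$ --- and by $\bw$ and its first and second derivatives at $q:=f_\tcy(E)=\ev(\mathrm{node})$; the cosection $\ti\si$ decomposes accordingly, its genus zero block being the genus zero $p$-field cosection of \cite{CL}.

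Next I would run the Gysin computation on $\bC_{(d)}'$, the rank two subbundle stack of $h^1/h^0(E_{\tcy/\ti\cD})$ given by Lemma~\ref{cone1}(2). Because $\tcy_{(d)}$ is smooth and $\bC_{(d)}'$ lies in the $\ti\si$-locus, $0^!_{\ti\si,\loc}[\bC_{(d)}']$ is an explicit localized characteristic class; using the presentation of $E_{\tcy/\ti\cD}$ recalled above and integrating along the $\Ao$-fibre of $\fp$, as in Sections four and six, it becomes an integral over $\bcM_{1,1}\times\bcM_{0,1}(\Pf,d)$ whose integrand is $c_{\mathrm{top}}(\EE_{0,d})$ (from the genus zero block) multiplied by a class built from $c_1(\mathbb{H})$ and $c_1(\LL)$ (from the elliptic block). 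The genus zero part of this integral evaluates to $(-1)^{5d}N_0(d)_Q$, by \eqref{hyper} and the standard comparison between $\bcM_{0,1}(\Pf,d)$ and $\bcM_0(\Pf,d)$ --- the sign being the one appearing in Theorem~\ref{theorem} --- while the elliptic part contributes the $\bcM_{1,1}$-integral $\tfrac{1}{12}=2\int_{\bcM_{1,1}}c_1(\mathbb{H})$, the coefficient $2$ (equivalently: the appearance of $c_1(\LL)=c_1(\mathbb{H})$ in addition to $c_1(\mathbb{H})$ itself) being exactly what separates the hyperplane constant $\tfrac{1}{12}$ from the Hodge integral $\int_{\bcM_{1,1}}c_1(\mathbb{H})=\tfrac{1}{24}$. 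Finally, $\deg 0^!_{\ti\si,\loc}[\bC_{(d)}'']=0$ by the dimension count of Proposition~\ref{prop2}, since $\bC_{(d)}''$ lies over the codimension two locus $\ti\Delta_{(d)}=\ti\Delta\times_\tcy\tcy_{(d)}$; combining these gives $\deg 0^!_{\ti\si,\loc}[\bC_{(d)}]=\tfrac{(-1)^{5d}}{12}N_0(d)_Q$.

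The main obstacle is the bookkeeping on the $\bcM_{1,1}$ factor in the second step: one must describe $\bC_{(d)}'$ and the restriction $\ti\si|_{\bC_{(d)}'}$ explicitly enough to see both that exactly $\EE_{0,d}$ separates from the genus zero block and that the residual elliptic contribution is precisely the class whose $\bcM_{1,1}$-integral is $\tfrac{1}{12}$. Concretely this requires tracking how the node smoothing deformation, the $p$-field deformation along $E$, and the Hessian of $\bw$ at $q$ distribute over the two coordinates of the rank two cone $\bC_{(d)}'$, and how the $\Ao$-bundle $\fp$ twists the elliptic block; once these are pinned down the numerical output $\tfrac{1}{12}$, the factor $N_0(d)_Q$, and the sign $(-1)^{5d}$ are all forced.
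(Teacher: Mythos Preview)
Your high-level picture is right --- the answer factors as a genus zero hyperplane integral against a number extracted from $\bcM_{1,1}$ --- but the proposal skips the one step that makes the computation possible, and mis-cites another.

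The central gap is the phrase ``integrating along the $\Ao$-fibre of $\fp$''. The cosection localized Gysin map $0^!_{\ti\si,\loc}$ does not admit a naive pushforward along the non-proper fibres of $\tcy_{(d)}\to\tcX_{(d)}$: the cosection $\ti\si$ depends genuinely on the fibre coordinate (the $p$-field $\psi$), so there is no projection formula available here. The paper deals with this by the compactification apparatus of Section~5: one passes to $\barW_{(d)}=\bP(L\oplus\sO)$ over $\tcX_{(d)}$, extends $\ti V_i$ to $\barV_i$ and $\ti\si$ to $\bar\xi$, and uses Lemma~\ref{5.7} (the weight-$(0,1)$ homogeneity of $C_{(d)}$) together with Lemma~\ref{compactify} and Corollary~\ref{count} to convert the localized Gysin map into an ordinary one on a proper base. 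Without this, your ``explicit localized characteristic class'' is not defined as a number.

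Second, your elliptic bookkeeping is schematic where the paper is concrete, and the two do not obviously agree. In the paper the constant in front of $N_0(d)_Q$ is obtained by evaluating the Euler class of the rank three bundle $j_h^\ast\barV_1/j_h^\ast\barS_\circ$ on the $\Po$-bundle $\barW_h=\bP_B(L_B\oplus\sO_B)$ over $B=\bcM_{1,1}\times\Po$ (Lemma~\ref{7.5} and the computation after \eqref{after4years}); the inputs are $N_{R/\Pf}$, the Hodge line, and the twist by $-D_\infty$, and the answer $-\tfrac{1}{12}$ emerges from three terms, none of which is ``$2\!\int_{\bcM_{1,1}}c_1(\mathbb H)$''. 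Your heuristic does not pin down why the degree $d$ drops out of the final constant, which in the paper happens through cancellations among the $[D_B]^3$, $[D_B]^2$, and $[D_B]\cdot F$ terms. Finally, the vanishing of the boundary piece is not ``by the dimension count of Proposition~\ref{prop2}'' (that Proposition concerns $\mu\ne(d)$); the correct input is Lemma~\ref{6.4}(1), which gives $\deg 0^!_{\barV_{2,\mu}}[P_{\mu,2}]=0$ for all $\mu$, including $\mu=(d)$.
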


These three Propositions, the identity \eqref{split-0} and Proposition \ref{equal} combined
give an algebraic proof of the hyperplane property
of genus one GW-invariants of quintics proved originally via analytic method in \cite{LZ, VZ, Z}.

\section{Contribution from the primary component}

Let  $(f_\tcy,\pi_\tcy): \cC_\tcy\to \Pf\times\tcy$ with $\psi_\tcy\in\Gamma(\cC_\tcy, \sP_\tcy)$
be the tautological family of $\tcy$, where $\sL_\tcy=f_\tcy\sta\sO(1)$ and $\sP_\tcy=\sL_\tcy^{\otimes(-5)}\otimes
\omega_{\cC_\tcy/\tcy}$. Recall that (the deformation complex of the relative obstruction theory)
$E_{\tcy/\ti\cD}=R\bul\pi_{\tcy\ast}(\sL_\tcy^{\oplus 5}\oplus \sP_\tcy)$.
We let
$$\bH_1=h^1/h^0\bl (R\bul\pi_{\tcy\ast} \sL_\tcy^{\oplus 5})|_{\tcy\lpri}\br
\and \bH_2=h^1/h^0\bl (R\bul\pi_{\tcy\ast} \sP_\tcy)|_{\tcy\lpri}\br.
$$
By the base change property of the $h^1/h^0$-construction, we have
$$h^1/h^0(E_{\tcy/\ti\cD})\times_\tcy\tcy\lpri=\bH_1\times_{\tcy\lpri}\bH_2.
$$

Now let $\tcy\lpri^{\circ}=\tcy\lpri-\ti\Delta$. where $\ti\Delta=\tcy\lgst\cap\tcy\lpri$. By its definition
and an easy vanishing argument,
we have $(R^1\pi_{\tcy\ast} \sL_\tcy^{\oplus 5})|_{\tcy\lpri^{\circ}}=0$,
and $(R^1\pi_{\tcy\ast} \sP_\tcy)|_{\tcy\lpri^{\circ}}\cong ((\pi_{\tcy\ast}f_\tcy\sta\sO(5))|_{\tcy\lpri^{\circ}})\dual$,
which is locally free of rank $5d$  over $\tcy\lpri^{\circ}$. We let $0_{\bH_2}\sub \bH_2$ be the closure of
the zero section of $\bH_2|_{\tcy\lpri^{\circ}}$ (in $\bH_2$).

\begin{lemm}\label{main-Cone}
Let $\iota: \bH_1\times_{\tcy\lpri}\bH_2\to h^1/h^0(E_{\tcy/\ti\cD})$
be the inclusion, then we have
$$[\bcpri]=\iota\lsta[\bH_1\times_{\tcy\lpri} 0_{\bH_2} ]\in Z\lsta h^1/h^0(E_{\tcy/\ti\cD}).
$$
\end{lemm}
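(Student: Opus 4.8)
The plan is to identify the cycle $[\bcpri]$ with the explicit cone $[\bH_1\times_{\tcy\lpri}0_{\bH_2}]$ by comparing the two on the open locus $\tcy\lpri\uo=\tcy\lpri-\ti\Delta$, where everything is transparent, and then arguing that both sides have no embedded or extra components supported over $\ti\Delta$. First I would recall, via Lemma~\ref{cone1}(1), that over $\tcy-\tcy\lgst$ the intrinsic normal cone $\bC_{\tcy/\ti\cD}$ is just the zero section of $h^1/h^0(E_{\tcy/\ti\cD})$; restricting to $\tcy\lpri\uo$ and using the base-change identity $h^1/h^0(E_{\tcy/\ti\cD})\times_\tcy\tcy\lpri=\bH_1\times_{\tcy\lpri}\bH_2$, this zero section is exactly $\bH_1\times_{\tcy\lpri}0_{\bH_2}$ restricted to $\tcy\lpri\uo$ — indeed the zero section of $\bH_2|_{\tcy\lpri\uo}$ sits inside the zero section of the whole bundle stack, and over $\tcy\lpri\uo$ we even have $\bH_1|_{\tcy\lpri\uo}$ is a genuine rank-$0$ (trivial) stack by the vanishing $(R^1\pi_{\tcy\ast}\sL_\tcy^{\oplus5})|_{\tcy\lpri\uo}=0$ quoted before the statement. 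So the two cycles agree, with multiplicity one, over the dense open $\tcy\lpri\uo\sub\tcy\lpri$.

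Next I would invoke the characterization, stated right after Lemma~\ref{cone1}, that $\bcpri$ is by \emph{definition} the unique irreducible component of $\bC_{\tcy/\ti\cD}$ dominating $\tcy\lpri$, taken with its reduced structure as the closure of that zero section; hence $\bcpri$ is irreducible, of multiplicity one, and equals the closure in $h^1/h^0(E_{\tcy/\ti\cD})|_{\tcy\lpri}$ of the zero section over $\tcy\lpri\uo$. On the other side, $\bH_1\times_{\tcy\lpri}0_{\bH_2}$ is by construction irreducible (it is a vector-bundle stack $\bH_1$ over the irreducible base $0_{\bH_2}$, which is itself the closure of a section over the irreducible $\tcy\lpri\uo$) and generically reduced, so it too is the closure of its restriction over $\tcy\lpri\uo$. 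Two irreducible, reduced, closed substacks of the same ambient bundle stack that coincide on a dense open are equal; pushing forward along $\iota$ and matching multiplicities then gives $[\bcpri]=\iota\lsta[\bH_1\times_{\tcy\lpri}0_{\bH_2}]$ as cycles.

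The one point requiring care — and the step I expect to be the main obstacle — is the \emph{dimension bookkeeping over $\ti\Delta$}: a priori the closure of the section over $\tcy\lpri\uo$ inside $\bH_2|_{\tcy\lpri}$ could fail to be flat over $\tcy\lpri$, and $0_{\bH_2}$ might pick up components over $\ti\Delta$ of dimension $\ge\dim\tcy\lpri$ that do not lie in $\bcpri$, or conversely $\bcpri$ could contain fibre directions over $\ti\Delta$ not seen in $\bH_1\times_{\tcy\lpri}0_{\bH_2}$. To rule this out I would use Proposition~\ref{coordinate} and Corollary~\ref{3.2}: $\tcy\lpri$ is smooth, $\ti\Delta=\tcy\lpri\cap\tcy\lgst$ is a Cartier divisor in it, and the local defining equations there show $R^1\pi_{\tcy\ast}\sP_\tcy$ has constant rank $5d$ along $\tcy\lpri$ after the blow-up (the jump in $h^1$ of $\sL_\tcy^{\oplus5}$ is exactly compensated by the cone structure recorded in Lemma~\ref{cone1}(2)), so $(\pi_{\tcy\ast}f_\tcy\sta\sO(5))$ is locally free of rank $5d$ over all of $\tcy\lpri$ and $0_{\bH_2}$ is in fact the \emph{honest} zero section of a vector bundle stack over $\tcy\lpri$ — in particular smooth over $\tcy\lpri$, with no extraneous components. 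Combined with the fact that $\bcpri$, being irreducible and dominating $\tcy\lpri$, has dimension $\dim\tcy\lpri+\rank\bH_1$ and maps into $0_{\bH_2}$ (its generic point lies in the zero section of $\bH_2$), a dimension count forces $\bcpri\sub\bH_1\times_{\tcy\lpri}0_{\bH_2}$, and equality of the two closures follows. I would then record the multiplicity-one statement and conclude.
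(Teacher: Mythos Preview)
Your approach is correct and matches the paper's: verify agreement on the dense open $\tcy\lpri^\circ$ via Lemma~\ref{cone1}(1), then conclude by irreducibility of both sides. The dimension bookkeeping in your final paragraph is unnecessary overcaution --- once both cycles are irreducible and agree on a dense open, equality is immediate, and the paper's proof accordingly occupies only three sentences.
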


\begin{proof}
First, over $\tcy\lpri^\circ$, $\bH_1$ is the zero bundle-stack. Thus by Lemma \ref{cone1} and the discussion afterwards,
the identity stated in the Lemma
holds when restricted
to $\tcy\lpri^{\circ}$. By their constructions, both $\bC\lpri$ and $\bH_1\times_{\tcy\lpri} 0_{\bH_2}$ are irreducible. Thus the identity holds.
\end{proof}

\begin{coro}\label{4.2}
We have $0^!_{\ti\sigma,\loc}([\bcpri])=0^!([0_{\bH_2}])$,
where the first is taken under the localized Gysin map of $A\lsta h^1/h^0(E_{\tcy/\ti\cD})_{\ti\si}$, and the second
is taken under the ordinary Gysin map of $\bH_2$.
\end{coro}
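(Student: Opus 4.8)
The plan is to feed the cone identity of Lemma~\ref{main-Cone} into the cosection-localized Gysin map, after observing that the Chang--Li cosection degenerates along $\tcy\lpri$ in a very controlled way. The key preliminary fact is that the universal $p$-field $\psi_\tcy$ vanishes identically on $\tcy\lpri$. Indeed the forgetful morphism $\fp\colon\tcy\to\tcX$ carries a canonical section $s_0\colon\tcX\to\tcy$, the one assigning the zero $p$-field; by Corollary~\ref{3.2} the restriction $\fp|_{\tcy\lpri}\colon\tcy\lpri\to\tcX\lpri$ is an isomorphism, so $s_0|_{\tcX\lpri}$, being a section of it, must equal $(\fp|_{\tcy\lpri})^{-1}$. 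Hence every point of $\tcy\lpri$ carries the zero $p$-field, i.e. $\psi_\tcy|_{\tcy\lpri}=0$.

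Next I would analyze $\ti\si$ over $\tcy\lpri$. Recall from \cite{CL} that $\si$ acts on $\Ob_{\tcy/\ti\cD}=R^1\pi_{\tcy\ast}(\sL_\tcy^{\oplus 5}\oplus\sP_\tcy)$ by $(\dot u_1,\dots,\dot u_5,\dot\psi)\mapsto\sum_i(\partial\bw/\partial u_i)(u)\,\psi\,\dot u_i+\bw(u)\,\dot\psi$; in particular its component on the summand $R^1\pi_{\tcy\ast}\sL_\tcy^{\oplus 5}$ is proportional to $\psi_\tcy$, hence vanishes on $\tcy\lpri$ by the previous paragraph. Using the base-change identity $h^1/h^0(E_{\tcy/\ti\cD})\times_\tcy\tcy\lpri=\bH_1\times_{\tcy\lpri}\bH_2$ recalled in Section~4, this shows that $\ti\si|_{\tcy\lpri}$ annihilates the $\bH_1$-factor, so in particular it vanishes on $\pr_2^{-1}(0_{\bH_2})=\bH_1\times_{\tcy\lpri}0_{\bH_2}$, where $\pr_2\colon\bH_1\times_{\tcy\lpri}\bH_2\to\bH_2$ denotes the projection. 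By Lemma~\ref{main-Cone} the cycle $[\bcpri]$ is the pushforward of $[\pr_2^{-1}(0_{\bH_2})]$, so it is supported in the zero-locus of $\ti\si$.

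It then remains to invoke the standard functorialities of the cosection-localized Gysin map of \cite{KL}. Compatibility of $0^!_{\ti\si,\loc}$ with closed restriction to $\tcy\lpri$ and with the pushforward $\iota\lsta$ reduces the left side of the asserted identity to $0^!_{\ti\si|_{\tcy\lpri},\loc}[\pr_2^{-1}(0_{\bH_2})]$. Since this cycle lies in the zero-locus of $\ti\si|_{\tcy\lpri}$, the refinement property of the localized Gysin map identifies its pushforward to $\tcy\lpri$ with the ordinary Gysin class of $[\pr_2^{-1}(0_{\bH_2})]$ for the zero section of the bundle stack $\bH_1\times_{\tcy\lpri}\bH_2\to\tcy\lpri$. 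Finally, because $\pr_2$ is a vector-bundle-stack projection and $\pr_2^{-1}(0_{\bH_2})=\pr_2\sta[0_{\bH_2}]$, the identities $0^!_{\bH_1}\circ\pr_2\sta=\mathrm{id}$ and ``Gysin of a direct sum is the composite of Gysins'' give that this class equals $0^!_{\bH_2}[0_{\bH_2}]$. Chaining these equalities yields $0^!_{\ti\si,\loc}([\bcpri])=0^!([0_{\bH_2}])$ after the evident pushforwards, which is all that is needed for the degree computation in Proposition~\ref{prop1}.

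The step I expect to require the most care is the second one: verifying, at the level of bundle stacks and not merely on points, that $\ti\si|_{\tcy\lpri}$ genuinely kills the $\bH_1$-direction. This rests on the explicit form of the cosection from \cite{CL} together with the identification $\tcy\lpri\cong\tcX\lpri$ of Corollary~\ref{3.2} (equivalently, on $\psi_\tcy|_{\tcy\lpri}=0$); granting it, the remainder is routine bookkeeping with $h^1/h^0$-stacks and the established functoriality of cosection-localized Gysin maps.
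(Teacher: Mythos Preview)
Your proof is correct, but it takes a considerably longer route than the paper's. The paper dispatches the corollary in two lines: since $\tcy\lpri$ is proper, the cosection-localized Gysin map agrees with the ordinary Gysin map over it (this is the general compatibility statement of \cite[Prop.~1.3]{KL}: for any cycle in $h^1/h^0(E)_{\ti\si}$, the pushforward of its localized Gysin class to the ambient stack coincides with the ordinary Gysin class); then $0^![\bH_1\times_{\tcy\lpri}0_{\bH_2}]=0^![0_{\bH_2}]$ is the elementary fact that the Gysin map of a fiber product of bundle stacks factors through either projection. No analysis of the cosection is needed.

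Your two opening paragraphs---establishing $\psi_\tcy|_{\tcy\lpri}=0$ via Corollary~\ref{3.2} and deducing that $\ti\si|_{\tcy\lpri}$ annihilates the $\bH_1$-direction---are correct and give a more explicit reason why $[\bcpri]$ sits inside $h^1/h^0(E_{\tcy/\ti\cD})_{\ti\si}$. But this membership is already guaranteed by the general cone-containment result of \cite{KL} (recorded in the paper as \eqref{C-co}), and the localized-versus-ordinary comparison you invoke in your third paragraph holds for \emph{any} cycle in $h^1/h^0(E)_{\ti\si}$, not only for those lying in the literal kernel of $\ti\si$. So the ``most careful'' step you flag at the end is in fact the step you can drop entirely. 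What your argument buys is an intrinsic explanation independent of properness for why the cycle behaves well; what the paper's argument buys is brevity.
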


\begin{proof}
Since $\ti\cY\lell$ is proper, and since the cosection localized Gysin map is the same
as the ordinary Gysin map over proper bases, the proposition follows from that $0^![\bH_1\times_{\tcy\lpri} 0_{\bH_2}]=0^![0_{\bH_2}]$,
where the first $0^!$ is taken in $\bH_1\times_{\tcy\lpri} \bH_2$ and
second $0^!$ is taken in $\bH_2$.
\end{proof}

To proceed, we prove a useful result.

\begin{lemm}\lab{HL-euler}
Let $R=[\sR_0\to \sR_1]$ be a complex of locally free sheaves on an integral Deligne-Mumford stack $M$ such that
$H^1(R)$ is a torsion sheaf on $M$ and the image sheaf of $\sR_0\to \sR_1$ is locally free.
Let $U\sub M$ be the complement of the support of $H^1(R)$, and let $\bB\sub h^1/h^0(R\dual[-1])$
be the closure of the zero section  of the vector bundle $h^1/h^0(R\dual[-1]|_U)= H^0(R|_U)\dual$. Then
$$0^![\bB]=e(H^0(R)\dual)\in A\lsta M.
$$
\end{lemm}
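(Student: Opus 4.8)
The plan is to reduce the computation of $0^![\bB]$ to an application of the Gysin/intersection formula for cones inside a bundle stack whose total cone is a vector subbundle, and then to identify the resulting class as the top Chern class $e(H^0(R)\dual)$. The key point is that $\bB$ is, by construction, the closure of the zero section of a subbundle of $h^1/h^0(R\dual[-1])$, so $\bB$ is itself a vector bundle (bundle stack) over $M$, and the operation $0^!$ on $[\bB]$ only sees the rank of this bundle.

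First I would unwind the $h^1/h^0$-construction. Writing $R=[\sR_0\to\sR_1]$ with $\sR_0\to\sR_1$ having locally free image $\sI$, the hypothesis $H^1(R)$ torsion means $\sI$ has the same rank as $\sR_1$ generically; over the open dense $U\sub M$ we get the short exact sequence $0\to H^0(R)\to\sR_0\to\sR_1\to 0$ (as $H^1(R|_U)=0$), so $h^1/h^0(R\dual[-1])|_U$ is the vector bundle with sheaf of sections $H^0(R|_U)\dual$. Over all of $M$, $h^1/h^0(R\dual[-1])=[\sR_1\dual/\sR_0\dual]$ is a bundle stack; $\bB$ is the closure in it of the zero section over $U$. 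Since $M$ is integral and the zero section over $U$ is irreducible of dimension $\dim M$, $\bB$ is the unique irreducible component of the appropriate dimension, so as a cycle $[\bB]$ is well defined.

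Next I would produce a global vector-bundle resolution of $\bB$. The subsheaf $H^0(R)\sub\sR_0$ (kernel of $\sR_0\to\sR_1$) need not be locally free over $M$, but its double dual, or rather the observation that $\bB$ is the closure of a \emph{subbundle} of a bundle stack, lets one realize $\bB$ as follows: form the complex $R' = [\sR_0\xrightarrow{} \sI]$ where $\sI$ is the (locally free) image; then $h^0(R')=\ker(\sR_0\to\sI)$ surjects onto $H^0(R)$ and agrees with it over $U$, while $h^1(R')=0$, so $h^0(R')\dual$ is a genuine vector bundle containing $\bB$ as the image of $0$-section; more directly, $\bB = h^1/h^0(R'^{\vee}[-1])$ as a vector bundle over $M$. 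Thus $[\bB]$ is the fundamental class of a vector bundle of rank $r:=\rank\sR_0-\rank\sI$ over $M$, and $0^!$ is the composition of pullback along $\bB\to M$ with the refined Gysin map for the zero section of this rank-$r$ bundle; by the standard formula (Fulton, or \cite{BF} in the stacky setting) $0^![\bB] = e(\text{this bundle})\cap[M]$ in $A\lsta M$.

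Finally I would identify the bundle. We have the exact sequence of coherent sheaves $0\to H^0(R)\to\sR_0\to\sI\to 0$, all locally free except possibly $H^0(R)$; but in fact $H^0(R)$ \emph{is} locally free here, because $\sI$ locally free implies the sequence is locally split, hence $H^0(R)=\ker(\sR_0\to\sI)$ is locally free of rank $r$. Dualizing, $e(H^0(R)\dual) = c_r(H^0(R)\dual)$, which is exactly the Euler class of the rank-$r$ bundle whose zero section cuts out $\bB$; combining with the previous paragraph gives $0^![\bB]=e(H^0(R)\dual)$. The only subtlety — the step I expect to be the main obstacle — is the passage from ``$\bB$ is the closure of the zero section of a subbundle over the open set $U$'' to ``$\bB$ is globally the zero section of the \emph{honest} vector bundle $h^0(R)\dual$ over all of $M$'': one must check that no extra components or embedded behavior appear over $M\setminus U$, which is where integrality of $M$, local freeness of the image sheaf $\sI$ (forcing $H^0(R)$ locally free of the right rank everywhere), and irreducibility of $\bB$ are all used together. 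Once that identification is made, everything else is the standard intersection theory of the zero section of a vector bundle.
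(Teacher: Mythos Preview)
Your proposal has a genuine conceptual error: you misidentify $\bB$. By definition $\bB$ is the closure, inside the bundle stack $[\sR_0\dual/\sR_1\dual]$ (you wrote $[\sR_1\dual/\sR_0\dual]$, which is backwards), of the zero section of the rank-$r$ bundle $H^0(R|_U)\dual$ over $U$. That zero section is just $U$ itself, so $\bB$ is an irreducible closed substack of dimension $\dim M$. It is \emph{not} the rank-$r$ bundle $h^1/h^0(R'^{\vee}[-1]) = H^0(R)\dual$, which has dimension $\dim M + r$. Your own sentence ``$h^0(R')\dual$ is a genuine vector bundle containing $\bB$ as the image of the $0$-section'' is essentially right, but the very next clause ``more directly, $\bB = h^1/h^0(R'^{\vee}[-1])$'' identifies $\bB$ with the whole bundle rather than with its zero section, and the rest of the argument inherits this confusion. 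If $\bB$ really were a rank-$r$ sub-bundle of a bundle stack of virtual rank $r$, then $0^![\bB]$ would equal $[M]$, not $e(H^0(R)\dual)$.

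The paper's argument avoids trying to recognize $\bB$ directly as a global object. Instead it pulls back along the smooth quotient $b:\sR_0\dual\to[\sR_0\dual/\sR_1\dual]$: the preimage $b^{-1}(\bB)$ is the closure $\bD\sub\sR_0\dual$ of the image of $\beta\dual:\sR_1\dual|_U\hookrightarrow\sR_0\dual|_U$, and one has $0^!_{[\sR_0\dual/\sR_1\dual]}[\bB]=0^!_{\sR_0\dual}[\bD]$. Then, using exactly your (correct) observation that $H^0(R)$ is locally free and $\alpha\dual:\sR_0\dual\to H^0(R)\dual$ is a surjection of vector bundles, one identifies $\bD=\ker\alpha\dual$ as a corank-$r$ subbundle of $\sR_0\dual$, whence the standard formula $0^!_{\sR_0\dual}[\ker\alpha\dual]=e(H^0(R)\dual)$. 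So the missing idea in your approach is precisely the passage to the honest vector bundle $\sR_0\dual$ via $b$; once there, the identification $\bD=\ker\alpha\dual$ is the substantive step, and it uses the local freeness of the image sheaf in the same way you do.
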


\begin{proof}
Let $\sK_i=H^i(R)$, which fit into the exact sequence
\beq\label{KRRK}0\lra \sK_0\mapright{\alpha} \sR_0\mapright{\beta} \sR_1\mapright{\gamma} \sK_1\lra 0.
\eeq
(Since $\sK_0$ and $\sR_i$ are locally free, we will use the same symbol to denote its associated
vector bundle.)
By assumption and the choice of $U$, $\beta\dual|_U$  is a sub-bundle homomorphism.
Hence $[\sR_0\dual/\sR_1\dual]|_{U}$ is a vector bundle over $U$. Viewing $U\sub [\sR_0\dual/\sR_1\dual]|_{U}$ as
the zero-section, its closure in the bundle stack $h^1/h^0(R\dual[-1]|_U)=[\sR_0\dual/\sR_1\dual]$
is the $\bB$ referred to in the statement of the Lemma.

Let $b:\sR_0\dual\lra [\sR_0\dual/\sR_1\dual]$ be the quotient. As
$\beta\dual(\sR_1\dual|_U)\sub \sR_0\dual|_{U}$ is a sub-bundle; we let $\bD$ be the closure of $\beta\dual(\sR_1\dual|_U)$
in $\sR_0\dual$. The smoothness of $b$ implies $b^{-1}(\bB)=\bD$.
Hence
$$0^!([\bB])=0^!([\bD])\in A\lsta M,
$$
where the two Gysin maps are intersecting with the zero sections of $[\sR_0\dual/\sR_1\dual]$ and of
$\sR_0\dual$, respectively.

We are left to show $0^!([\bD])=e(\sK_0\dual)$. (By assumption, $\sK_0$ is locally free.)
Since $\ker \gamma$ is locally free,  $\alpha\dual:\sR_0\dual\to \sK_0\dual$ is a surjection of vector bundles.
On the other hand since $\sK_1|_U=0$ we can restrict (\ref{KRRK}) on $U$ and take its dual to get an exact sequence of locally free sheaves
$$0\lra \sR_1\dual|_{U} \mapright{\beta\dual} \sR_0\dual|_U\mapright{\alpha\dual} \sK_0\dual|_U\lra 0.
$$
Hence the closure of  $\beta\dual(\sR_1\dual|_{U})$ in  $\sR_0\dual$ equals the closure of $\ker(\alpha\dual)|_U$ in $\sR_0\dual$;
thus $\bD=\ker\alpha\dual$.
As $\sR_0\dual\to \sK_0\dual$ is a surjective bundle homomorphism, we have
$0^!([\bD])=e(\sK_0\dual)$.
\end{proof}

Let $(f_{\tcy\lpri},\pi_{\tcy\lpri})$ be the restriction of $(f_\tcy,\pi_\tcy)$ to $\tcy\lpri$.  Let
$$R=(R\bul\pi_{\tcy\ast} f_\tcy^\ast \sO_\Pf(5))|_{\tcy\lpri}=R\bul\pi_{\tcy\lpri\ast} f_{\tcy\lpri}^\ast \sO_\Pf(5).
$$
By Serre duality, $R\dual\cong (R\bul\pi_{\tcy\ast} f_\tcy^\ast \sP_\tcy)|_{\tcy\lpri}$.
Like in \cite{Beh, LT}, we can represent $R\bul\pi_{\tcY\lpri\ast} f\sta_{\ti\cY\lpri}\sO_{\Pf}(5)=[\sR_0\to\sR_1]$
as a complex of locally free sheaves.
On the other hand, by \cite{VZ} (see also \cite{HL1}, or Prop. \ref{coordinate}), $\pi_{\tcy\lpri\ast}f_{\tcy\lpri}\sta\sO_\Pf(5)$ is locally free of rank $5d$,
and $R^1\pi_{\tcy\lpri\ast}f_{\tcy\lpri}\sta\sO(5)$ is torsion.
Thus applying Corollary \ref{4.2} and Lemma \ref{HL-euler}, we obtain

\begin{coro}\label{LZ}
We have identity
$$\deg\, 0_{\ti\si,\loc}^![\bcpri]=c_{5d}\bl ( \pi_{\tcy\lpri\ast}f_{\tcy\lpri}\sta\sO_\Pf(5))\dual\br=
(-1)^{5d} c_{5d}\bl \pi_{\tcy\lpri\ast}f_{\tcy\lpri}\sta\sO_\Pf(5)\br.
$$
\end{coro}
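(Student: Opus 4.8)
My plan is to deduce Corollary \ref{LZ} directly from Corollary \ref{4.2} and Lemma \ref{HL-euler}, once the relevant objects are matched up. Corollary \ref{4.2} already reduces the left hand side to an honest Euler-class computation: $\deg 0^!_{\ti\si,\loc}[\bcpri]=\deg 0^!\,[0_{\bH_2}]$, where the Gysin map on the right is intersection with the zero section of the bundle stack $\bH_2=h^1/h^0\bl (R\bul\pi_{\tcy\ast}\sP_\tcy)|_{\tcy\lpri}\br$. So I would set $M=\tcy\lpri$ and $R=R\bul\pi_{\tcy\lpri\ast}f_{\tcy\lpri}\sta\sO_\Pf(5)$, and invoke relative Serre duality for the family $\pi_{\tcy\lpri}$ of curves --- applied to $f_{\tcy\lpri}\sta\sO_\Pf(5)=\sL_{\tcy\lpri}^{\otimes 5}$, whose relative Serre dual is $\sL_{\tcy\lpri}^{\otimes(-5)}\otimes\omega_{\cC_{\tcy\lpri}/\tcy\lpri}=\sP_{\tcy\lpri}$ --- to identify $(R\bul\pi_{\tcy\ast}\sP_\tcy)|_{\tcy\lpri}$ with the shifted dual $R\dual[-1]$. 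Then $\bH_2=h^1/h^0(R\dual[-1])$ is exactly the bundle stack occurring in Lemma \ref{HL-euler}, and $H^0(R)=\pi_{\tcy\lpri\ast}f_{\tcy\lpri}\sta\sO_\Pf(5)$.

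The substantive part of the plan is then to verify the hypotheses of Lemma \ref{HL-euler} and to identify $0_{\bH_2}$ with the cycle $\bB$ of that lemma. The stack $\tcy\lpri$ is integral: it is smooth by Corollary \ref{3.2} and it is irreducible, being isomorphic (again by Corollary \ref{3.2}) to the proper transform of the irreducible component $\cX\lell$ of $\barM_1(\Pf,d)$, and for the same reason it is proper of dimension $5d$. By \cite{VZ} (see also \cite{HL1}, or Proposition \ref{coordinate}) the sheaf $H^0(R)=\pi_{\tcy\lpri\ast}f_{\tcy\lpri}\sta\sO_\Pf(5)$ is locally free of rank $5d$ and $H^1(R)=R^1\pi_{\tcy\lpri\ast}f_{\tcy\lpri}\sta\sO_\Pf(5)$ is torsion; in fact over $\tcy\lpri^\circ=\tcy\lpri-\ti\Delta$ the minimal genus-one subcurve of the domain has positive $f_\tcy\sta\sO_\Pf(5)$-degree (as in the proof of Lemma \ref{cone1}) and every component has non-negative degree, so $H^1$ of $f_\tcy\sta\sO_\Pf(5)$ vanishes there, whence $\mathrm{supp}\,H^1(R)\sub\ti\Delta$ and in particular $\tcy\lpri^\circ\sub U:=\tcy\lpri-\mathrm{supp}\,H^1(R)$. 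Since $\tcy\lpri^\circ$ and $U$ are both dense open in the irreducible $\tcy\lpri$ and the zero section is irreducible, the closure of the zero section of $\bH_2|_{\tcy\lpri^\circ}$ equals the closure of the zero section of $\bH_2|_U=H^0(R|_U)\dual$; that is, $0_{\bH_2}=\bB$. Finally, for a locally free two-term representative $R=[\sR_0\to\sR_1]$ (as in \cite{Beh, LT}) the image of $\sR_0\to\sR_1$ is locally free: the explicit resolution of $R\bul p_{S\ast}\sL_S^{\otimes 5}$ recalled in the proof of Proposition \ref{coordinate} (following \cite{HL1}) shows that, locally, $H^1(R)\cong\sO_S/(\xi)$ for a nonzerodivisor $\xi$, so $H^1(R)$ has projective dimension one, and the image sheaf --- the kernel of the surjection $\sR_1\to H^1(R)$ --- is therefore locally free. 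I expect this last point, the local-freeness of the image sheaf, to be the only step needing input beyond the quoted results, and it is precisely what the explicit local structure of $\tcy$ in Proposition \ref{coordinate} supplies.

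With these checks in place, Lemma \ref{HL-euler} gives $0^!\,[0_{\bH_2}]=0^!\,[\bB]=e\bl H^0(R)\dual\br=c_{5d}\bl ( \pi_{\tcy\lpri\ast}f_{\tcy\lpri}\sta\sO_\Pf(5))\dual\br$, the top Chern class of a rank $5d$ bundle on the proper, $5d$-dimensional stack $\tcy\lpri$, so its degree is well defined. Combining with Corollary \ref{4.2} and the elementary identity $c_{5d}(\sF\dual)=(-1)^{5d}c_{5d}(\sF)$ for a rank $5d$ bundle $\sF$ yields
$$\deg 0^!_{\ti\si,\loc}[\bcpri]=c_{5d}\bl ( \pi_{\tcy\lpri\ast}f_{\tcy\lpri}\sta\sO_\Pf(5))\dual\br=(-1)^{5d} c_{5d}\bl \pi_{\tcy\lpri\ast}f_{\tcy\lpri}\sta\sO_\Pf(5)\br,$$
which is the assertion of the Corollary. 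Via the isomorphism $\tcy\lpri\cong\tcX\lpri$ of Corollary \ref{3.2} and the definition \eqref{def-red} of the reduced invariant, this also settles Proposition \ref{prop1}.
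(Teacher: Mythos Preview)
Your proposal is correct and follows essentially the same route as the paper: set $R=R\bul\pi_{\tcy\lpri\ast}f_{\tcy\lpri}\sta\sO_\Pf(5)$, use relative Serre duality to identify $\bH_2$ with $h^1/h^0(R\dual[-1])$, and then feed Corollary \ref{4.2} into Lemma \ref{HL-euler} after checking its hypotheses via \cite{VZ,HL1}/Proposition \ref{coordinate}. You are in fact more scrupulous than the paper in verifying the local-freeness of the image of $\sR_0\to\sR_1$ and in matching $0_{\bH_2}$ with the cycle $\bB$ of Lemma \ref{HL-euler}, both of which the paper leaves implicit.
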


Note that the right hand side is the reduced genus one GW-invariants introduced in \cite{LZ}.

\section{Cosection localized Gysin map via compactifications}
\def\barW{{\overline W}}
\def\barV{{\overline V}}
\def\barC{\overline C}

We begin with a general discussion of a special type of localized Gysin maps.
Let $M$ be a Deligne-Mumford stack, $L$ be a line bundle on $M$, and
$V_1$ and $V_2$ be two vector bundles on $M$. Suppose we are given two
vector bundle homomorphisms\footnote{Bundle homomoprhisms in this paper are
possibly degenerate, (i.e. those induced by sheaf-homomorphisms
of respective bundles).}
\beq\label{xixi} \xi_1: V_1\otimes L \lra \sO_M,\and
\xi_2: V_2\to\sO_M
\eeq
such that $\xi_1$ is surjective. (Here $\sO_M$ is viewed as the rank one trivial line bundle on $M$.)

We let $\gamma: W=\text{Total}(L)\to M$ be the total space of $L$. The tautological (identity) section
$\epsilon\in \Gamma(W, \gamma\sta L)$ paired with $\gamma\sta(\xi_1)$ defines a bundle homomorphism
\beq\label{xi2}
\ti\xi_1\defeq\gamma\sta(\xi_1)(\cdot \otimes \epsilon): \ti V_1\defeq \gamma\sta V_1\lra \sO_W.
\eeq
We let $\ti\xi_2=\gamma\sta(\xi_2): \ti V_2\defeq  \gamma\sta V_2\to\sO_W$.

Let
$$\ti\xi=(\ti \xi_1,\ti\xi_2): \ti V\defeq \ti V_1\oplus \ti V_2\lra \sO_W.
$$
Since $\xi_1$ is surjective, $\ti\xi_1$ is surjective away from the zero-section $0_W= M\sub W$;
thus the non-surjective locus $D(\ti\xi)$ of $\ti\xi$ is contained in
the zero-section $0_W\sub W$. We let $U=W-D(\ti\xi)$, and form
$$\ti V(\ti\xi)=\ti V|_{D(\ti\xi)}\cup \ker\{ \ti\xi\,|_{U}: \ti V|_{U}\to \sO_{U}\}\sub \ti V.
$$
The cosection $\ti\xi$ defines a localized Gysin map (cf. \cite{KL})
\beq\label{ti-xi} 0^!_{\ti\xi,\loc}: A\lsta \ti V(\ti\xi)\lra A\lsta D(\ti\xi).
\eeq

This localized Gysin map has a simple interpretation in the situation under consideration.
We let
$$
\bar\gamma: \barW=\bP(L\oplus \sO_M)\lra M
$$
be the obvious compactification of $W=\barW-D_\infty$, where $D_\infty=\bP(L\oplus 0)$ is a divisor of $\barW$. We extend
$\ti V_1$ and $\ti V_2$ to $\barW$ via
$$\barV_1=\bar\gamma\sta V_1(-D_\infty)\and \barV_2=\bar\gamma\sta V_2.
$$
We let $\bar\xi_2=\bar\gamma\sta \xi_2$, which is the extension of $\ti\xi_2$.
Because of the expression \eqref{xixi}, $\ti\xi_1$ extends to a homomorphism
$\bar\xi_1: \barV_1=\bar\gamma\sta (-D_\infty)\to \sO_{\barW}$. Let
$$\bar\xi=(\bar\xi_1,\bar\xi_2): \barV\defeq \barV_1 \oplus \barV_2\lra \sO_\barW.
$$
Because $\ti\xi_1$ has the form \eqref{xi2} and $\xi_1$ is surjective, $\bar\xi_1$ is surjective along $D_\infty$;
thus so does $\bar\xi$. 
Consequently, the non-surjective locus of $\bar\xi$ and $\ti\xi$ are identical; namely
\beq\label{deg-locus}
D(\bar\xi)=D(\ti\xi).
\eeq

\begin{lemm}
Let $\iota_!: Z\lsta \ti V(\ti\xi)\to Z\lsta \barV$ be defined by sending a closed integral $[C]\in Z\lsta \ti V(\ti\xi)$
to the cycle of its closure in $\barV$: $\iota_![C]=[\overline C]\in Z\lsta \barV$,
and then extending to $Z\lsta \ti V(\ti\xi)$ by linearity. Let $\tau: D(\ti\xi)\to M$ be the inclusion. Then we have
$$\bar\gamma\lsta\circ 0^!_{\barV}\circ \iota_! = \tau\lsta\circ 0^!_{\ti\xi,\loc}: Z\lsta \ti V(\ti\xi)\lra A\lsta M.
$$
\end{lemm}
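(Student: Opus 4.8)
The plan is to reduce the identity to a statement comparing a cosection-localized Gysin map on the open bundle $\ti V(\ti\xi)$ over $W$ with an ordinary (non-localized) Gysin map on the compactified bundle $\barV$ over $\barW$, using the fact that the degeneracy loci agree \eqref{deg-locus}. The key mechanism is the functoriality of refined Gysin maps with respect to the open/closed decomposition $\barW = W \sqcup D_\infty$, together with the fact that $\bar\xi$ is surjective \emph{everywhere} along the divisor at infinity $D_\infty$, so that no contribution to the localized class can come from $D_\infty$. I would first record, for a closed integral cycle $[C] \in Z_*\ti V(\ti\xi)$, the basic excision fact that $\iota_![C] = [\overline{C}]$ restricts on the open locus $\ti V|_U \subset \barV|_{W}$ to $[C]$ itself, and that $\overline{C}$ meets $\barV|_{D_\infty}$ only over $D(\bar\xi)\cap D_\infty$, which is empty since $\bar\xi_1$ is surjective along $D_\infty$. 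Hence the cone $\overline{C}$ is supported, away from the zero section, inside $\ker(\bar\xi)$ over $\barW - D(\bar\xi)$, which is exactly the condition needed for $0^!_{\barV}$ applied to $\iota_![C]$ to land (after pushforward) in cycles supported over $D(\bar\xi) = D(\ti\xi) \subset W \subset \barW$.

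Next I would invoke the definition of the cosection-localized Gysin map from \cite{KL}: $0^!_{\ti\xi,\loc}[C]$ is constructed from a cycle representative of the localized intersection, and its pushforward $\tau_* 0^!_{\ti\xi,\loc}[C]$ to $M$ agrees with the ordinary refined Gysin pullback $0^!_{\ti V}$ of the class $[C]$ once we know $C$ sits inside $\ti V(\ti\xi)$ — this is precisely the compatibility of cosection localization with the ordinary zero-section intersection when the cone already avoids the locus where the cosection is nonzero. The point is then that the same representing cycle computes both sides: on the $W$-side one uses $0^!_{\ti\xi,\loc}$ on $\ti V(\ti\xi)$, and on the $\barW$-side one uses $0^!_{\barV}$ on $\overline{C}$; but $\overline{C} \cap \barV|_{D_\infty}$ contributes nothing to the zero-section intersection because it is disjoint from the zero section of $\barV$ restricted to $D_\infty$ (again using surjectivity of $\bar\xi_1$ there, which forces $\overline{C}|_{D_\infty}$ into a proper subbundle of $\barV|_{D_\infty}$, hence not containing its zero section — more precisely, the localized class only sees the part of the cone lying over $D(\bar\xi)$). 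Pushing forward along $\bar\gamma$ and using $\bar\gamma|_{D(\bar\xi)} = \tau$ (since $D(\bar\xi) = D(\ti\xi) \subset 0_W = M$) then gives the claimed equality $\bar\gamma_* \circ 0^!_{\barV}\circ \iota_! = \tau_* \circ 0^!_{\ti\xi,\loc}$ on integral generators, and I extend by linearity.

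I expect the main obstacle to be the careful bookkeeping needed to justify that the \emph{ordinary} Gysin map $0^!_{\barV}$ applied to $\iota_![C]$ actually produces a class supported on $D(\bar\xi)$ and coincides there with the output of the \emph{localized} Gysin map — in other words, matching the construction of $0^!_{\ti\xi,\loc}$ in \cite{KL} (which typically proceeds via the blow-up or the localized Chern class / MacPherson graph construction) against the plain intersection with the zero section of $\barV$. The clean way to do this is to observe that, because $\bar\xi$ is a \emph{surjective} cosection of $\barV$ on all of $\barW$ \emph{except} on $D(\bar\xi)$, the kernel bundle-stack $\ker(\bar\xi)$ over $\barW - D(\bar\xi)$ is an honest subbundle of $\barV$ of corank one, and $\iota_![C]$ lies in $\ker(\bar\xi) \cup \barV|_{D(\bar\xi)}$; then the equality of the two Gysin-type operations is a direct instance of the compatibility lemma in \cite{KL} between cosection-localized Gysin maps and the ordinary Gysin maps when the cone is already contained in the kernel — with $D_\infty$ contributing nothing precisely because $D_\infty \cap D(\bar\xi) = \emptyset$. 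Once this identification is in place, the pushforward formula is formal, and the proof is complete.
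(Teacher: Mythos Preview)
Your overall strategy matches the paper's: factor $\iota_!$ through the analogous substack $\barV(\bar\xi)\sub\barV$, invoke the compatibility in \cite{KL} between the cosection-localized and ordinary Gysin maps, and use $D(\bar\xi)=D(\ti\xi)$ to pass from $\barW$ back to $W$. Your final paragraph is essentially the paper's argument.

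That said, two points deserve correction. First, the claim that ``$\overline{C}$ meets $\barV|_{D_\infty}$ only over $D(\bar\xi)\cap D_\infty$, which is empty'' is false: the closure $\overline{C}$ will typically meet $\barV|_{D_\infty}$ nontrivially (e.g.\ if $C$ contains the zero section over $W$). What is true, and what you correctly state later, is that $\overline{C}\sub\barV(\bar\xi)$, i.e.\ over $D_\infty$ the closure lies in $\ker(\bar\xi)$. Second, the assertion that $0^!_{\barV}[\overline{C}]$ is ``supported on $D(\bar\xi)$'' is not correct as stated: the ordinary Gysin map lands in $A_*\barW$, not in $A_*D(\bar\xi)$. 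The paper handles this cleanly by introducing the intermediate localized map $0^!_{\bar\xi,\loc}:Z_*\barV(\bar\xi)\to A_*D(\bar\xi)$ and using the commutative square of \cite[Prop.~1.3]{KL},
\[
\tau''_*\circ 0^!_{\bar\xi,\loc}=0^!_{\barV}\circ\tau'_*,
\]
where $\tau',\tau''$ are the inclusions of $\barV(\bar\xi)$ and $D(\bar\xi)$. Then one checks $0^!_{\bar\xi,\loc}\circ\iota_!'=0^!_{\ti\xi,\loc}$ directly from the construction (this is where $D(\bar\xi)=D(\ti\xi)\sub W$ and $\bar\xi|_W=\ti\xi$ are used), and composes with $\bar\gamma_*$. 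Organizing your argument this way removes the ambiguity about where classes are supported and avoids the incorrect intermediate claims.
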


\begin{proof}
We let $\barV(\bar\xi)$ be defined similar to $\ti V(\ti\xi)$ with `` $\ti{}$ '' replaced by `` $\bar{}$ ".
Let $0^!_{\bar\xi,\loc}: Z\lsta \barV(\bar\xi)\to A\lsta D(\bar\xi)$ be the localized Gysin map associated to $\bar\xi$.
Let $\tau': \barV(\bar\xi)\to \bar V$ and $\tau\dpri: D(\bar\xi)\to \barW$ be the inclusions. By \cite[Prop 1.3]{KL}, we have the commutative square
$$\begin{CD}
Z\lsta \barV(\bar \xi) @>{0^!_{\bar\xi,\loc}}>> A\lsta D(\bar\xi)\\
@VV{\tau'\lsta}V @VV{\tau\lsta\dpri}V\\
Z\lsta \barV @>{0^!_{\barV}}>> A\lsta \barW.
\end{CD}
$$
On the other hand, since $\bar\xi$ is an extension of $\ti\xi$, the homomorphism
$\iota_!: Z\lsta \ti V(\ti\xi)\to Z\lsta \barV$ factors through $\iota_!': Z\lsta \ti V(\ti\xi)\to Z\lsta \barV (\bar \xi)$.
Composing, we obtain
$$\tau\lsta\dpri\circ 0^!_{\bar\xi,\loc}\circ\iota_!'=0^!_{\barV}\circ \tau\lsta'\circ\iota_!'=0^!_{\barV}\circ\iota_!.
$$
Since $\bar\xi$ is an extension of $\ti\xi$ and $D(\bar\xi)=D(\ti\xi)$, tracing through the construction of the localized
Gysin maps, we conclude $0^!_{\bar\xi,\loc}\circ\iota_!'=0^!_{\ti\xi,\loc}$. Composed with $\bar\gamma\lsta :A\lsta \barW\to A\lsta M$, we obtain
$$\bar\gamma\lsta\circ 0^!_\barV\circ\iota_!=\bar\gamma\lsta\circ   \tau\lsta\dpri\circ 0^!_{\bar\xi,\loc}\circ\iota_!' =\tau\lsta\circ 0^!_{\ti\xi,\loc}:
Z\lsta \ti V(\ti\xi)\lra A\lsta M.
$$
This proves the Lemma.
\end{proof}

\begin{coro}\label{count}
Let $C\sub \ti V(\ti\xi)$ be closed and integral; let $\overline C\sub \barV$ be its closure, and
let $\overline C_b=\overline C\cap (0\oplus \barV_2)$. We let $N_{\overline C_b}\overline C$
be the normal cone to $\overline C_b$ in $\overline C$; it is a cycle in $\barV$.
Then we have
\beq\label{n-cone}
\deg 0^!_{\ti\xi,\loc}[C]=\deg 0^!_{\barV}[\overline C]=\deg 0^!_{\barV} 
[N_{\overline C_b}\overline C].
\eeq
\end{coro}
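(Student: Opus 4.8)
The plan is to establish the two equalities of \eqref{n-cone} in turn: the first is immediate from the preceding Lemma, and the second is a deformation-to-the-normal-cone identity taking place entirely inside $\barV$.

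For the first equality, note that since $C$ is closed and integral, $\iota_![C]=[\overline C]$ directly from the definition of $\iota_!$. Feeding $[C]$ into the preceding Lemma gives the identity
$$\bar\gamma\lsta\,0^!_\barV[\overline C]=\bar\gamma\lsta\,0^!_\barV\,\iota_![C]=\tau\lsta\,0^!_{\ti\xi,\loc}[C]\in A\lsta M.$$
Since $\bar\gamma\colon\barW\to M$ is proper (it is a $\PP^1$-bundle) and $\tau\colon D(\ti\xi)\to M$ is a closed immersion, proper pushforward preserves the degree of a $0$-cycle, so taking degrees yields $\deg 0^!_\barV[\overline C]=\deg 0^!_{\ti\xi,\loc}[C]$. (The degrees are meaningful because in the situations where the Corollary is applied $D(\ti\xi)$ is proper.)

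For the second equality I would in fact prove the stronger statement $0^!_\barV[\overline C]=0^!_\barV[N_{\overline C_b}\overline C]$ in $A\lsta\barW$. Regard $\barV=\barV_1\oplus\barV_2$ as the total space of the pullback bundle $p_2\sta\barV_1$ over $\barV_2$, where $p_2\colon\barV_2\to\barW$ is the projection; its zero section is exactly the subbundle $0\oplus\barV_2\sub\barV$, a regular closed immersion $\iota_2$ with normal bundle $p_2\sta\barV_1$, and $\overline C_b=\overline C\cap(0\oplus\barV_2)$ is the scheme-theoretic pullback of this zero section to $\overline C$. By functoriality of Gysin maps for the composition of zero sections $\barW\hookrightarrow(0\oplus\barV_2)\hookrightarrow\barV$ one has $0^!_\barV=0^!_{\barV_2}\circ\iota_2^!$, with $0^!_{\barV_2}\colon A\lsta\barV_2\to A\lsta\barW$. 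The basic construction of the refined Gysin map (cf.\ \cite{KL}) identifies $\iota_2^![\overline C]$ with $0^!_{(p_2\sta\barV_1)|_{\overline C_b}}[N_{\overline C_b}\overline C]$, where $N_{\overline C_b}\overline C$ is viewed inside $\barV$ as a closed subcone of $(p_2\sta\barV_1)|_{\overline C_b}$. Since $N_{\overline C_b}\overline C$ is a cone with vertex $\overline C_b$, its normal cone along $\overline C_b$ is itself, so the same computation applied to it gives $\iota_2^![N_{\overline C_b}\overline C]=0^!_{(p_2\sta\barV_1)|_{\overline C_b}}[N_{\overline C_b}\overline C]$ as well. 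Hence $\iota_2^![\overline C]=\iota_2^![N_{\overline C_b}\overline C]$, and composing with $0^!_{\barV_2}$ gives the claim. Equivalently one may argue by rational equivalence: the deformation to the normal cone of $\overline C_b$ in $\overline C$ embeds into that of $0\oplus\barV_2$ in $\barV$, the latter being $\barV\times\Ao$; compactifying over $\PP^1$ and using $[0]\sim[\infty]$ on $\PP^1$ shows $[\overline C]\sim[N_{\overline C_b}\overline C]$ in $A\lsta\barV$, after which one applies $0^!_\barV$.

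The point requiring care is the second equality: one must keep the Gysin factorization $0^!_\barV=0^!_{\barV_2}\circ\iota_2^!$ straight and, in the rational-equivalence variant, correctly identify the $0$- and $\infty$-fibers of the compactified deformation family — the embedding of the deformation space into $\barV\times\Ao$ is twisted by the deformation parameter, so it is the $\infty$-fiber, not the naive product fiber, that pushes forward to $[\overline C]$. The first equality, by contrast, is a formal consequence of the preceding Lemma together with the properness of $\bar\gamma$ and $\tau$.
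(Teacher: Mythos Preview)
Your proposal is correct. The paper states this result as a corollary without proof, treating both equalities as immediate: the first from the preceding Lemma (exactly as you argue), and the second as a standard deformation-to-the-normal-cone identity. Your argument via the factorization $0^!_{\barV}=0^!_{\barV_2}\circ\iota_2^!$ and the observation that $N_{\overline C_b}(N_{\overline C_b}\overline C)=N_{\overline C_b}\overline C$ is a clean way to make the second equality precise, and is in line with how the paper subsequently uses the result (cf.\ \eqref{Rmu} and \eqref{int-11}, where $R_\mu$ is treated as a cone in $\barV_{1,\mu}\times_{\barW_\mu}|\barC_{\mu,b}|$ and intersected first with $0_{\barV_{1,\mu}}$).
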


The intersection $\barC_b$ has an easy description in the homogeneous case.
Since $W$ is the total space of the line bundle $L$,
we have the dilation morphism
\beq\label{mt}
\fm_t: W\lra  W
\eeq
defined by sending $x\in L|_{x'}$ (over $x'\in M$) to
$t x\in L|_{x'}\sub W$. This defines a $\CC\sta$-action on $W$ that preserves the morphism $W\to M$
with $\CC\sta$ acting trivially on $M$; its fixed locus is the zero section $0_W\sub W$.

We define
$\Phi_{i,0}(t): \ti V_i\lra \fm_t\sta \ti V_i$
to be the homomorphism that keeps $\gamma\sta e$ invariant, where $\gamma: W\to M$ and $\gamma^\ast e\in V_i$; and for $k\in\ZZ$, we define
\beq\label{Phi12}
\Phi_{i,k}(t)=t^k\cdot \Phi_{i,0}(t): \ti V_i\lra \fm_t\sta \ti V_i.
\eeq

\begin{defi}\label{def-weight}
We say a closed integral $C\sub \ti V$ homogeneous of weight $(k_1,k_2)$ if it is invariant under $(\Phi_{1,k_1}(t), \Phi_{2,k_2}(t))$ for
all $t\in \CC\sta$. We say a cycle $\alpha\in Z\lsta \ti V$ is homogeneous of weight $(k_1,k_2)$ if
each of its integral components is homogeneous of weight $(k_1,k_2)$.
\end{defi}

We now investigate the intersection $\barC_b=\barC\cap (0\oplus \barV_2)$ near $D_\infty$. Since this is a
local problem, by replacing $M$ by its affine \'etale chart, we can assume $M=\spec A$ is affine,
and $L\cong \sO_M$ and $V_i\cong \sO_M^{\oplus n_i}$ are trivialized.
Using such trivializations, we have induced isomorphisms
\beq\label{coor}
\barW=M\times\Po,\quad  \barV_2\cong (M\times\Po)\times\A^{n_2},
\eeq
and $D_\infty=M\times\{\infty\}\sub M\times\Po$.

\begin{lemm}\label{compactify}
Let $M=\spec A$, and $L$ and $V_i$ are trivialized as stated.
Let $C\sub \ti V$ be a homogeneous closed integral substack of weight $(0,1)$. Then
under \eqref{coor}, and for a $B\sub M\times\A^{n_2}$,
$$\barC\cap(0\oplus \barV_2)|_{\barW-0_W}=
B\times(\Po-0).
$$
\end{lemm}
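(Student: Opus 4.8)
We work in the affine local model $M=\Spec A$, with the trivializations $L\cong\sO_M$, $V_2\cong\sO_M^{\oplus n_2}$, $V_1\cong \sO_M^{\oplus n_1}$ fixed, so that $\barW=M\times\Po$ with coordinate $[s_0:s_1]$ on $\Po$ (the affine coordinate $t=s_1/s_0$ on $W=\barW-D_\infty$, $D_\infty=\{s_0=0\}$), and $\barV_1=\bar\gamma\sta V_1(-D_\infty)$, $\barV_2=\bar\gamma\sta V_2$. The goal is to describe the intersection $\barC\cap(0\oplus\barV_2)$ on the open set $\barW-0_W$ (where $0_W=M\times\{0\}$), and show it has the product form $B\times(\Po-0)$ for a subscheme $B\subset M\times\A^{n_2}$. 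The point is that homogeneity of weight $(0,1)$ under the $\CC\sta$-action forces $\barC$ to be $\CC\sta$-invariant in a way that makes its closure a ``cone with vertex at infinity'' in the $\Po$-direction, so that away from the zero section its fibre over each point of $\Po-0$ is constant.

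\emph{Step 1: Make the $\CC\sta$-action explicit in coordinates.} The dilation $\fm_t$ acts on $W=M\times\A^1$ by $t\cdot t'=tt'$ (trivial on $M$), hence on $\barW=M\times\Po$ by $[s_0:s_1]\mapsto[s_0:ts_1]$, fixing both $0_W=\{s_1=0\}$ and $D_\infty=\{s_0=0\}$. Using $\Phi_{1,0}(t)$ and $\Phi_{2,0}(t)$ to identify $\fm_t\sta\ti V_i$ with $\ti V_i$ via the pullback of a frame of $V_i$, the weight $(0,1)$ condition says: if $(x,t',v_1,v_2)\in\ti V_1\oplus\ti V_2$ lies in $C$ (with $x\in M$, $t'\in\A^1$, $v_i\in\A^{n_i}$), then $(x,tt',v_1,tv_2)\in C$ for all $t\in\CC\sta$. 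First I would record this and pass to the closure: the weight $(0,1)$ action extends to $\barV$, and $\barC$ is invariant under it.

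\emph{Step 2: Locate $\barC_b=\barC\cap(0\oplus\barV_2)$ and restrict to $\barW-0_W$.} Setting $v_1=0$, a point of $\barC_b$ over $[s_0:s_1]\in\Po-\{0\}$ (so $s_1\ne0$, i.e. $t\ne 0$ in the chart, or the point at $\infty$) and over $x\in M$ carries a vector $v_2\in\A^{n_2}$. Because $\barC$ is invariant under $(\Phi_{1,0},\Phi_{2,1})$ and $\CC\sta$ acts transitively on $\Po-\{0\}$ (sending $t'\mapsto tt'$), applying $\fm_t$ we may move any point of $\Po-\{0\}$ to, say, $t'=1$; under this move $v_2\mapsto tv_2$ scales, but whether $v_2$ lies in the fibre of $\barC_b$ is preserved (up to this linear scaling which does not change the subscheme defined as the $v_1=0$ slice). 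Concretely: define $B\subset M\times\A^{n_2}$ to be the fibre of $\barC_b$ over the point $t'=1$ (i.e. $B=\{(x,v_2): (x,1,0,v_2)\in\barC\}$, or more invariantly the image of $\barC_b\cap(t'=1)$ in $M\times\A^{n_2}$). Then the $\CC\sta$-invariance gives that the fibre of $\barC_b$ over an arbitrary $t'\ne0$ is $\{t'\cdot v_2 : (x,v_2)\in B_x\}$, which as a subscheme of $\A^{n_2}$ is just $B_x$ rescaled — but rescaling by a unit is an isomorphism of $\A^{n_2}$, and more to the point the total space $\{(x,t',v_2): t'\ne0, (x,v_2/t')\in B_x\}$ is, after the coordinate change $v_2\mapsto v_2/t'$ (valid on $t'\ne0$), exactly $B\times(\A^1-0)$; including $\infty$ and using invariance under $t\to\infty$ at the $D_\infty$ end gives $B\times(\Po-0)$. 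I would present this as: the weight-$(0,1)$ homogeneity exhibits $\barC|_{v_1=0,\, \Po-0}$ as the total space of a $\CC\sta$-equivariant family over $\Po-0$ which is necessarily a product by the transitivity of the action and triviality of the stabilizer.

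\emph{Main obstacle.} The genuinely delicate point is the passage to the \emph{closure} and the behaviour at $D_\infty$: a priori $\barC\cap(0\oplus\barV_2)$ could acquire extra components or embedded structure over $D_\infty=M\times\{\infty\}$ that are not seen on $W$. The resolution is exactly the hypothesis $C\subset\ti V$ homogeneous of weight $(0,1)$ together with the fact (from Step 1) that $\fm_t$ fixes $D_\infty$ pointwise: the $\CC\sta$-orbit closures run \emph{transversally} into $D_\infty$ away from $0_W$, so the scheme-theoretic intersection $\barC\cap(0\oplus\barV_2)$ near $D_\infty$ is the flat limit of the product family over $\A^1-0$, hence still a product. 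I would make this precise either by a direct computation in the homogeneous coordinates (the defining ideal of $\barC$ is generated by $s_0$-homogenized, $v_2$-weight-graded elements, so its $v_1=0$ slice is literally $(\text{ideal of }B)\cdot\sO_{M\times\Po}$ away from $s_1=0$), or by invoking flatness of $\barC\to\Po$ away from $0_W$ which follows from $\CC\sta$-equivariance and irreducibility of $C$. Everything else — the coordinate identifications \eqref{coor}, the extension of $\Phi_{i,k}(t)$ to $\barV$ — is routine bookkeeping.
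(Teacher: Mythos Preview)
Your $\CC\sta$-equivariance approach is on the right track and matches the paper in spirit, but there is a genuine gap at $D_\infty$. The slip in Step~2 is that $\CC\sta$ does \emph{not} act transitively on $\Po-\{0\}$: the point $\infty$ is fixed. So equivariance gives the product structure only over $\A^1-\{0\}$, and the extension to $\infty$ needs a separate argument. Neither of your proposed fixes works as stated: flatness of $\barC\to\Po$ (which does follow from integrality and dominance) does not imply triviality of the family, and your ``homogenized ideal'' sketch never identifies the extra structure that is actually needed.

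The missing ingredient --- used silently in the paper's proof --- is that $C\sub\ti V$ is a \emph{cone} (invariant under fibrewise scaling $(v_1,v_2)\mapsto(sv_1,sv_2)$), not merely weight-$(0,1)$ homogeneous. Without the cone hypothesis the lemma is actually false: take $M$ a point, $n_1=0$, $n_2=1$, and $C=\{v_2=t'\}$; this is weight-$(0,1)$ homogeneous, closed, integral, and its closure in $\barV_2=\Po\times\A^1$ is not a product over $\Po-0$. The paper's argument runs as follows. Weight-$(0,1)$ homogeneity means the ideal of $C|_{W-0_W}$ is generated by $\{f(x,t\upmo y):f\in J\}$ for some ideal $J\sub A[x,y]$. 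Now change the trivialisation of $\barV_1=\bar\gamma\sta V_1(-D_\infty)$ near $\infty$ to the frame $e_i=t\upmo\eps_i$, with fibre coordinates $x_i'=tx_i$; the generators become $f(t\upmo x',t\upmo y)$. Because $C$ is a cone, $J$ is homogeneous in $(x,y)$, so this ideal equals $(f(x',y):f\in J)$, visibly independent of $t$. Hence all of $\barC|_{\barW-0_W}$ --- not just its $v_1=0$ slice --- is a product $B'\times(\Po-0)$, and intersecting with $x'=0$ finishes. The twist $(-D_\infty)$ is precisely what converts the cone scaling into the $t$-rescaling needed to kill the $t$-dependence at $\infty$; your coordinate change $v_2\mapsto v_2/t'$ is one half of this, but you never invoke the twist on $\barV_1$ nor the cone property, and both are essential.
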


\begin{proof}
Adding $V_1\cong \sO_M^{\oplus n_1}$, we have the induced isomorphism
$$\ti V\cong (M\times\Po)\times \A^{n_1}\times\A^{n_2}.
$$
We let $t$ (resp. $x=(x_i)$; resp. $y=(y_j)$) be the standard coordinate variables of $\Ao=\Po-\{\infty\}$
(resp. of $\A^{n_1}$; resp. of $\A^{n_2}$). Because $C$ is homogeneous of weight $(0,1)$,
the ideal of $C|_{W-0_W}\sub \barV|_{W-0_W}$
is generated by elements
$$\{f(x_1,\cdots,x_{n_1},  t\upmo y_1,\cdots,  t\upmo y_{n_2})\mid  f\in J\sub A[x, y]\},
$$
where $J$ is an ideal in the polynomial ring $A[x,y]=A[x_1,\cdots, x_{n_1}, y_1,\cdots,y_{n_2}]$.

We now pick a new trivialization of $\barV_1$ over $\barW-0_W$.
We let $\eps_1,\cdots,\eps_{n_1}$ be a basis of $\barV_1$ that is the pullback of a
basis of $V_1$ (over $M$). As $t\upmo$ extends to a regular function near $D_\infty$ and has vanishing
order one along $D_\infty$, $e_i=t\upmo\cdot \eps_i$, $1\le i\le n_1$,  form a basis of $\barV_1|_{\barW-0_W}$.

%
%
%
We let
\beq\label{new-iso}
\barV|_{\barW-0_W}\cong M\times(\Po-0)\times\A^{n_1}\times\A^{n_2}
\eeq
be the isomorphism induced by the given trivializations of $L$ and $V_2$,
and the new trivialization of $\barV_1$ using the basis $e_i$.
We let $x'=(x_i')$ be the standard coordinate variables of
$\A^{n_1}$ in \eqref{new-iso}, then $x_i$ and $x_i'$ are related by
$x_i=t\upmo x_i'$.
Thus in the coordinates $(x_i', y_j)$, $C|_{W-0_W}$ is defined by the ideal generated by
\beq\label{new-gen}\{f(t\upmo x_1',\cdots, t\upmo x_{n_1}',  t\upmo y_1,\cdots,  t\upmo y_{n_2})\mid  f\in J\sub A[x, y]\}.
\eeq
Finally, since $C\sub \ti V$ is a cone, $J$ is a homogeneous ideal; thus the same ideal
is also generated by
\beq\label{new-2}
\{f(x_1',\cdots,x_{n_1}',  y_1,\cdots,  y_{n_2})\mid  f\in J\sub A[x, y]\}.
\eeq

Therefore, 
$\barC\cap \barV|_{\barW-0_W}\sub \barV|_{\barW-0_W}$
is defined by the ideal generated by \eqref{new-2}. Thus $\barC\cap \barV|_{\barW-0_W}=
B'\times(\Po-0)$ for a $B'\sub M\times \A^{n_1}\times\A^{n_2}$. 
Intersecting with
$\barV_2|_{\barW-0_W}$ proves the Lemma.
\end{proof}

We will show that the pair $\tcX\lgst\sub \tcy\lgst$ fits into the description of the pair
$M\sub W$ described.
To this end, we introduce the corresponding vector bundles $V_i$ and $\ti V_i$. As the deformation complex of $\tcY/\tcD$ is $E_{\tcy/\ti\cD}=R\bul\pi_{\tcy\ast}(\sL_\tcy^{\oplus 5}\oplus \sP_\tcy)$, we introduce
$$\ti\bV_1=h^1/h^0((R\bul\pi_{\tcy\ast}\sL_{\tcy}^{\oplus 5})|_{\tcy\lgst}),\ \
\ti\bV_2=h^1/h^0((R\bul\pi_{\tcy\ast}\sP_{\tcy})|_{\tcy\lgst}),\ \  \ti\bV=\ti\bV_1\times_{\tcy\lgst}\ti\bV_2.
$$
By the base change property of the $h^1/h^0$-construction, and the construction of $\bC\lgst$ (cf. \eqref{split-0}), 
$$[\bcgst]\in Z\lsta \ti\bV;\quad \ti\bV=h^1/h^0(E_{\tcy/\ti\cD})|_{\tcy\lgst}.
$$

We let
\beq\label{tiV-2}
\ti V_1=H^1((R\bul\pi_{\tcy\ast}\sL_{\tcy}^{\oplus 5})|_{\tcy\lgst}),\ \
\ti V_2=H^1((R\bul\pi_{\tcy\ast}\sP_{\tcy})|_{\tcy\lgst}), \ \text{and}\ \  \ti V=\ti V_1\oplus \ti V_2.
\eeq
Since both $\ti V_1$ and $\ti V_2$ are locally free on
$\tcy\lgst$, by abuse of notation, we will view them as vector bundles as well.
Then using the arrow $(R\bul\pi_{\tcy\ast}\sL_{\tcy}^{\oplus 5})|_{\tcy\lgst}\to H^1((R\bul\pi_{\tcy\ast}\sL_{\tcy}^{\oplus 5})|_{\tcy\lgst})$,
we obtain a canonical morphism $\rho_1: \ti\bV_1\to \ti V_1$; similarly, we have a canonical $\rho_2:\ti\bV_2\to \ti V_2$.
Note that both $\rho_i$ are proper, and $\ti V_i$ are (relative to $\tcy\lgst$) coarse moduli spaces of $\ti\bV_i$.
By the definition of $\ti V_i$, the sheaf of sections of $\ti V$ is $\Ob_{\tcy/\ti\cD}|_\tcygst$.
We denote by $\ti\xi$ the restriction
\beq\label{xi-2}
\ti\xi\defeq \ti\si|_\tcygst: \ti V\lra \sO_\tcygst,
\eeq
where $\ti\si$ is given in \eqref{ti-si}.

We now justify the symbols for the pair $M\sub W$.
Let $(f_{\tcX\lgst}, \pi_{\tcX\lgst}): \cC_{\tcX\lgst}\to \Pf\times {\tcX\lgst}$ be the pullback of the universal family of
$\barM_1(\Pf,d)$. Let $\sL_{\tcX\lgst}=f_{\tcX\lgst}\sta\sO(1)$ and
$\sP_{\tcX\lgst}=\sL_{\tcX\lgst}^{\otimes(-5)}\otimes \omega_{\cC_{\tcX\lgst}/\tcX\lgst}$.
Then by \cite{HL1}, $\tcy\lgst$ is the total space of a line bundle on $\tcX\lgst$:
$$L=(\pi_{\tcX\lgst})\lsta \sP_{\tcX\lgst}.
$$
Thus letting $M=\tcX\lgst$ and $W=\tcy\lgst$ fit into the description $W=\text{Total}(L)$ given
before.

We next introduce $V_i$ and $\xi_i$ on $M$ that fit the whole package as well:
\beq\label{VV}
V_1=R^1\pi_{\tcX\lgst\ast}\sL_{\tcX\lgst}^{\oplus 5},\quad
V_2=R^1\pi_{\tcX\lgst\ast}\sP_{\tcX\lgst},\and  V=V_1\oplus V_2.
\eeq
They are vector bundles (locally free sheaves) on $\tcX\lgst$($=M$).
Let
$$\gamma: W\defeq \tcy\lgst=\Tot(L)\lra  \tcX\lgst=M
$$
be the induced (tautological) projection.

\begin{lemm}
We have canonical isomorphisms $\gamma\sta V_i=\ti V_i$; further
there are $\xi_1: V_1\otimes L \to \sO_M$ and $\xi_2: V_2\to \sO_M$ so that
$\ti\xi$ in \eqref{xi-2} is identical to the $\ti\xi$ constructed from $(\xi_1,\xi_2)$ via the construction
given in \eqref{xi2} and after.
\end{lemm}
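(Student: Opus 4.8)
The plan is to read everything off the explicit shape of the cosection $\sigma$ of $\Ob_{\cY/\cD}$ constructed in \cite{CL}, together with the description $\tcy\lgst=\Tot(L)$, $L=\pi_{\tcX\lgst\ast}\sP_{\tcX\lgst}$, of \cite{HL1}. The key preliminary observation is that $\gamma:\tcy\lgst\to\tcX\lgst$ only records the choice of a $p$-field: the universal curve $\cC_{\tcy\lgst}$ is the $\gamma$-pullback of $\cC_{\tcX\lgst}$, one has $\sL_{\tcy\lgst}=\gamma_\cC\sta\sL_{\tcX\lgst}$ and $\sP_{\tcy\lgst}=\gamma_\cC\sta\sP_{\tcX\lgst}$ (with $\gamma_\cC:\cC_{\tcy\lgst}\to\cC_{\tcX\lgst}$ the induced morphism), and the universal $p$-field $\psi_{\tcy\lgst}$ is the image of the tautological section $\eps\in\Gamma(\tcy\lgst,\gamma\sta L)$ under the evaluation $\pi_{\tcy\lgst}\sta\gamma\sta L=\gamma_\cC\sta\pi_{\tcX\lgst}\sta\pi_{\tcX\lgst\ast}\sP_{\tcX\lgst}\to\gamma_\cC\sta\sP_{\tcX\lgst}=\sP_{\tcy\lgst}$. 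Since $\gamma$, being the projection of a line bundle, is smooth, flat base change along $\gamma$ applied to $V_1=R^1\pi_{\tcX\lgst\ast}\sL_{\tcX\lgst}^{\oplus5}$ and $V_2=R^1\pi_{\tcX\lgst\ast}\sP_{\tcX\lgst}$ produces the canonical isomorphisms $\gamma\sta V_i=\ti V_i$ with the locally free sheaves of \eqref{tiV-2}.

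Next I would recall from \cite{CL} that, under the identification $R^1\pi_{\cY\ast}\omega_{\cC_\cY/\cY}=\sO_\cY$ coming from $\sL_\cY^{\otimes5}\otimes\sP_\cY=\omega_{\cC_\cY/\cY}$, the cosection on $\Ob_{\cY/\cD}=R^1\pi_{\cY\ast}(\sL_\cY^{\oplus5}\oplus\sP_\cY)$ is a sum $\sigma=\sigma_1+\sigma_2$, where $\sigma_1$ is induced by the sheaf map $(\dot u_i)\mapsto\sum_i\psi_\cY\,\partial_i\bw(u_\cY)\,\dot u_i$ from $\sL_\cY^{\oplus5}$ to $\omega_{\cC_\cY/\cY}$, and $\sigma_2$ by multiplication $\dot\psi\mapsto\bw(u_\cY)\,\dot\psi$ from $\sP_\cY$ to $\omega_{\cC_\cY/\cY}$. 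Restricting to $\tcy\lgst$ and using the first paragraph: $\bw(u_{\tcy\lgst})=\gamma_\cC\sta\bw(u_{\tcX\lgst})$ is pulled back, so $\sigma_2|_{\tcy\lgst}=\gamma\sta\xi_2$, where $\xi_2:V_2\to\sO_M$ is the $R^1\pi_{\tcX\lgst\ast}$-pushforward of multiplication by $\bw(u_{\tcX\lgst})$; this is precisely the $\ti\xi_2=\gamma\sta\xi_2$ of the construction after \eqref{xi2}. For $\sigma_1$, the sheaf map $\psi_{\tcy\lgst}\,\nabla\bw(u_{\tcy\lgst}):\sL_{\tcy\lgst}^{\oplus5}\to\omega_{\cC_{\tcy\lgst}/\tcy\lgst}$ is, by the first paragraph, obtained from $\eps$ together with the $\gamma_\cC$-pullback of the composite $\sL_{\tcX\lgst}^{\oplus5}\otimes\pi_{\tcX\lgst}\sta L\to\sL_{\tcX\lgst}^{\oplus5}\otimes\sP_{\tcX\lgst}\lra\omega_{\cC_{\tcX\lgst}/\tcX\lgst}$ (first the evaluation $\pi_{\tcX\lgst}\sta L\to\sP_{\tcX\lgst}$, then $(\dot u_i)\mapsto\sum_i\partial_i\bw(u_{\tcX\lgst})\,\dot u_i$, using $\sL_{\tcX\lgst}^{\otimes5}\otimes\sP_{\tcX\lgst}=\omega_{\cC_{\tcX\lgst}/\tcX\lgst}$); defining $\xi_1:V_1\otimes L\to\sO_M$ as the $R^1\pi_{\tcX\lgst\ast}$-pushforward of this composite (via the projection formula $R^1\pi_{\tcX\lgst\ast}(\sL_{\tcX\lgst}^{\oplus5}\otimes\pi_{\tcX\lgst}\sta L)=V_1\otimes L$) makes $\sigma_1|_{\tcy\lgst}$, under $\gamma\sta V_1=\ti V_1$, equal to $\gamma\sta(\xi_1)(\,\cdot\otimes\eps\,)$, i.e.\ exactly the $\ti\xi_1$ of \eqref{xi2}. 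Summing, $\ti\xi=\ti\si|_{\tcygst}=(\ti\xi_1,\ti\xi_2)$ as in \eqref{xi-2} is the cosection built from $(\xi_1,\xi_2)$ by the recipe of \eqref{xi2} and after.

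Finally I would verify that $\xi_1$ is surjective, as the preceding general set-up requires. This is a fiberwise statement at a point of $\tcX\lgst$ whose domain is $C=E\cup R_1\cup\cdots\cup R_\ell$ with $E$ a smooth genus one curve contracted by the map to a point $q\in\Pf$: since $Q$ is smooth we have $\nabla\bw(q)\ne0$, so for $\psi$ a generator of the one-dimensional $H^0(C,\sP|_C)$ the sheaf map $\psi\,\nabla\bw(q):(\sL|_C)^{\oplus5}\to\omega_C$ has image the subsheaf $\psi\cdot\sO_C\subset\omega_C$, whose quotient is a torsion sheaf of length $\ell$ supported at the nodes; from the resulting short exact sequence one reads off that $H^1(C,\psi\sO_C)\to H^1(C,\omega_C)=\CC$ is onto, hence so is $H^1(C,(\sL|_C)^{\oplus5})\to H^1(C,\omega_C)$, which is $\xi_1$ at that point. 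The main obstacle is bookkeeping rather than substance: one must match the cup-product/Serre-duality description of $\sigma$ in \cite{CL} with the evaluation-map manipulations above, i.e.\ check that ``substituting $\psi=\psi_{\tcy\lgst}$'' in $\sigma_1$ is literally the pairing $(\,\cdot\otimes\eps\,)$ of \eqref{xi2}. Once the pullback description of the universal family in the first paragraph is in place this is a formal diagram chase, needing no input beyond \cite{CL} and \cite{HL1}.
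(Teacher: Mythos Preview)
Your proof is correct and follows essentially the same approach as the paper: both establish $\gamma\sta V_i\cong\ti V_i$ by base change along $\gamma$, then identify $\xi_1$ and $\xi_2$ from the explicit formula for the cosection $\sigma$ in \cite{CL}. The paper works pointwise and simply cites \cite[Prop 3.4]{CL} for the surjectivity of $\xi_1$, whereas you phrase everything at the sheaf level and supply your own fiberwise argument for surjectivity. That argument is sound in spirit, but as written it only treats closed points whose domain has the shape $C=E\cup R_1\cup\cdots\cup R_\ell$ with $E$ a \emph{smooth} elliptic curve; since surjectivity must hold at every closed point of $\tcX\lgst$, you should remark that the same reasoning (nonvanishing of $\nabla\bw$ everywhere on $\Pf$, the $p$-field $\psi$ vanishing on the tails and trivializing $\omega$ on the contracted genus-one part, and $H^1$ being supported there) goes through unchanged when the principal part is nodal or carries further contracted rational components.
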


\begin{proof}
We let
$f_{\tcy\lgst}, \pi_{\tcy\lgst}, \cC_{\tcy\lgst}, \sL_{\tcy\lgst}$ and $\sP_{\tcy\lgst}$ be objects over
$\tcy\lgst$ defined similarly as that over $\tcX\lgst$. Then the $\ti V_1$ and $\ti V_2$ in \eqref{tiV-2}
are
$\ti V_1=R^1\pi_{\tcy\lgst\ast}\sL_{\tcy\lgst}^{\oplus 5}$ and $\ti V_2=R^1\pi_{\tcy\lgst\ast}\sP_{\tcy\lgst}$.
On the other hand, since we have canonical isomorphism
$\cC_{\tcy\lgst}\cong \cC_{\tcX\lgst}\times_{\tcX\lgst}\tcy\lgst$, letting $\ti\gamma: \cC_{\tcy\lgst}\to \cC_{\tcX\lgst}$
be the induced projection, then we obtain $\ti\gamma\sta\sL_{\tcX\lgst}= \sL_{\tcy\lgst}$,
and same for $\sP_{\tcy\lgst}$. Applying the base change formula, (since $R^2\pi_{\tcX\lgst\ast}=0$,)
we obtain the canonical isomorphisms
$$\ti V_1= R^1\pi_{\tcy\lgst\ast}\sL_{\tcy\lgst}^{\oplus 5}
= R^1\pi_{\tcy\lgst\ast}\ti\gamma\sta \sL_{\tcX\lgst}^{\oplus 5}
\cong \gamma\sta R^1\pi_{\tcX\lgst\ast}\sL_{\tcX\lgst}^{\oplus 5}=\gamma\sta V_1;
$$
the same reason gives a canonical $\ti V_2\cong\gamma\sta V_2$.

For the $\ti\xi$ defined in \eqref{xi-2}, we recall the construction of the cosection $\si$ in \eqref{si0}.
For any closed $y\in\tcy\lgst$ over $x\in \tcX\lgst$ associated to $[u,C]$ with $u=(u_i)\in H^0(u\sta\sO(1)^{\oplus 5})$ and
$\psi\in H^0(u\sta\sO(-5)\otimes\omega_C)$, $\ti V_1|_y=H^1(u\sta\sO(1)^{\oplus 5})$,
$\ti V_2|_y=H^1(u\sta\sO(-5)\otimes\omega_C)$, and
$$\ti\si(y): \ti V_1|_y\oplus \ti V_2|_y\lra \CC;\quad  (\dot u_i, \dot \psi)\mapsto 5\psi\sum_{i=1}^5 u_i^4 \dot u_i+\dot\psi\sum_{i=1}^5 u_i^5.
$$
Accordingly, we define
$\xi_1: V_1\otimes L\to\sO_{\tcX\lgst}$ and $\xi_2: V_2\to\sO_{\tcX\lgst}$ over $x\in \tcX\lgst$ to be
$$\si_1(x)((\dot u_i)\otimes\psi)=5\psi\sum u_i^4 \dot u_i,\and \xi_2(x)(\dot\psi)=\dot\psi\sum u_i^5.
$$
With this definition, by \cite[Prop 3.4]{CL}, $\xi_1$ is surjective. As $\ti\xi$ is constructed following
\eqref{xi-2} that is the restriction of $\ti\si$ over $\tcy\lgst$, this proves the Lemma.
\end{proof}
%

In the following, we will view $V_i$, $\ti V_i$ and $\ti\xi$ be defined in \eqref{tiV-2}, \eqref{xi-2} and \eqref{VV},
which also satisfy the properties specified at and before \eqref{xi2}.

Following \cite{CL}, the non-surjective locus
$D(\ti\xi)$  of $\ti\xi=\ti\si|_{\tcy\lgst}$ is
$$D(\ti\si)\times_{\tcX}\tcX\lgst=\barM_1(Q,d)\times_{\barM_1(\Pf,d)}\tcX\lgst,
$$
which is proper.
As before, we form the substack
$\ti V(\ti\xi)\sub \ti V$; we let $C\lgst$ be the coarse moduli of $\bC\lgst$ relative to $\ti\cY\lgst$,
thus $C\lgst\sub \ti V$ since $\ti V$ is the coarse moduli of
$\ti\bV$.  Further, since the projection $\rho\defeq \rho_1\times\rho_2: \ti\bV\to \ti V$
is smooth, we have an identity of cycles
$\rho^\ast[C\lgst]= [\bcgst]\in Z\lsta \ti \bV$. Finally, because $[\bC\lgst]\in Z\lsta \ti\bV(\ti\si)$ (cf. \cite{CL}), we have
$$[C\lgst]\in Z\lsta \ti V(\ti\xi).
$$
As before,
$0^!_{\ti \xi,\loc}: A\lsta \ti V(\ti \xi) \to A\lsta D(\ti \xi)$
is the localized Gysin map.

\begin{prop}
We have the  identity
$$0^!_{\ti\si,\loc}[\bcgst]=0^!_{\ti \xi,\loc}[C\lgst]\in A\lsta D(\ti \xi). 
$$
\end{prop}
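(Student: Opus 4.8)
The plan is to reduce the identity to the functoriality of cosection-localized Gysin maps under smooth morphisms, applied to the smooth projection $\rho=\rho_1\times\rho_2:\ti\bV\to\ti V$ that realizes $\ti V$ as the coarse moduli of the bundle stack $\ti\bV=h^1/h^0(E_{\tcy/\ti\cD})|_{\tcygst}$ relative to $\tcygst$. First I would recall that by construction $\ti\si|_{\tcygst}$, viewed as a cosection of the bundle stack $\ti\bV$, is the pullback under $\rho$ of the cosection $\ti\xi$ of the bundle $\ti V$; this is exactly what the previous Lemma establishes, since $\ti\xi=\ti\si|_{\tcygst}$ is literally constructed from $(\xi_1,\xi_2)$ on $\tcX\lgst$ via the total-space construction, and the fibrewise formula $(\dot u_i,\dot\psi)\mapsto 5\psi\sum u_i^4\dot u_i+\dot\psi\sum u_i^5$ factors through $H^1$. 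Hence the degeneracy loci agree, $D(\ti\si|_{\tcygst})=D(\ti\xi)$, and the localized substacks match: $\rho^{-1}(\ti V(\ti\xi))=\ti\bV(\ti\si|_{\tcygst})$.

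Next I would invoke the compatibility of cosection-localized Gysin maps with smooth (flat) pullback. The general statement, which follows from \cite{KL} together with the flat-pullback compatibility of ordinary Gysin maps and the deformation-to-the-normal-cone construction underlying $0^!_{\loc}$, is that for a smooth morphism $\rho$ of bundle stacks covering the identity on the degeneracy locus one has $\rho^\ast\circ 0^!_{\ti\xi,\loc}=0^!_{\rho^\ast\ti\xi,\loc}\circ\rho^\ast$ on cycle groups, and since $\rho$ restricted over the proper $D(\ti\xi)$ is an isomorphism (the obstruction sheaf there has no stacky automorphisms beyond those already in $\tcygst$), pushing forward recovers $0^!_{\ti\xi,\loc}[C\lgst]$ from $0^!_{\ti\si,\loc}[\bcgst]$. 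Concretely: we established earlier in the excerpt that $\rho^\ast[C\lgst]=[\bcgst]\in Z_\ast\ti\bV$; applying $0^!_{\ti\si|_{\tcygst},\loc}$ to both sides and using the commutativity above gives $0^!_{\ti\si,\loc}[\bcgst]=0^!_{\ti\si,\loc}\rho^\ast[C\lgst]=\rho^\ast 0^!_{\ti\xi,\loc}[C\lgst]$, and then identifying $\rho^\ast$ over $D(\ti\xi)$ with the identity (or rather, using that $\rho$ is an isomorphism over $D(\ti\xi)$ so that $\rho^\ast$ on $A_\ast D(\ti\xi)$ is an isomorphism which we then invert) yields the claimed equality.

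I would also remark that $[\bC\lgst]\in Z_\ast\ti\bV(\ti\si)$ was already recorded in the excerpt (following \cite{CL}), which is what guarantees both sides of the asserted identity are defined; and that the equality $\ti\bV=h^1/h^0(E_{\tcy/\ti\cD})|_{\tcygst}$ from the base-change property of $h^1/h^0$ ensures $0^!_{\ti\si,\loc}[\bcgst]$ here is the same class appearing in \eqref{split-0}.

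The main obstacle I anticipate is the precise bookkeeping of the cosection-localized Gysin map under the smooth morphism $\rho$ between bundle \emph{stacks}: one must check that $\rho$ being smooth of the expected relative dimension means that $\rho^\ast$ commutes with $0^!_{\loc}$ up to the relative dimension shift, and that over the degeneracy locus $D(\ti\xi)$ — where the cosection fails to be surjective — the map $\rho$ genuinely restricts to an isomorphism of the relevant bundle stacks so no spurious automorphism factors appear. This is where I would lean on \cite[Prop 1.3]{KL} and the coarse-moduli-space property of $\ti V_i\to\ti\bV_i$ stated just before the Proposition; granting those, the argument is a short formal chase and I would not expect to grind through any computation.
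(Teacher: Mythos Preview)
Your core idea---use $\rho^\ast[C\lgst]=[\bcgst]$ and the compatibility of cosection-localized Gysin maps under the smooth projection $\rho:\ti\bV\to\ti V$---is exactly what the paper's proof does. But your execution contains a confusion: $\rho$ covers the \emph{identity} on the base $\tcygst$, so the targets of both localized Gysin maps are literally the same group $A_\ast D(\ti\xi)$, and there is nothing to ``invert''. Your assertion that ``$\rho$ restricted over $D(\ti\xi)$ is an isomorphism'' is also false as a statement about bundle stacks---over every point the fibre of $\rho$ is a quotient by the $H^0$-part, not a point---and in any case it is irrelevant. The correct formula is simply $0^!_{\ti\si,\loc}\circ\rho^\ast=0^!_{\ti\xi,\loc}$ as maps $Z_\ast\ti V(\ti\xi)\to A_\ast D(\ti\xi)$; there is no $\rho^\ast$ on the left of the left-hand side. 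Your invocation of \cite[Prop.~1.3]{KL} is also off: that proposition compares the localized Gysin map to the ordinary one via pushforward, not to smooth pullback.

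The paper sidesteps this entirely by choosing a vector bundle $F$ on $\tcygst$ with a surjection $\phi:F\to\ti\bV$. Then both $\ti\bV$ and $\ti V$ receive smooth surjections from the same honest vector bundle $F$ (via $\phi$ and $\bar\phi=\rho\circ\phi$), and the localized Gysin map on a bundle stack is by definition computed after such a lift. Since $\phi^\ast[\bcgst]=\bar\phi^\ast[C\lgst]$ in $Z_\ast F$ and $\ti\si_\phi=\bar\phi^\ast\ti\xi$, both sides equal $0^!_{\ti\si_\phi,\loc}(\phi^\ast[\bcgst])$. This is a three-line argument once you introduce $F$; I would recommend you rewrite your proof this way rather than trying to state a general smooth-pullback compatibility for morphisms between bundle stacks.
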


\begin{proof}
We can find a vector bundle $F$ on $\tcygst$ and a surjective bundle map $\phi: F\to \ti\bV$.
Let $\ti\si_\phi: F\to\sO_{\tcygst}$ be the pullback cosection. Then
$D(\ti\si_\phi)=D(\ti \xi)$, $\phi\sta[\bcgst]\in F(\ti\si_\phi)$,
and
$$0^!_{\ti\si_\phi,\loc}(\phi\sta[\bcgst])=0^!_{\ti\si,\loc}[\bcgst]\in A\lsta D(\ti \xi).
$$
On the other hand, let $\bar\phi: F\to \ti V$ be the composition of $\phi$ with $\rho$, then
$\ti\si_\phi=\bar\phi\sta(\ti \xi)$ and $\bar\phi\sta[C\lgst]=\phi\sta[\bcgst]$.
Thus
$$0^!_{\ti\si_\phi,\loc}(\phi\sta[\bcgst])=0^!_{\bar\phi\sta(\ti \xi),\loc}(\bar\phi\sta[C\lgst])
=0^!_{\ti \xi,\loc}[C\lgst] \in A\lsta D(\ti \xi).
$$
This proves the Proposition.
\end{proof}

Finally, we verify

\begin{lemm}\label{5.7}
The substack $C\lgst\sub \ti V$ is homogeneous of weight $(0,1)$.
\end{lemm}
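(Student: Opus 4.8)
The statement being local on $\tcX\lgst$, the plan is to work in the coordinates of Proposition \ref{coordinate} and to verify the homogeneity on generators. Recall that, with $M=\tcX\lgst$ and $W=\tcy\lgst=\Tot(L)$, $L=\pi_{\tcX\lgst\ast}\sP_{\tcX\lgst}$, the dilation $\fm_t\colon W\to W$ of \eqref{mt} is nothing but the $\CC^\ast$-action rescaling the $p$-field: it lifts to the tautological family $(\cC_W,f_W,\psi_W)$ over $W$ in such a way that $\cC_W$, the morphism $f_W$, and hence $\sL_W=f_W^\ast\sO(1)$ and $\omega_{\cC_W/W}$ all carry the trivial linearization, while the tautological section $\psi_W$ transforms with weight one, $\fm_t^\ast\psi_W=t\,\psi_W$. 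The first step is to record this and, in the trivializations of Proposition \ref{coordinate}, to name the fibre coordinates: the coordinates $z_1,\ldots,z_4,t$ of $W$ over $\ti\cD$, with $t$ the coordinate of the $L$-factor (which $\fm_t$ scales with weight one); the fibre coordinates of $\ti V_1=\gamma^\ast V_1=R^1\pi_{W\ast}\sL_W^{\oplus5}$, the ``$\dot u_i$''-directions; and those of $\ti V_2=\gamma^\ast V_2=R^1\pi_{W\ast}\sP_W$, the ``$\dot\psi$''-direction.

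Next I would run the derivation of the local defining equations exactly as in the proof of Proposition \ref{coordinate}: present the two summands of $R^\bullet\pi_{W\ast}(\sL_W^{\oplus5}\oplus\sP_W)=E_{\tcy/\ti\cD}|_{\tcy\lgst}$ as two-term complexes of free sheaves, the $R^\bullet\pi_{W\ast}\sP_W$-summand being obtained via Serre duality as there, so that $\tcy$, as a direct-image cone over $\ti\cD$, is cut out by equations that are linear in those fibre coordinates. The point to extract is the $\CC^\ast$-bookkeeping: because $\psi$ enters the direct-image cone construction as a \emph{section} of $\sP_W$, a point of the relevant total space maps to that section linearly, so the weight-one dilation of $\psi$ (equivalently, of $t$) lifts to the linear dilation of that total space; hence the equations cutting out $\tcy$ are homogeneous, of weight $0$ in the $\ti V_1$-directions and of weight $1$ in the $\ti V_2$-direction, and the obstruction theory $\phi_{\tcy/\ti\cD}\colon E_{\tcy/\ti\cD}^\vee\to L^\bullet_{\tcy/\ti\cD}$ becomes $\CC^\ast$-equivariant once the $\sL_W^{\oplus5}$-part of $E_{\tcy/\ti\cD}|_{\tcy\lgst}$ is given weight $0$ and the $\sP_W$-part weight $1$. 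Passing to the intrinsic normal cone and to $h^1/h^0(E_{\tcy/\ti\cD})$, one reads off that the induced $\CC^\ast$-action on $h^1/h^0(E_{\tcy/\ti\cD})|_{\tcy\lgst}$, hence on $\ti V=\ti V_1\oplus\ti V_2$ after passing to coarse moduli, is exactly $\Phi_{1,0}(t)$ on the first factor and $\Phi_{2,1}(t)$ on the second.

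Finally, $\fm_t$ extends (again by rescaling the $p$-field) to a $\CC^\ast$-action on all of $\tcy$ over $\ti\cD$; the whole construction is $\CC^\ast$-equivariant, so the intrinsic normal cone $\bC_{\tcy/\ti\cD}\subset h^1/h^0(E_{\tcy/\ti\cD})$ is $\CC^\ast$-invariant, and since $\CC^\ast$ is connected it preserves each of its irreducible components. Hence $[\bcgst]=\sum_{\mu\vdash d}[\bC_\mu]$ and its image $C\lgst\subset\ti V$ are $\CC^\ast$-invariant, which by the previous paragraph is precisely the assertion that $C\lgst$ is homogeneous of weight $(0,1)$ in the sense of Definition \ref{def-weight} --- the components of $C\lgst$ lying over $\ti\Delta$ being included automatically, since they too are components of the invariant cycle $\bC_{\tcy/\ti\cD}$. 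The step I expect to be the real obstacle is the middle one: checking that the $\CC^\ast$-linearization propagates through the obstruction theory with exactly weight $\Phi_{2,1}$ on the $\ti V_2$-direction (and $\Phi_{1,0}$, not some other $\Phi_{1,k}$, on $\ti V_1$). This is where one genuinely needs the explicit resolutions of $R^\bullet\pi_{W\ast}\sL_W^{\oplus5}$ and, via Serre duality, of $R^\bullet\pi_{W\ast}\sP_W$ from the proof of Proposition \ref{coordinate}, together with the description of the cosection $\ti\xi=(\ti\xi_1,\ti\xi_2)$ via \eqref{xi2}, which already displays $\ti\xi_1$ as carrying one factor of the weight-one coordinate $t$ and $\ti\xi_2$ as carrying none --- a useful consistency check that $(0,1)$ is the only possible weight. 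Once the linearization is pinned down, homogeneity of every generator of the ideal of $C\lgst$, hence of $C\lgst$ itself, is immediate.
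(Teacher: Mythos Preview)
Your overall strategy is correct and matches the paper's: introduce the $\CC^\ast$-action on $\tcy$ that rescales the $p$-field, argue that the relative obstruction theory $\phi_{\tcy/\ti\cD}$ is equivariant with weights $(0,1)$ on the two summands of $E_{\tcy/\ti\cD}$, and deduce that the intrinsic normal cone, hence $C\lgst$, is invariant. The difference is in how the middle step --- the one you correctly identify as the obstacle --- is carried out.

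The paper does not work in the local coordinates of Proposition \ref{coordinate}. Instead it argues globally on the ambient direct-image cone $\fS=C(\pi\lsta(\sL^{\oplus5}\oplus\sP))$ over $\cD$, where the $\CC^\ast$-action $\ti\fa_t$ sends $(\cC_\tau,\sL_\tau,u_i,\psi)$ to $(\cC_\tau,\sL_\tau,u_i,t\psi)$. The key device is to compare \emph{two} linearizations on $\sL_\fS^{\oplus5}\oplus\sP_\fS$: the ``pullback'' linearization $(\varphi_1,\varphi_2)$ coming from $\cD$ (both trivial), and the ``tautological'' linearization $(\bar\varphi_1,\bar\varphi_2)$ characterized by making the universal section $(u_{\fS,i},\psi_\fS)$ equivariant. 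One checks directly that $\bar\varphi_1=\varphi_1$ and $\bar\varphi_2=t\cdot\varphi_2$. Because the obstruction theory of \cite[Prop.~2.5]{CL} is constructed from the universal section, it is equivariant for $(\bar\varphi_1,\bar\varphi_2)$; pulling back along $\ti\jmath:\tcy\to\fS$ and restricting to $\tcy\lgst$, this translates to exactly the linearization $(\Phi_{1,0},\Phi_{2,1})$ on $\ti V_1\oplus\ti V_2$.

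Your local route via explicit resolutions would also succeed, but what you would be verifying in coordinates is precisely the relation $\bar\varphi_2=t\cdot\varphi_2$, which the paper isolates cleanly and globally. The consistency check via the shape of $\ti\xi_1,\ti\xi_2$ that you mention is suggestive but not a proof; the two-linearization argument explains directly \emph{why} the weight on $\ti V_2$ is $1$: it is the discrepancy between the linearization under which $\psi$ is a pullback and the one under which $\psi$ is the universal section.
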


\begin{proof}
As the obstruction theory of  $\tcy\to\ti\cD$ is the pullback of that of $\cY\to\cD$, and the later is
via the open $\cD$-embedding (cf. \eqref{dir-C})
\beq\label{j}\jmath: \cY=\barM_1(\Pf,d)^p\mapright{\sub} \fS\defeq C(\pi\lsta(\sL^{\oplus 5}\oplus \sP)),
\eeq
we will prove a corresponding result for $\fS$.

Let $\pi_\fS: \cC_\fS\to \fS$, $\sL_\fS$ and $\sP_\fS$ be the pullback of $(\pi: \cC\to\cD, \sL, \sP)$.
The modular construction of $\fS$ provides us a universal section
\beq\label{u-section} (u_{\fS,i},\psi_{\fS})
\in \Gamma(\cC_\fS, \sL_\fS^{\oplus 5}\oplus \sP_\fS)=
\Gamma(\fS, \pi_{\fS\ast}(\sL_\fS^{\oplus 5}\oplus \sP_\fS)), \quad (i=1,\cdots, 5).
\eeq
Namely, each closed $\xi=(\cC_\xi,u_{\xi,i},\psi_\xi)\in\fS$ has $u_{\fS,i}|_\xi\equiv u_{\xi,i}$ and
$\psi_\fS|_\xi\equiv \psi_\xi$.

For $t\in\CC\sta$, we introduce an automorphism
$$\fa_t: \sL^{\oplus 5}\oplus \sP\lra \sL^{\oplus 5}\oplus \sP
$$
that is $\id_{\sL^{\oplus 5}}$ when restricted to the summand $\sL^{\oplus 5}$, and is $t\cdot \id_{\sP}$ when restricted to
the summand $\sP$.  Clearly, $\fa_t$ commutes with the arrows of $\cD$,  and thus is a well-defined automorphism.
The collection $\{\fa_t\mid t\in\CC\sta\}$ defines a $\CC\sta$-action on $\sL^{\oplus 5}\oplus \sP$,
where $\CC\sta$ acts trivially on $\cD$.

Following the moduli construction of $\fS$, the automorphism $\fa_t$ induces a $\cD$-automorphism $\ti\fa_t: \fS\to\fS$: it
is defined by sending $(\cC_\tau,\sL_\tau, u_i,\psi)\in \fS(T)$, where $T$ is any scheme, to $(\cC_\tau,\sL_\tau, u_i,\psi)^t=(\cC_\tau,\sL_\tau, u_i,t\cdot \psi)$.
The collection $\{\ti\fa_t\mid t\in\CC\sta\}$ forms a $\CC\sta$-action on $\fS$; under this action the tautological projection
$\fS\to\cD$ is $\CC\sta$-equivariant.

Because $\cC_\fS=\cC\times_\cD\fS$, and because $\fS\to\cD$ is $\CC\sta$-equivariant (via $\ti\fa_t$ on $\fS$
and the trivial $\CC\sta$-action on $\cD$), the trivial $\CC\sta$-action on $\cC$ and the $\ti\fa_t$ on $\fS$ lifts to
a $\CC\sta$-action on $\cC_\fS\to\fS$. We denote this action by $\varphi_0(t): \cC_\fS\to \ti\fa_t\sta  \cC_\fS$.
Then since $\sL_\fS$ and $\sP_\fS$ are pullback of $\sL$ and $\sP$ on $\cC$, $\sL_\fS$ and $\sP_\fS$
admit the obvious $\CC\sta$-linearizations
\beq\label{action0}
 \varphi_1(t): \sL_\fS\lra  \varphi_0(t)\sta  \sL_\fS,\quad
\varphi_2(t): \sP_\fS\lra  \varphi_0(t)\sta  \sP_\fS
\eeq
characterized by that their $\CC\sta$-equivariant sections are pullback sections of $\sL$ and $\sP$, respectively.


The action $\ti\fa_t$ induces a tautological $\CC\sta$-action (linearlization)
\beq\label{action1}
\bar\varphi_1(t): \sL_\fS\to  \varphi_0(t)\sta  \sL_\fS,\quad
\bar\varphi_2(t): \sP_\fS\to  \varphi_0(t)\sta  \sP_\fS
\eeq
characterized by that the universal section \eqref{u-section} is $\CC\sta$-equivariant under this
linearization; i.e.,
\beq\label{action-eq}
(\bar\varphi_1(t)(u_{\fS,i}), \bar\varphi_2(t)(\psi_{\fS}))=(\varphi_0(t)\sta(u_{\fS,i}),\varphi_0(t)\sta(\psi_{\fS})).
\eeq
By the construction of $\ti\fa_t$ and the two $\CC\sta$-actions, we see that
$$\bar\varphi_1(t)=\varphi_1(t)\and \bar\varphi_2(t)=t\cdot \varphi_2(t).
$$

Let $\phi_{\fS/\cD}: L\bul_{\fS/\cD}\to (E_{\fS/\cD})\dual$ be the perfect relative obstruction theory constructed
in \cite{CL} by use of $(u_{\fS,i},\psi_{\fS})$.  (Its pullback to $\cY$ via $\cY\to\fS$ is the relative perfect obstruction theory of $\cY\to\cD$.)
Tracing through the construction of the obstruction theory $\phi_{\fS/\cD}$ in
\cite[Prop 2.5]{CL}, using that the universal section $(u_{\fS,i},\psi_{\fS})$ is equivariant under the
tautological $\CC\sta$-linearization
$(\bar\varphi_1,\bar\varphi_2)$, we conclude that the obstruction theory $\phi_{\fS/\cD}$ is $\CC\sta$-equivariant
with respect to the same $\CC\sta$-linearization.

We consider the composite
$$ \ti\jmath: \tcy\lra \cY\mapright{\jmath} \fS.
$$
of the tautological $\tcy\to\cY$ with the open embedding $\jmath$ in \eqref{j}.
Since the obstruction theory of $\tcy\to\ti\cD$ is the pullback of that of $\fS\to\cD$, we have canonical
isomorphism
\beq\label{BB} \ti\jmath\sta E_{\fS/\cD}\mapright{\cong} E_{\tcy/\ti\cD}.
\eeq
Secondly, since $\ti\jmath$ is a $\cD$-morphism and $\tcy$ is constructed from $\cY\to\cD$ via a base change
$\ti\cD\to\cD$. the $\CC\sta$-action on $\fS$ lifts to a
unique $\CC\sta$-action on
$\tcy$ that makes $\ti\jmath$ $\CC\sta$-equivariant. Using \eqref{BB}, we endow $E_{\tcy/\ti\cD}$ the
$\CC\sta$-linearlization induced based on $(\bar\varphi_1,\bar\varphi_2)$. This way,
since $\phi_{\fS/\cD}$ is $\CC\sta$-equivariant (via $(\bar\varphi_1,\bar\varphi_2)$), we conclude
that $\phi_{\tcy/\ti\cD}$ is $\CC\sta$-equivariant with the $\CC\sta$-linearlization just introduced.

Finally, it is direct to check that the introduced $\CC\sta$-action on $\tcy$ restricting to $\tcy\lgst$
is the $\CC\sta$-action $\fm_t$ constructed in \eqref{mt}; also, by \cite[Prop 2.5]{CL},
$E_{\fS/\cD}=R\bul\pi_{\fS\ast}(\sL_\fS^{\oplus 5}\oplus \sP_\fS)$, and
the introduced $\CC\sta$-linearization on $E_{\tcy/\ti\cD}\cong \ti\jmath\sta E_{\fS/\cD}$
restricting to $\tcy\lgst$ gives the linearization $(\Phi_{1,0}, \Phi_{2,1})$ of
$\ti V_1\oplus\ti V_2$, in the notation of \eqref{Phi12}.

Since $\phi_{\tcy/\ti\cD}$ is $\CC\sta$-equivariant, the intrinsic normal cone
$\bC_{\tcy/\ti\cD}\sub h^1/h^1(E_{\tcy/\ti\cD})$ is $\CC\sta$-equivariant; therefore
$C\lgst$ is $\CC\sta$-equivariant under the $\CC\sta$-action $(\Phi_{1,0}, \Phi_{2,1})$,
which is equivalent to say that $C\lgst$ is homogeneous of weight $(0,1)$.
This proves the Lemma.
\end{proof}

\section{Proof of Proposition \ref{prop2}}

We study the cone $C\lgst\sub \ti V$ in this section.
Let
$\Delta=\tcX\lpri\cap\tcX\lgst$, and recall
$$\ti\Delta=\tcy\times_{\tcX}\Delta =\tcY\lgst\times_{\tcX\lgst}\Delta=W\times_M \Delta
$$
is a line bundle over $\Delta$. We continue to denote by $\gamma: \tcy\lgst\to\tcX\lgst$ the projection.

\begin{prop}\label{C-gst}
We have 
$C\lgst \cap (0\oplus \ti V_2)|_{\tcy\lgst-\ti\Delta}=\tcy\lgst-\ti\Delta \sub \tcy\lgst$;
further, there is a sub-line bundle $\cF \sub V_{2}|_{\Delta}$ so that, denoting
$\ti \cF=\gamma\sta \cF\sub \ti V_{2}|_{\ti\Delta}$,
$$C\lgst\cap (0\oplus \ti V_2)|_{\ti\Delta} \sub \ti\cF
\sub \ti V_2|_{\ti\Delta}.
$$
\end{prop}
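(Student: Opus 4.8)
The plan is to combine the local picture of Proposition~\ref{coordinate}, the structure of the cone obtained in Lemma~\ref{cone1}, and the $\CC\sta$-homogeneity proved in Lemma~\ref{5.7}.

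Recall $\tcy\lgst=W=\Tot(L)$ sits over $M=\tcX\lgst$ via $\gamma$, that $\ti V_i=\gamma\sta V_i$, and that (by the proof of Lemma~\ref{cone1}(2)) on $\tcy-\tcy\lpri$ the morphism $p$ is smooth onto a regularly embedded codimension-two substack $\fZ=p(\tcy-\tcy\lpri)\sub\ti\cD$; thus on this open set $C\lgst$ is the image of the rank-two bundle $g\sta N_{\fZ/\ti\cD}$ under the relative obstruction map $N_{\fZ/\ti\cD}\to \Ob_{\tcy/\ti\cD}=\ti V_1\oplus\ti V_2$, where $g$ is the (smooth) induced morphism $\tcy-\tcy\lpri\to\fZ$. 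The first step is to identify this image. The bundle $N_{\fZ/\ti\cD}$ decomposes into a ``$\mathrm{Pic}^0$ of the minimal elliptic curve $E$'' direction and a complementary ``node-smoothing'' direction. Under the obstruction map the first direction goes, in its $\ti V_1=R\bul\pi_{\tcy\ast}\sL_\tcy^{\oplus5}$-component, to the line spanned by $(u_1|_E,\dots,u_5|_E)$, which is nonzero because $u|_E$ is a point of $\Pf$; its $\ti V_2$-component plays no role. For the node-smoothing direction one uses the explicit resolution of $R\bul\pi_{\tcy\ast}(\sL_\tcy^{\oplus5}\oplus\sP_\tcy)$ recalled in the proof of Proposition~\ref{coordinate} (equivalently, the Vakil--Zinger/Zinger analysis of the local equations of $\cX$, carried over to $\cY$): the section of $\sL_\tcy$ that extends across the node vanishes at a point of the rational tails and not along $E$, so the $\ti V_1$-component of this direction is not proportional to $(u_i|_E)_i$. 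Hence the composite $C\lgst|_{\tcy-\tcy\lpri}\hookrightarrow\ti V_1\oplus\ti V_2\to\ti V_1$ is fibrewise injective, so $C\lgst\cap(0\oplus\ti V_2)$ is the zero section over $\tcy-\tcy\lpri$; since $\tcy\lgst-\ti\Delta\sub\tcy-\tcy\lpri$, this is the first assertion.

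For the second assertion one must treat $C\lgst$ over the locus not covered by Lemma~\ref{cone1}(2), namely the zero section $\Delta\hookrightarrow\tcy\lgst$, where $C\lgst$ also carries the extra components $\bC_\mu\dpri$. The plan is to read off, from the local defining equations of $\tcy$ near $\Delta$ in Proposition~\ref{coordinate} and the local resolution above, an explicit presentation of $C\lgst$ as a cone in $\ti V_1\oplus\ti V_2$, and to check that the locus of $C\lgst$ on which the $\ti V_1$-component vanishes is contained in a single line in the $\ti V_2$-coordinates; this line is the fibre of $\cF$. To see that the line is pulled back from $\Delta$, so that $\ti\cF=\gamma\sta\cF$, one invokes Lemma~\ref{5.7}: since $\Delta\sub 0_W$ is fixed by the dilation $\CC\sta$-action and $C\lgst$ is homogeneous of weight $(0,1)$, the restriction $C\lgst|_\Delta$ is invariant under the $\CC\sta$ acting with weight $0$ on $\ti V_1$ and weight $1$ on $\ti V_2$; being also a cone, it is invariant under the full two-dimensional torus, so its ideal is separately homogeneous in the $\ti V_1$- and the $\ti V_2$-coordinates, and the line cut out by setting the $\ti V_1$-coordinates to zero is defined by equations coming from $\Delta$ alone. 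Over $\ti\Delta-\Delta$ the first assertion already forces the intersection to be the zero section, so altogether $C\lgst\cap(0\oplus\ti V_2)|_{\ti\Delta}\sub\ti\cF\sub\ti V_2|_{\ti\Delta}$ with $\cF\sub V_2|_\Delta$ a sub-line bundle.

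The main obstacle is the explicit description, in the coordinates of Proposition~\ref{coordinate}, of the intrinsic normal cone $\bC_{\tcy\lgst/\ti\cD}$ together with all of its components over $\Delta$, and of the obstruction map $N_{\fZ/\ti\cD}\to\ti V_1\oplus\ti V_2$ away from $\Delta$; this is precisely where the genus-one degeneracy enters, and it is the analogue for the moduli with $p$-fields of the delicate local analysis carried out for $\cX$ in \cite{VZ,Z,HL1}. Once that is available, both assertions follow from the linear-algebra observations above together with routine bookkeeping for the $h^1/h^0$-construction and the $\CC\sta$-action.
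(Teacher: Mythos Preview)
Your approach is genuinely different from the paper's, and the difference matters precisely at the point you flag as the ``main obstacle''.

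The paper does \emph{not} compute $\bC_{\tcy/\ti\cD}$ (or its intersection with $0\oplus\ti V_2$) directly. Instead it uses the intermediate stack $\tcX$ in the factorization $\tcy\to\tcX\to\ti\cD$ and the associated compatible triangle of obstruction theories. Taking $H^1$ of the duals at a closed point $y$ gives a commutative ladder whose bottom row is $\ti V_2|_y\to(\ti V_1\oplus\ti V_2)|_y\to\ti V_1|_y$ and whose top row comes from the cotangent complexes of $\tcy/\tcX$, $\tcy/\ti\cD$, $\tcX/\ti\cD$; all vertical arrows are injective. Since $C\lgst|_y$ sits in the middle term of the top row, a diagram chase shows
\[
C\lgst\cap(0\oplus\ti V_2)\ \sub\ \cN\defeq\text{coarse moduli of } h^1/h^0((L\bul_{\tcy/\tcX})\dual)|_{\tcy\lgst}\ \sub\ \ti V_2.
\]
This reduces the whole proposition to computing $\cN$, and \emph{that} is easy: by Proposition~\ref{coordinate} the ideal of $\ti Y$ in $Z'_{\ti X}=\ti X\times\Ao$ is $I=(wt)$, so over $\ti Y\lgst=(w=0)$ one finds $I/I^2\otimes\sO_{\ti Y\lgst}\cong\sO_{\ti Y_\Delta}$; hence $\cN$ is the zero section union a line bundle $\ti\cF$ over $\ti\Delta$. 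The descent of $\ti\cF$ to $\cF\sub V_2|_\Delta$ then uses the $\CC\sta$-equivariance as you suggest. The point is that the local equations of $\tcy$ \emph{relative to $\tcX$} are trivial to write down, whereas those relative to $\ti\cD$ (which is what your plan requires) are not available in Proposition~\ref{coordinate}.

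Two further comments on your sketch. First, your argument for the first assertion needs the projection $C\lgst|_{\tcy\lgst-\ti\Delta}\to\ti V_1$ to be fibrewise injective; to establish that you must identify the $\ti V_1$-component of the ``node-smoothing'' normal direction and check it is not proportional to $(u_i|_E)$. This is plausible but is a nontrivial obstruction computation you have not carried out; the paper's route sidesteps it entirely. Second, your last step contains an error: the first assertion concerns $\tcy\lgst-\ti\Delta$, which is disjoint from $\ti\Delta-\Delta$, so it does \emph{not} force the intersection over $\ti\Delta-\Delta$ to be the zero section. What would work instead is to use the $\CC\sta$-homogeneity to spread the containment from the fixed locus $\Delta$ to all of $\ti\Delta$; but note that $\Delta\sub\ti\Delta\sub\tcy\lpri$, so Lemma~\ref{cone1}(2) does not apply anywhere on $\ti\Delta$ and you genuinely need the explicit local computation (or the paper's bound via $\cN$) to control $C\lgst$ there.
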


The proposition will be proved via a sequence of Lemmas.
First, following the argument in \cite[Sect 5.2]{CL}, the relative obstruction theories of the triple
$(\cpx,\ti\cX,\tfd)$ fit into a compatible diagram of distinguished triangles:
\beq\label{tri}
\begin{CD}
\gamma^\ast (E_{\ti\cX/\tfd})\dual @>>> (E_{\cpx/\ti\cD})\dual@>>> (E_{\cpx/\ti\cX})\dual @>{+1}>>\\
@VV{\gamma^\ast\phi_{\ti\cX/\tfd}}V @VV{\phi_{\cpx/\ti\cD}}V@VV{\phi_{\cpx/\tcX}}V\\
\gamma^\ast L^{\bullet}_{\ti\cX/\tfd} @>>> L^{\bullet}_{\cpx/\ti\cD}@>>>L^{\bullet}_{\cpx/\ti\cX}@>{+1}>>.
\end{CD}
\eeq
Here $E_{\tcy/\ti\cD}$ is given by \eqref{ty-o};
$E_{\ti\cX/\tfd}=R\bul\pi_{\tcX\ast} \sL_{\tcX}^{\oplus 5}$ and $E_{\tcy/\tcX}=R\bul\pi_{\tcy\ast} \sP_{\tcy}$. (As
usual, 
$\sL_{\tcy}=f_{\tcy}\sta\sO(1)$ and
$\sP_{\tcy}=\sL_{\tcy}^{\otimes(-5)}\otimes\omega_{\cC_{\tcy}/\tcy}$.)
Taking the cohomologies of the duals of the top row over
$\tcy\lgst$, we obtain  exact sequences of sheaves on $\tcy\lgst$: 
\beq\label{X-M}
\begin{CD}
H^1(E_{\cpx/\ti\cX}|_{\tcy\lgst}) @>>> H^1(E_{\cpx/\ti\cD}|_{\tcy\lgst})@>>>  H^0(\gamma^\ast E_{\ti\cX/\tfd}|_{\tcy\lgst})
\\
@|@|@|\\
\ti V_2@>{\ti\beta_1}>> \ti V=\ti V_1\oplus \ti V_2@>{\ti\beta_2}>>  \ti V_1.
\end {CD}
\eeq
Here the vertical identities are given by the explicit form of the complexes $E_{\tcy/\ti\cD}$, etc.;
$\ti\beta_1$ (resp. $\ti\beta_2$) is the tautological inclusion (resp. projection),
and the diagram commutes that follows from the proof of \cite[Prop 2.5 and 3.1]{CL}.

Let ${\cN}$  be the coarse moduli space of the the stack $h^1/h^0((L\bul_{\cpx/\ti\cX})\dual)|_{\ti\cY\lgst}$ relative to $\ti\cY\lgst$.
Then $\phi_{\cpx/\ti\cX}\dual$ induces an embedding
$${\cN}\sub H^1(E_{\ti\cY/\tcX})=\ti V_2.
$$

\begin{lemm}\label{lzo}
Viewing ${\cN}$ as a substack of $\ti V=\ti V_1\oplus \ti V_2$ via $\ti\beta_1$ in (\ref{X-M}), then
\beq\label{cone-int} \mathrm{Supp}(C\lgst)\cap(0\oplus \ti V_2)\sub {\cN}.
\eeq
\end{lemm}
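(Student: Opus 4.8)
The plan is to deduce \eqref{cone-int} from the functoriality of intrinsic normal sheaves for the tower $\cpx\to\ti\cX\to\ti\cD$, using the compatible relative obstruction theories of \eqref{tri} and their restriction \eqref{X-M} over $\tcygst$. First I would apply $h^1/h^0((\,\cdot\,)\dual)$ to the (dual of the) top row of \eqref{tri} and restrict everything to $\tcygst$, organizing the outcome as a commutative ladder. Writing $\fN_{A/B}=h^1/h^0((L\bul_{A/B})\dual)$ for intrinsic normal sheaves, the top row would be the exact sequence (\cite{BF}) $\fN_{\cpx/\ti\cX}|_{\tcygst}\to\fN_{\cpx/\ti\cD}|_{\tcygst}\to\gamma\sta\fN_{\ti\cX/\ti\cD}|_{\tcygst}$; the three vertical arrows would be the monomorphisms $\fN_{\cpx/\ti\cX}|_{\tcygst}\hookrightarrow\ti\bV_2$, $\fN_{\cpx/\ti\cD}|_{\tcygst}\hookrightarrow\ti\bV$ (the embedding through which $\bcgst$, and hence $C\lgst$, were defined), and $\gamma\sta\fN_{\ti\cX/\ti\cD}|_{\tcygst}\hookrightarrow\ti\bV_1$ coming from the three perfect relative obstruction theories; and, after these identifications, the bottom row would be the split sequence $\ti\bV_2\xrightarrow{\ti\beta_1}\ti\bV=\ti\bV_1\times_{\tcygst}\ti\bV_2\xrightarrow{\ti\beta_2}\ti\bV_1$ of \eqref{X-M}, with $\ti\beta_1$ the tautological inclusion and $\ti\beta_2$ the tautological projection, so $\ker\ti\beta_2=\image\ti\beta_1$. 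Because the bottom row is split, the left-hand square is then Cartesian: $\fN_{\cpx/\ti\cX}|_{\tcygst}=\fN_{\cpx/\ti\cD}|_{\tcygst}\times_{\ti\bV}\ti\bV_2$.

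With this in place the inclusion follows by a short chase. Let $x\in\mathrm{Supp}(C\lgst)$ lie in $0\oplus\ti V_2\sub\ti V$, i.e.\ $\ti\beta_2(x)=0$, and lift $x$ to a point $\tilde x$ of $\bcgst\sub\fN_{\cpx/\ti\cD}|_{\tcygst}$, with image $\tilde x'\in\ti\bV$ under the obstruction-theory embedding. Its $\ti\bV_1$-component $\ti\beta_2(\tilde x')$ has zero image in the coarse moduli $\ti V_1$; since the fibers of $\ti\bV_1\to\ti V_1$ are zero-dimensional with a single isomorphism class of objects, $\ti\beta_2(\tilde x')$ is the zero object, so $\tilde x'\in\ker\ti\beta_2=\image\ti\beta_1$. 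By the Cartesian square of the previous paragraph, $\tilde x\in\fN_{\cpx/\ti\cX}|_{\tcygst}$. Passing to the $\tcygst$-relative coarse moduli, under which $\fN_{\cpx/\ti\cX}|_{\tcygst}$ maps onto $\cN$, $\bcgst$ onto $C\lgst$, and $\ti\beta_1$ onto the inclusion $\ti V_2\hookrightarrow\ti V$, we obtain $x\in\cN$, which is \eqref{cone-int}.

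The step I expect to be the main obstacle is the first one: establishing the exact sequence of intrinsic normal sheaves for the tower $\cpx\to\ti\cX\to\ti\cD$ in this genuinely obstructed, stacky setting, and verifying that under the three obstruction-theory embeddings it is identified on the nose with the split bundle-stack sequence $\ti\bV_2\hookrightarrow\ti\bV\twoheadrightarrow\ti\bV_1$ arising from the direct-sum decomposition $E_{\cpx/\ti\cD}=R\bul\pi_{\tcy\ast}(\sL_\tcy^{\oplus 5}\oplus\sP_\tcy)=R\bul\pi_{\tcy\ast}\sL_\tcy^{\oplus 5}\oplus R\bul\pi_{\tcy\ast}\sP_\tcy$. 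This is the analogue, for the triple $(\cpx,\ti\cX,\ti\cD)$ over $\tcygst$, of the analysis of $(\cY,\cX,\cD)$ carried out in \cite[Sect 5.2]{CL}, and I would follow that argument line by line. The remaining ingredients — that perfect obstruction theories induce monomorphisms on intrinsic normal sheaves, that a monomorphism of Picard stacks reflects the zero object, and that forming the $\tcygst$-relative coarse moduli is compatible with the maps in the ladder — are routine. Note that the cosection $\ti\si$ plays no role in this Lemma; it enters only afterwards, when $C\lgst$ and $\cN$ are combined with the cosection-localized Gysin map.
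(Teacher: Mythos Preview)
Your proposal is essentially correct and rests on the same ingredients as the paper's proof: the compatible triangle \eqref{tri}, the identification of its $H^1$ with the split sequence \eqref{X-M}, and the injectivity of the maps induced by perfect obstruction theories. The difference is in packaging. You work globally at the level of bundle stacks, invoking an exact sequence of intrinsic normal sheaves for the tower $\cpx\to\ti\cX\to\ti\cD$ and a Cartesian square among Picard stacks; you then correctly flag this as the main technical obstacle. The paper sidesteps that obstacle entirely: since \eqref{cone-int} is a set-theoretic containment of supports, it suffices to check it fiber by fiber over closed points $y\in\tcygst$. Restricting the dual of \eqref{tri} to $y$ and taking $H^1$ yields a commutative diagram of finite-dimensional vector spaces with injective vertical arrows (injectivity being the pointwise avatar of ``perfect obstruction theory''), and the chase $(C\lgst\cap(0\oplus\ti V_2))|_y\sub\image(\varrho_y)\cap\ker(\ti\beta_2|_y)\sub\cN_y$ is then elementary linear algebra. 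What this buys is that no stack-level exactness or Cartesian property is needed; your global argument would give the same conclusion but at the cost of justifying the very step you identify as delicate.
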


\begin{proof}
It suffices to show that for any closed $y\in \ti\cY\lgst$, we have
$$  (C\lgst\cap(0\oplus \ti V_2))|_y\sub {\cN}_y\defeq {\cN}\times_{\ti\cY\lgst} y.
$$
First, it is shown in \cite{BF} that
${\cN}_y= H^1((L\bul_{\cpx/\ti\cX})\dual|_{y})$.
Dualizing \eqref{tri}, restricting it to $y$ and taking its cohomology groups,
we obtain the
commutative diagram of arrows
\beq\label{key}
\begin{CD}
{\cN}_{y}
@>>>H^1((L\bul_{\ti\cY/\tfd})\dual|_{y})
@>>> H^1(\gamma^\ast (L\bul_{\ti\cX/\tfd})\dual|_{y})\\
@VV{}V@VV{\varrho_y}V@VV{}V\\
 \ti V_2|_{y}@>{\ti\beta_1|_y}>>(\ti V_1\oplus \ti V_2)|_{y}@>{\ti\beta_2|_y}>>\ti V_1|_{y}\, ,
\end{CD}
\eeq
where the vertical arrows are $H^1$ of $\phi\dual_{\cpx/\ti\cX}|_{y}$, $\phi\dual_{\cpx/\tfd}|_{y}$
and $\gamma^\ast\phi\dual_{\ti\cX/\tfd}|_{y}$, respectively,
and the bottom line follows from \eqref{X-M}.
Since $\phi_{\tcy/\ti\cD}$, etc., are perfect obstruction theories,
the vertical arrows in (\ref{key}) are injective.

We now prove \eqref{cone-int}. First, the inclusion $\bcgst\sub \bC_{\ti\cY/\tfd}=h^1/h^0((L\bul_{\ti\cY/\tfd})\dual)$
induces an inclusion of their
respective fibers over $y$
$$\bcgst|_y\sub h^1/h^0((L\bul_{\ti\cY/\tfd})\dual)|_{y}= h^1/h^0((L\bul_{\ti\cY/\tfd})\dual|_{y}).
$$
(Here we use the base change property of $h^1/h^0$-construction; see \cite[Lemm 2.3]{CL1}.)
Taking coarse moduli, we have $ C\lgst|_y\sub H^1((L\bul_{\ti\cY/\tfd})\dual|_{y})$.
Hence
$$(C\lgst\cap(0\oplus \ti V_2))|_y=C\lgst|_y\cap (0\oplus  \ti V_2|_y) \sub \image(\varrho_y) 
\cap \ker(\ti\beta_2|_y).
$$
Because in (\ref{key}) all vertical arrows are injective and the squares are commutative,
$$\image(\varrho_y) 
\cap \ker(\ti\beta_2|_y) \sub 0\oplus {\cN}_{y}\sub 0\oplus \ti V_2|_y.
$$
This proves \eqref{cone-int}.
\end{proof}


\begin{lemm}\label{bL}
The substack ${\cN}\sub \ti V_2$  is reduced  and its support
is the union of the zero section $0_{\ti V_2}\sub \ti V_2$ with the total space of a line subbundle
$\ti \cF\sub \ti V_2|_{\ti\Delta}$ that is the pullback of a line subbundle
$\cF\sub V_2|_{{}\Delta}$.
\end{lemm}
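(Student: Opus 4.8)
The plan is to make the statement étale-local on $\tcy\lgst$ and then compute the relative intrinsic normal sheaf stack $\mathfrak N_{\cpx/\ti\cX}$ along the ghost locus by hand, using the explicit equations of Proposition \ref{coordinate}. So fix a closed $y\in\tcy\lgst$ and pass to the local model there: $\tcy\lgst=(w=0)\subset Z'=Z\times\Ao$, the map $\fp|_{\tcy\lgst}:\tcy\lgst\to\tcX\lgst=(w=0)\subset Z$ is the projection $p_Z$, and $\ti\Delta=\fp^{-1}(\Delta)=(z_1=\cdots=z_4=w=0)\subset Z'$. By Proposition \ref{coordinate}(1) and (4), inside the smooth $\tcX$-scheme $\tcX\times\Ao$ the substack $\tcy$ is the zero locus of the single function $g=w\cdot t$, with $\tcX\times\Ao\to\tcX$ smooth; hence $\tau_{\ge-1}L^\bullet_{\cpx/\ti\cX}$ is represented by the two-term complex $[\,\mathcal I/\mathcal I^2\xrightarrow{\ \overline{dg}\ }\Omega_{(\tcX\times\Ao)/\tcX}|_\tcy=\mathcal O_\tcy\,dt\,]$ with $\mathcal I=(g)$, and $\mathfrak N_{\cpx/\ti\cX}$, together with its restriction to $\tcy\lgst$, is determined by this truncation.

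The heart of the argument is the computation of $\mathcal I/\mathcal I^2$ and of $\overline{dg}$. Taking $Z$ integral and using that $z_1,\dots,z_4$ and the $w_\mu$ extend to a regular system of parameters, an elementary annihilator computation gives $\mathrm{Ann}_{\mathcal O_\tcy}(\overline g)=(z_1,\dots,z_4)$, so $\mathcal I/\mathcal I^2\cong\mathcal O_\tcy/(z_1,\dots,z_4)$; in particular $(\mathcal I/\mathcal I^2)|_{\tcy\lgst}\cong\mathcal O_{\ti\Delta}$ and $\mathcal I/\mathcal I^2$ vanishes on $\tcy\lgst-\ti\Delta$, while $\overline{dg}(\overline g)=w\,dt$. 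Feeding this into the complex, dualizing and taking $h^1/h^0$: over $\tcy\lgst-\ti\Delta$ one checks $\fp$ is smooth (locally $\tcy=\tcX\times\Ao$ there), so $\cN|_{\tcy\lgst-\ti\Delta}$ is the zero section of $\ti V_2$; over $\ti\Delta$ the $dt$-direction becomes obstructed — the relevant connecting map being multiplication by $w$, which vanishes on $\ti\Delta$ — and a computation of the derived restriction of $[\mathcal I/\mathcal I^2\to\mathcal O_\tcy\,dt]$ identifies $h^1/h^0$ of its dual, over $\ti\Delta$, with the total space of a line bundle; via the embedding $\cN\hookrightarrow\ti V_2$ induced by $\phi^\vee_{\cpx/\ti\cX}$ and recorded in \eqref{X-M}, this is a line subbundle $\ti\cF\subset\ti V_2|_{\ti\Delta}$, so $\mathrm{Supp}\,\cN=0_{\ti V_2}\cup\ti\cF$. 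Reducedness of $\cN$ then follows from the local description: the only ideals occurring are $(w)$, $(z_1,\dots,z_4)$ and $(z_1,\dots,z_4,w)$, which are radical by the normal-crossing hypothesis of Proposition \ref{coordinate}(2), so $\cN$ is locally the spectrum of a reduced ring.

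Finally I would show $\ti\cF=\gamma\sta\cF$ for a line subbundle $\cF\subset V_2|_\Delta$. Over $\ti\Delta=\gamma^{-1}(\Delta)$ the pair $(\cC_\tcy,\sL_\tcy)$ — hence $\sP_\tcy$, hence $R^\bullet\pi_{\tcy\ast}\sP_\tcy$ and the obstruction-theory map — is the $\gamma$-pullback of the corresponding data over $\Delta$, because along $\tcy\lgst=\Tot(L)$ the extra $\Ao$-direction is exactly the $p$-field direction and affects neither the curve nor the line bundle; together with the $\CC\sta$-equivariance of $\phi_{\cpx/\ti\cD}$ (Lemma \ref{5.7}) and the identifications $\gamma\sta V_i=\ti V_i$ of Section 5, this forces $\ti\cF$ to descend to $\Delta$. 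The main obstacle is the middle paragraph: carrying out the cotangent-complex computation correctly, in particular handling that $g=wt$ is a zerodivisor on $\tcX\times\Ao$ — so $\tcy$ is not a Cartier divisor there, $L^\bullet_{\cpx/\ti\cX}$ has cohomology below degree $-1$, and the derived restrictions to $\tcy\lgst$ and to $\ti\Delta$ acquire $\mathrm{Tor}$-terms — and checking that these extra terms do not affect $h^0$ and $h^1$ of the duals that enter $\cN$.
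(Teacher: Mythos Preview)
Your approach is essentially the paper's: work \'etale-locally via Proposition~\ref{coordinate}, embed $\ti Y\hookrightarrow \ti X\times\Ao$ by the single equation $g=wt$, and compute $\tau_{\ge -1}L^\bullet_{\ti Y/\ti X}\cong[I/I^2\to\Omega_{(\ti X\times\Ao)/\ti X}|_{\ti Y}]$ with $I=(wt)$. Your annihilator computation $I/I^2\cong\sO_{\ti Y}/(z_1,\dots,z_4)$ is correct and, after tensoring with $\sO_{\ti Y\lgst}$, matches the paper's $\sO_{\ti Y_\Delta}$; the paper likewise observes $d_{\ti X}(wt)|_{\ti Y\lgst}=w\,dt|_{\ti Y\lgst}=0$, so the $T$-action is trivial and the coarse moduli is $N=\spec S^\bullet_{\sO_{\ti Y\lgst}}(\sO_{\ti Y_\Delta})=\ti Y\lgst\cup\ti F$. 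For the descent of $\ti\cF$ the paper also uses the $\CC^\ast$-equivariance of the obstruction theory (your Lemma~\ref{5.7} reference is exactly right); your ``everything is pulled back along $\gamma$'' sentence is heuristic and not really a separate argument --- the clean statement is that $\ti\cF\subset\ti V_2|_{\ti\Delta}$ is invariant under $\Phi_{2,1}$, hence (being a linear subspace fiberwise) under $\Phi_{2,0}$, hence descends.

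Your flagged ``main obstacle'' about $wt$ being a zerodivisor and possible $\mathrm{Tor}$-contributions is not a genuine obstacle, and the paper shows why: since $h^1/h^0((L^\bullet_{\ti Y/\ti X})^\vee)=h^1/h^0((L^{\bullet\,\ge -1}_{\ti Y/\ti X})^\vee)$, one never needs the full cotangent complex; and the restriction $h^1/h^0(\,\cdot\,)|_{\ti Y\lgst}$ is the \emph{non-derived} base change of the quotient stack $[\spec S^\bullet(I/I^2)/T]$, which gives $[\spec S^\bullet(I/I^2\otimes\sO_{\ti Y\lgst})/T|_{\ti Y\lgst}]$ on the nose. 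So there are no derived restrictions to control --- just compute $I/I^2\otimes\sO_{\ti Y\lgst}$, which you already did. Once you make this explicit your proof is complete and coincides with the paper's.
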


\begin{proof}
Let $y\in W=\tcy\lgst$ be a closed point. We pick a chart $\iota: \ti Y\to\ti\cY$ of $y\in\ti\cY$ and its associated embedding
given in Proposition \ref{coordinate}. (We follow the notation introduced there.)
Then by \'etale base change
$$\iota^\ast h^1/h^0((L\bul_{\cpx/\ti\cX})\dual)|_{\ti Y\lgst}\cong h^1/h^0((L\bul_{\ti Y/\ti X})\dual)|_{\ti Y\lgst}.
$$

We denote $Z'_{\ti X}=\ti X\times_Z Z'\cong \ti X\times \Ao$. Lemma \ref{coordinate} induces a closed
$\ti X$-embedding $\ti Y\to Z'_{\ti X}$.
Let $I$ be the ideal of $\ti Y$ in $Z'_{\ti X}$. By definition,
\beq\label{L-del}
L_{\ti Y/\ti X}^{\bullet \geq -1}\cong [I/I^2\mapright{\delta} \Omega_{Z'_{\ti X}/\ti X}|_{\ti Y}],
\eeq
where $\delta(a)=d_{\ti X} a$. ($d_{\ti X}$ is the relative differential that annihilates $\sO_{\ti X}$.)
We denote $\mathrm S\bul(\cdot)$ the symmetric product, 
and denote
$$N \defeq \spec \mathrm S_{\sO_{\ti Y\lgst}}\bul(I/I^2\otimes_{\sO_{\ti Y}}\sO_{\ti Y\lgst}).
$$
Following \cite{BF}, since $h^1/h^0((L\bul_{\ti Y/\ti X})\dual)= h^1/h^0((L^{\bullet\geq -1}_{\ti Y/\ti X})\dual)$, we have
$$
h^1/h^0((L\bul_{\ti Y/\ti X})\dual)|_{\ti Y\lgst}=[(\spec {\mathrm S}\bul_{\sO_{\ti Y}}( I/I^2))/(T_{Z'_{\ti X}/\ti X}|_{\ti Y})]|_{\ti Y\lgst}=[N/(T_{Z'_{\ti X}/\ti X}|_{\ti Y\lgst})].
$$
Here the $T_{Z'_{\ti X}/\ti X}|_{\ti Y\lgst}$-action on $N$ is induced by the arrow $\delta$ in \eqref{L-del}.


By Lemma \ref{coordinate}, $I=(wt)\sub\sO_{Z'_{\ti X}}=\sO_{\ti X}[t]$. Using $\ti Y\lgst=(w=0)\cap Z'_{\ti X}$, we obtain
$\delta|_{\ti Y\lgst}(wt)=d_{\ti X}(wt)|_{\ti Y\lgst} =w\cdot dt|_{\ti Y\lgst}=0$;
hence the $T_{Z'_{\ti X}/\ti X}|_{\ti Y\lgst}$-action on $N$ is trivial,
and the coarse moduli of $h^1/h^0((L_{\ti Y/\ti X})\dual)|_{\ti Y\lgst}$ is $N$; namely
$$\cN\times_{\tcY\lgst} \ti Y\lgst= N.
$$

We determine $N$.
Define $\alpha:\sO_{Z'}\to I/I^2\otimes_{\sO_{\ti Y}}\sO_{\ti Y\lgst}$ to be the homomorphism via $\alpha(1)=wt\otimes 1$;
because $I=(wt)$, $\alpha$ is surjective.
We observe
$\alpha(w)=w^2t\otimes 1=wt\otimes w=0$, because $w=0\in \sO_{\ti Y\lgst}$; we also have
 $\alpha(z_i)=wz_it\otimes 1=0$, because $wz_i=0$ in $\sO_{\ti X}\subset I$.
A further check shows that $\ker  \alpha=(w,z_1,z_2,z_3,z_4)$; hence
$I/I^2\otimes_{\sO_{\ti Y}}\sO_{\ti Y\lgst}\cong \sO_{\ti Y\lD}$, where
$\ti Y\lD=\ti Y\times_\cpx{\ti\Delta}$. As $\spec \mathrm S_{\sO_{\ti Y\lgst}} \bul\!(\sO_{\ti Y\lD})$
is the union of ${\ti Y\lgst}$ with a line bundle $\ti F$ over $\ti Y\lD$ so that
$\ti Y\lgst\cap \ti F=\ti Y_\Delta$,
this proves that
$N={\ti Y\lgst}\cup \ti F$ 
with $\ti Y\lgst\cap \ti F=\ti Y_\Delta$.

As $\ti Y\to \tcy$ is an arbitrary \'etale chart,
it shows that there is a line bundle $\ti\cF$ on $\ti\Delta\sub \tcy\lgst$
such that
\begin{enumerate}
\item both $\tcy\lgst$ are $\ti\cF$ are closed substacks of $\cN$;
\item $\cN={\tcY\lgst}\cup\ti\cF$ and $\tcy\lgst\cap \ti\cF=\ti\Delta$,
where $\tcy\lgst\sub\cN$ is the zero section $0_{\barV_2}$.
\end{enumerate}
 Since $\cN\sub \ti V_2$ is a subcone,
$\ti\cF=\cN\times_{\tcy\lgst} \ti\Delta\sub \ti V_2|_{\ti\Delta}$ is also a subcone and we conclude that $\ti\cF\sub \ti V_2|_{\ti\Delta}$ is
a subline bundle.


Finally, we show that there is a subline line bundle $\cF\sub  V_2|_{\Delta}$ that
pullbacks to $\ti\cF\sub \ti V_2|_{\ti\Delta}$.
For this, we use the $\CC\sta$-action on $\tcy\lgst$ introduced in the proof of
Lemma \ref{5.7}. By Lemma \ref{5.7}, $\tcy\lgst-\tcX\lgst\to\tcX\lgst$ is a $\CC\sta$-quotient
morphism. By its construction, the relative obstruction theory of $\tcy\to\tcX$ 
is $\CC\sta$-equivariant. Thus $\ti\cF|_{\tcy\lgst-\tcX\lgst}\sub \ti V_2|_{\tcy\lgst-\tcX\lgst}$ is $\CC\sta$-equivariant,
where the $\CC\sta$-action on $\ti V_2|_{\tcy\lgst-\tcX\lgst}$ is via the linearization $\Phi_{2,1}$.

As this linearization $\Phi_{2,1}$ differs from $\Phi_{2,0}$ by a scalar multiplication, and because
$\ti\cF|_y\sub \ti V_2|_y$ for $y\in \ti\Delta$ is a linear
subspace, $\ti\cF|_{\ti\Delta}\sub \ti V_2|_{\ti\Delta}$ is also invariant under the linearization
$\Phi_{2,0}$. Since $\Phi_{2,0}$ is induced from the pullback $\ti V_2=\gamma\sta V_2$, by descent theory,
$\ti\cF|_{\ti\Delta-\Delta}\sub \ti V_2|_{\ti\Delta-\Delta}$ descends to a subline bundle $\cF\sub V_2|_\Delta$.
(As usual, we view $\Delta\sub\ti\Delta$ via the zero section of $W=\text{Total}(L)$.)
Thus $\ti\cF|_{\ti\Delta-\Delta}=\gamma\sta\cF|_{\ti\Delta-\Delta}\sub\ti V_2|_{\ti\Delta-\Delta}$.
Finally, since $\ti\Delta-\Delta$ is dense in $\ti\Delta$,  we have $\ti\cF=\gamma\sta\cF\sub \ti V_2|_{\ti\Delta}$.
This proves the Lemma.
\end{proof}

We now apply Corollary \ref{count} to the situation $W=\tcy\lgst$ and $M=\tcX\lgst$.
As will be clear later, we will work with each individual component $W_\mu=\tcy_\mu$ of $\tcy\lgst$,
and the corresponding components $M_\mu=\tcX_\mu$. Accordingly, we will add the
subscript $\mu$ to denote the corresponding objects restricting to $M_\mu$, $W_\mu$ or $\barW_\mu=\barW\times_M M_\mu$.
For instance, $\barV_{i,\mu}=\barV_i|_{\barW_\mu}$, $\Delta_\mu=\Delta\cap M_\mu$ and
$\ti\Delta_\mu=\ti\Delta\times_M M_\mu$, etc.

Following \eqref{split-0}, the cycle $[C\lgst]\in Z\lsta \ti V$ is a union
$[C\lgst]=\sum_{\mu\vdash d} \iota_{\mu\ast}[C_\mu]$,
where $[C_\mu]\in Z\lsta \ti V_\mu$ and $\iota_\mu: \ti V_\mu\to \ti V$ is the inclusion.
Here as argued before \eqref{C-dec}, $[C_\mu]=\sum_k n_{\mu,k} [C_{\mu,k}]$ could be a union of
integral multiples of irreducible components $C_{\mu,k}$.
Let $\barC_\mu$ be the closure of $C_\mu$ in $\barV_\mu$, which by the given irreducible decomposition
is $\sum_k n_{\mu,k} [\overline C_{\mu,k}]$; we let $\barC_{\mu,k,b}=\barC_{\mu,k}\cap(0\oplus \barV_{2,\mu})$,
and let
$$R_\mu=\sum_k n_{\mu,k} N_{\barC_{\mu,k,b}} \barC_{\mu,k}
$$
be the normal cone to $\barC_{\mu,b}=\sum_k n_{\mu,k} \overline C_{\mu,k,b}$ in $\barC_\mu$.

By Corollary \ref{count}, we have
\beq\label{Rmu}\deg 0^!_{\ti\xi,\loc}[C\lgst]=\deg 0^!_{\barV}[\barC\lgst]=\sum_{\mu\vdash d} \deg 0^!_{\barV_\mu}[R_\mu].
\eeq

We look more closely the term $0^!_{\barV_\mu}[R_\mu]$.  By Lemma \ref{bL} we have a line subbundle
$\cF_\mu:=\cF|_{\Delta_\mu}\black\sub V_{2,\mu}|_{\Delta_\mu}=V_2|_{\Delta_\mu}$. By Lemma \ref{compactify}, Lemma \ref{5.7} and Proposition \ref{C-gst},
$[\barC_{\mu,b}]$ lies over
the union of the zero section $0_{\barV_\mu}$ with the total space
$\overline F_\mu=\text{Total}(\overline\cF_\mu)$, 
where
$\overline{\Delta}_\mu=\barW_\mu\times_{M_\mu}\Delta_\mu$,
and $\bar\gamma_\mu:= \bar\gamma|_{\barW_\mu}:\barW_\mu\to M_\mu$. We let
\beq\label{Zmu}
Z_\mu=0_{\barV_{2,\mu}}\cup \overline F_\mu.
\eeq
Thus $R_\mu$ lies in the bundle $\barV_{1,\mu}\times_{\barW_\mu}Z_\mu$ over $Z_\mu$, where $Z_\mu\to \barW_\mu$ is
the identity map when restricted to $0_{\barV_{2,\mu}}$, and is composite of the projection $\overline F_\mu\to \overline\Delta_\mu
\to \barW_\mu$.

We claim that $\dim \barW_\mu=5d+4$, $\dim R_\mu=5d+6$,
$\rank \barV_{1,\mu}=5$ and $\rank \barV_{2,\mu}=5d+1$. Indeed, by Proposition \ref{coordinate}, we know
that all $\tcX_\mu$ are smooth and are of equal dimensions. Thus
$$\dim \tcX_\mu=\dim\tcX_{(d)}=\dim \barM_{0,1}(\Pf,d)+\dim \barM_{1,1}=5d+3.
$$
As $\tcy_\mu$ is a line bundle over $\tcX_\mu$, we obtain $\dim\barW_\mu=\dim \tcy_\mu=5d+4$.
For $\dim R_\mu$, since $C_\mu$ has pure-dimension, and $\dim R_\mu=\dim C_\mu$,
we only need to verify $\dim C_\mu=\dim \tcy_\mu+2$. But this follows from (2) of Lemma \ref{cone1}.
The remainder two identities follows from Riamenn-Roch theorem.

We denote by $|\overline C_{\mu,b}|$ the support of $\overline C_{\mu,b}$, which is the union
$\cup_k \overline C_{\mu,k,b}$.
Since $R_\mu$ is a normal cone, by its construction, it is a
cone inside the vector bundle $\barV_{1,\mu}\times_{\barW_\mu}|\barC_{\mu,b}|\sub \barV_{1,\mu}\oplus \barV_{2,\mu}$.
(Here as total space of vector bundles, $\barV_{1,\mu}\times_{\barW_\mu}\barV_{2,\mu}=\barV_{1,\mu}\oplus\barV_{2,\mu}$.)

Therefore, we have
\beq\label{int-11}
0_{\barV_{1,\mu}}^![R_\mu]\in A_{5d+1} |\barC_{\mu,b}|.
\eeq

We write $0_{\barV_{1,\mu}}^![R_\mu]=[P_{\mu, 1}]+[P_{\mu,2}]$, where $P_{\mu,1}\sub 0_{\barV_\mu}$
and $P_{\mu,2}\sub \overline F_{\mu}$. 

\begin{lemm}\label{6.4}
We have the vanishing $\deg 0_{\barV_{2,\mu}}^![P_{\mu,i}]=0$ in the following cases:
\begin{enumerate}
\item $\mu\vdash d$ and $i=2$;
\item $\mu\ne(d)\vdash d$ and $i=1$.
\end{enumerate}
\end{lemm}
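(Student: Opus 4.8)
The plan is to compute $0^!_{\barV_{2,\mu}}$ explicitly on the two kinds of cycles at hand and reduce each statement to the vanishing of a top Chern class. For $P_{\mu,1}\subset 0_{\barV_{2,\mu}}$ the localized Gysin map is just the self-intersection of the zero section, so $\deg 0^!_{\barV_{2,\mu}}[P_{\mu,1}]=\deg\bigl(e(\barV_{2,\mu})\cap[P_{\mu,1}]\bigr)$. For $P_{\mu,2}\subset\overline F_\mu=\mathrm{Total}(\overline\cF_\mu)\subset\barV_{2,\mu}|_{\overline\Delta_\mu}$, where $\overline\cF_\mu$ is a sub-line-bundle with locally free quotient $\overline\cG_\mu:=(\barV_{2,\mu}/\overline\cF_\mu)|_{\overline\Delta_\mu}$ of rank $5d$, the excess-intersection formula gives $0^!_{\barV_{2,\mu}}[P_{\mu,2}]=c_{5d}(\overline\cG_\mu)\cap 0^!_{\overline\cF_\mu}[P_{\mu,2}]$ inside $A_\ast(\overline\Delta_\mu)$.

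Case (1) ($i=2$, all $\mu$) then reduces to $c_{5d}(\overline\cG_\mu)=0$. By Lemma \ref{bL}, $\overline\cF_\mu$ is the pullback of a sub-line-bundle $\cF|_{\Delta_\mu}\subset V_2|_{\Delta_\mu}$ and $\barV_{2,\mu}|_{\overline\Delta_\mu}$ the pullback of $V_{2,\mu}|_{\Delta_\mu}$ along the $\Po$-bundle $\bar\gamma_\mu\colon\overline\Delta_\mu\to\Delta_\mu$, so $\overline\cG_\mu=\bar\gamma_\mu^\ast\bigl((V_{2,\mu}/\cF)|_{\Delta_\mu}\bigr)$ is pulled back from $\Delta_\mu$. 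But $\Delta=\tcX\lpri\cap\tcX\lgst$ is a proper closed substack of $\tcX\lpri$, and $\dim\tcX\lpri=5d$ (the primary component has the expected dimension); hence $\dim\Delta_\mu\le 5d-1<5d=\rank\overline\cG_\mu$, and a codimension $5d$ class on a substack of dimension $<5d$ vanishes. Thus $c_{5d}(\overline\cG_\mu)=0$ and $\deg 0^!_{\barV_{2,\mu}}[P_{\mu,2}]=0$.

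Case (2) ($i=1$, $\mu=(d_1,\dots,d_\ell)$ with $\ell\ge2$). First I would push the computation down to $\tcX_\mu$: since $\barV_{2,\mu}=\bar\gamma_\mu^\ast V_{2,\mu}$ and $P_{\mu,1}$ sits in $0_{\barV_{2,\mu}}\cong\barW_\mu$, the projection formula along $\bar\gamma_\mu\colon\barW_\mu\to\tcX_\mu$ rewrites $\deg\bigl(e(\barV_{2,\mu})\cap[P_{\mu,1}]\bigr)$ as $\int_{\tcX_\mu}e(V_{2,\mu})\cdot\alpha$, where $\alpha\in A^2(\tcX_\mu)$ is the Chern-class contribution of $\barV_{1,\mu}$ extracted from $0^!_{\barV_{1,\mu}}[R_\mu]$ using the description of $R_\mu$ over $\barW_\mu-\overline\Delta_\mu$ as a rank two subbundle of $\barV_{1,\mu}$ (part (2) of Lemma \ref{cone1}) and the identity $\bar\gamma_{\mu\ast}[D_\infty]=[\tcX_\mu]$. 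Next, applying $R^1$ of the universal-curve projection to the partial normalization of the universal curve at the nodes joining the genus one subcurve $E$ to the rational components $R_i$ (on which $\sL$ has degree $0$ and $d_i$ respectively) gives on $\tcX_\mu$ an exact sequence $0\to\lambda^\vee\to V_{2,\mu}\to\bigoplus_{i=1}^\ell W_i\to 0$, where $\lambda$ is the pullback of the Hodge line from $\barM_{1,1}$ and $W_i$, of rank $5d_i$, is pulled back from $\barM_{0,1}(\Pf,d_i)$ along the morphism remembering the $i$-th rational tail. Hence $e(V_{2,\mu})=c_1(\lambda^\vee)\prod_{i=1}^\ell e(W_i)$ is the pullback, along $\Psi\colon\tcX_\mu\to\barM_{1,1}\times\prod_i\barM_{0,1}(\Pf,d_i)$, of a class of codimension $5d+1$; since for $\ell\ge2$ the target has dimension $1+\sum_i(5d_i+2)=5d+2\ell+1>5d+3=\dim\tcX_\mu$, the morphism $\Psi$ is not dominant, and a dimension count then forces $\int_{\tcX_\mu}e(V_{2,\mu})\cdot\alpha=0$. (For $\mu=(d)$, so $\ell=1$, $\Psi$ is dominant onto $\barM_{1,1}\times\barM_{0,1}(\Pf,d)$ and the same integral equals $\frac{(-1)^{5d}}{12}N_0(d)_Q$, which is the content of Proposition \ref{prop3}, proved in Section seven.)

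Case (1) is a short dimension argument; the main obstacle is case (2). The delicate points are: pinning down the class $\alpha$ produced by $0^!_{\barV_{1,\mu}}[R_\mu]$, which requires the explicit local equations of Proposition \ref{coordinate}, the weight $(0,1)$ homogeneity of Lemma \ref{5.7}, and care with the twist $\barV_1=\bar\gamma^\ast V_1(-D_\infty)$ near $D_\infty$; constructing the morphism $\Psi$ compatibly with the identification of the $W_i$; and carrying out the dimension count for $\ell\ge2$ precisely enough — in particular ruling out any contribution from the components of $[P_{\mu,1}]$ that are supported over $\overline\Delta_\mu$.
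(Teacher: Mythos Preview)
Your treatment of case (1) is correct and essentially matches the paper: both arguments reduce to the fact that $\overline\cF_\mu$ and $\barV_{2,\mu}|_{\overline\Delta_\mu}$ are pulled back from $\Delta_\mu$, which has dimension $5d-1$, too small to support a class of the required degree. The paper phrases this via the projection formula along the proper map $\eta_\mu\colon\overline\cF_\mu\to\cF_\mu$ rather than via excess intersection, but the content is the same.

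Your case (2), however, has a genuine gap. The step ``the target of $\Psi$ has dimension $5d+2\ell+1>5d+3=\dim\tcX_\mu$, hence $\Psi$ is not dominant, hence the integral vanishes'' is not a valid deduction: non-dominance of $\Psi$ does not by itself force $\int_{\tcX_\mu}\Psi^\ast(\text{class})\cdot\alpha=0$. What is needed is that the class being integrated is pulled back from a stack of dimension \emph{strictly less} than $5d+1$, and your target $\barM_{1,1}\times\prod_i\barM_{0,1}(\Pf,d_i)$ is far too large for that. Even granting your exact sequence, the factor $c_1(\lambda^\vee)$ lives on the elliptic-curve side and the class $\alpha$ (which you yourself flag as delicate) is not shown to be pulled back via $\Psi$, so the projection formula does not apply cleanly.

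The paper's approach avoids all of this. The key geometric observation you are missing is that the $\ell$ rational tails of a point of $\tcX_\mu$ are attached at nodes mapping to the \emph{same} point of $\Pf$, so the correct target is not the product $\prod_i\barM_{0,1}(\Pf,d_i)$ but the fiber product
\[
\cB_\mu=\barM_{0,1}(\Pf,d_1)\times_\Pf\cdots\times_\Pf\barM_{0,1}(\Pf,d_\ell),
\]
which has dimension $5d-2\ell+4\le 5d$ for $\ell\ge 2$. The paper constructs a locally free sheaf $\ti\sK_\mu$ on $\ti\cB_\mu=\cB_\mu/S_\mu$ together with an isomorphism $\rho_\mu^\ast\ti\sK_\mu^\vee\cong V_{2,\mu}$, and then simply pushes the $(5d{+}1)$-cycle $[P_{\mu,1}]\subset 0_{\barV_{2,\mu}}$ forward along the induced proper map $\barV_{2,\mu}\to\ti\sK_\mu^\vee$ to land in $A_{5d+1}(\ti\cB_\mu)=0$. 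No identification of $\alpha$, no exact sequence splitting $V_{2,\mu}$, and no control over the components of $[P_{\mu,1}]$ supported on $\overline\Delta_\mu$ are required; the fiber product over $\Pf$ does all the work.
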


\begin{proof}
We prove the vanishing for $i=2$. By Lemma \ref{bL}, the subline bundle $\cF_\mu\sub V_1|_{\Delta_\mu}$ pullbacks to $\overline\cF_\mu\sub \barV_{2,\mu}|_{\overline\Delta_\mu}$.
Let $\eta_\mu: \overline\cF_\mu\to \cF_\mu$ be the projection; it is proper since $\overline\Delta\to\Delta$ is proper. Then by the projection formula,
$$\deg 0^!_{\barV_{2,\mu}}[P_{\mu,2}]=\deg 0^!_{V_{2,\mu}}(\eta_{\mu\ast}[P_{\mu,2}]).
$$
Since $\eta_{\mu\ast}[P_{\mu,2}]\in A_{5d+1} \cF_\mu$ and $\dim \cF_\mu=\dim \Delta_\mu+1=(\dim \tcX\lpri-1)+1=5d$,
we have $\eta_{\mu\ast}[P_{\mu,2}]=0$. This proves the first vanishing.

To prove the second vanishing, we will construct a proper DM stack $\ti \cB_\mu$, a vector bundle $\ti\sK_\mu$ on $\ti\cB_\mu$,
and a proper morphism with an isomorphism
\beq\label{rhomu}\rho_\mu: M_\mu=\tcX_\mu\lra \ti\cB_\mu\and \rho_\mu\sta \ti\sK_\mu\dual\cong V_{2,\mu},
\eeq
such that
$\dim \ti\cB_{(d)}=5d+1$, and $\dim \ti\cB_\mu\le 5d$ for $\mu\ne (d)$. Once $(\ti\cB_\mu, \ti\sK_\mu)$ is
constructed, we let $\delta_\mu: \barV_{2,\mu}\to \ti\sK_\mu\dual$ be the projection
induced by the mentioned isomorphism; then, for $\mu\ne (d)$, by the projection
formula we have $\deg 0^!_{\barV_{2,\mu}}[P_{\mu,1}]=\deg 0^!_{\sK_\mu\dual}(\delta_{\mu\ast}[P_{\mu,1}])=0$
because $P_{\mu,1}\sub 0_{\barV_{2,\mu}}$ and $A_{5d+1} \ti\cB_\mu=0$ for dimension reason.

Constructing $\ti\cB_\mu$ and $\ti\sK_{\mu}$ with the required properties will occupy the remainder of this Section.
\end{proof}

\def\lle{_{\tcX_\mu,\mathrm{pr}}}
\def\llr{_{\tcX_\mu,\mathrm{tl}}}
\def\llg{_{\mu,\mathrm{gst}}}
\def\fh{{\mathfrak h}}
\def\bv{{\mathbf v}}
\def\sP{{\mathscr P}}
\def\Moo{\bcM_{1,1}}
\def\sP{{\mathscr P}}
\def\sQ{{\mathscr Q}}

We first state a decomposition result, which follows from the construction in \cite{HL1}. Let $\mu=(d_1,\cdots,d_\ell)$ be
a partition of $d$ and let $(f_{\ti\cX\lmu},\pi_{\ti\cX\lmu},\cC_{\ti\cX\lmu})$ be the tautological family of $\ti\cX\lmu$.
By the construction of $\tcX\lgst$, the map associated to a closed point in $\tcX\lmu$ is by attaching $\ell$ one-pointed
$[u_i,C_i,p_i]\in \bcM_{0,1}(\Pf,d_i)$ such that $u_i(p_i)=u_j(p_j)$ for all $i$, $j$ to an $\ell$-pointed
stable elliptic curve. We state this in the family version

\begin{prop}\lab{lem6.1}
The tautological family $\cC_{\tcX_\mu}\to\tcX_\mu$
admits an $\ell$-section $\Sigma_\mu\sub \cC_{\tcX_\mu}$ (a codimension one closed substack, proper, and an $\ell$-\'etale
cover of $\tcX_\mu$) 
that lies in the locus of nodal points of the fibers of $\cC_{\tcX_\mu}/\tcX_\mu$, and splits $\cC_{\tcX_\mu}$ into
two families of curves: $\cC\lle$ and $\cC\llr$ ($\sub \cC_{\tcX_\mu}$), such that
\begin{enumerate}
\item
the pair $(\cC\lle,\Sigma_\mu)$ is a family of $\ell$-pointed (unordered)
stable genus one curves; the
morphism $f_{\tcX_\mu}$ is constant along fibers of $\cC\lle\to \tcX_\mu$;
\item the pair $(\cC\llr,\Sigma_\mu)$
is a family of $\ell$-pointed (unordered) nodal rational curves over $\tcX_\mu$ such that each closed fiber of
$\cC\llr\to \tcX_\mu$ has $\ell$ connected components, and each connected component contains one marked point.
\end{enumerate}
\end{prop}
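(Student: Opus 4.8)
The approach is to extract the decomposition directly from the modular blow-up of \cite{HL1}, and then to verify the two itemized properties by elementary bookkeeping on nodal curves. Write $\mu=(d_1,\dots,d_\ell)$. By construction $\tcX_\mu$ is the component of $\tcX\lgst$ whose general point is a stable map with smooth elliptic domain carrying $\ell$ rational tails of degrees $d_1,\dots,d_\ell$, as recalled just before the Proposition; and by \cite{HL1} (following \cite{VZ}) the ``core'' of the universal curve over $\tcX_\mu$ --- the minimal connected arithmetic genus one subcurve, which over special points may be a nodal cubic or a cycle of $\Po$'s --- propagates as a \emph{flat} subfamily. Denote it $\cC\lle\subset\cC_{\tcX_\mu}$, set $\cC\llr=\overline{\cC_{\tcX_\mu}\setminus\cC\lle}$, and let $\Sigma_\mu=\cC\lle\cap\cC\llr$ with its reduced structure. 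The explicit local model of the universal curve in \cite{HL1} (see also Proposition \ref{coordinate}) then yields at once that $\cC\lle\to\tcX_\mu$ and $\cC\llr\to\tcX_\mu$ are flat with nodal fibres, that $\Sigma_\mu$ lies in the relative nodal locus, and that $\Sigma_\mu\to\tcX_\mu$ is finite \'etale; since the general fibre has exactly $\ell$ nodes separating core from tails, this cover has degree $\ell$. This gives the first assertion of the Proposition.

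To prove (1): flatness and nodality of $\cC\lle/\tcX_\mu$ come from the previous paragraph, the fibres are connected of arithmetic genus one by the fixed combinatorial type, and the $\ell$ points cut out by $\Sigma_\mu$ on a fibre are smooth points of it (being nodes of $\cC_{\tcX_\mu}$) and pairwise distinct (a node joins exactly two components). For fibrewise stability of $(\cC\lle,\Sigma_\mu)$ I would argue componentwise: on a fibre, the special points of the pointed genus one curve $(\cC\lle,\Sigma_\mu)$ lying on a component $C'$ of $\cC\lle$ are exactly the special points of the underlying domain $C$ of the stable map lying on $C'$ --- these are just nodes, since $C$ carries no marks, and a $\Sigma_\mu$-point on $C'$ is precisely a node of $C$ where a rational tail leaves the core --- so, $f$ being constant on $\cC\lle$, the Deligne--Mumford stability of the stable map $C\to\Pf$ forces every rational $C'$ to carry at least three special points. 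That $f_{\tcX_\mu}$ is constant along the fibres of $\cC\lle$ holds because the weight-zero condition makes $f^\ast\sO(1)$ a degree-zero, globally generated, hence trivial, line bundle on each such fibre, so $f$ contracts it to a point. ``Unordered'' records that when some $d_i$ coincide the construction fixes no ordering of the tails, so $\tcX_\mu$ only maps to $\bcM_{1,\ell}/S_\ell$. To prove (2): flatness and nodality of $\cC\llr/\tcX_\mu$ come likewise from the first paragraph; on a fibre $\cC\llr$ is a disjoint union of trees of rational components and $C=\cC\lle\cup\cC\llr$ is connected of arithmetic genus one, so every connected component of $\cC\llr$ meets $\cC\lle$, and meets it in exactly one point (two or more would yield a loop through the core, forcing $p_a(C)\ge 2$); hence $\cC\llr$ has $\#(\cC\lle\cap\cC\llr)=\deg(\Sigma_\mu/\tcX_\mu)=\ell$ connected components on each fibre, each containing exactly the one $\Sigma_\mu$-point at which it attaches.

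The step I expect to be the main obstacle is the flatness assertion used in the first paragraph: that the fibrewise dichotomy ``core versus rational tails'' assembles into honest flat subfamilies, with $\Sigma_\mu$ an honest finite \'etale $\ell$-section, \emph{uniformly over all of $\tcX_\mu$}, not merely over its dense open locus of smooth elliptic curves with $\ell$ smooth rational tails. Over that open locus everything is immediate; all the work sits at the degenerate fibres, where the core can be a nodal cubic or a cycle of $\Po$'s, and this is exactly the information packaged in the explicit local defining equations of the modular blow-up $\ticfM$ --- equivalently, in the resolution of $R^\bullet\pi_\ast\sL$ for families of arithmetic genus one curves --- worked out in \cite{HL1} (see also Proposition \ref{coordinate}). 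Once that structure is quoted, the rest is the elementary nodal-curve bookkeeping above.
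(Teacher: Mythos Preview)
Your proposal is correct and takes the same approach as the paper: both defer the substantive content---that the core/tail decomposition and the separating nodes assemble into flat families over all of $\tcX_\mu$---to the modular construction of \cite{HL1}. The paper's own proof is in fact the single sentence ``The proof follows from the modular construction of $\tcX_\mu$ in \cite{HL1},'' so your write-up, which fills in the elementary nodal-curve bookkeeping (stability of the pointed core via stable-map stability, constancy of $f$ on the core via the degree-zero globally-generated argument, and the genus count forcing each tail to meet the core in exactly one point), is simply a more explicit version of the same argument.
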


Here the subscript
``pr'' stands for the ``principal part'' and the subscript ``tl'' stands for the ``tail''.
We comment that the total space $\overline{\cC}\llr$ may have less than $\ell$ connected components.

\begin{proof}
The proof follows from the modular construction of $\tcX\lmu$ in \cite{HL1}.
\end{proof}

We now fix a partition $\mu$ and assume $\ell\ge 2$. Using this decomposition, we can relate $\tcX_\mu$ to a stack that
parameterizes the tails of $[u,C]\in\tcX_\mu$. 
We take the moduli of genus zero stable morphisms $\bcM_{0,1}(\Pf,d_i)$, considered as a stack over $\Pf$ via
the evaluation morphism $\ev_i: \bcM_{0,1}(\Pf,d_i)\to \Pf$ (of the marked points); we form
$$\cB_\mu=\bcM_{0,1}(\Pf,d_1) \times_\Pf\cdots\times_\Pf \bcM_{0,1}(\Pf,d_\ell).
$$
We let $S_\mu$ be the subgroup of permutations $\alpha\in S_\ell$ that leave
the $\ell$-tuple $(d_1,\cdots,d_\ell)$ invariant (i.e. $d_i=d_{\alpha(i)}$ for all $1\le i\le \ell$).
Each $\alpha\in S_\mu$ acts as an automorphism of $\cB_\mu$ by permuting the $i$-th and the $\alpha(i)$-th
factors of $\cB_\mu$. This gives an $S_\mu$-action on $\cB_\mu$; we define the stacky quotient
$$\ti\cB_\mu=\cB_\mu/S_\mu.
$$
Since $\bcM_{0,1}(\Pf,d_i)$ are proper DM stacks and have dimensions $5d_i+2$, $\ti\cB$ is a proper
DM-stack and
\beq\label{dim-B}
\dim \ti\cB_\mu=(5d+2l)-(4l-4)=5d-2l+4.
\eeq

For each $i$ we denote the universal family of $\barM_{0,1}(\Pf,d_i)$
by $(\pi_i,f_i):\cC_i\to \barM_{0,1}(\Pf,d_i)\times \Pf$ with
$s_i:\barM_{0,1}(\Pf,d_i)\to\cC_i$ the section of marked point.
We introduce 
$$\sK_i=\pi_{i\ast} f_i\sta \sO(5),
$$
a rank $5d_i+1$ locally free sheaf (vector bundle) on $ \barM_{0,1}(\Pf,d_i)$
with an evaluation homomoprhism $e_i: \sK_i\to \ev_i\sta \sO(5)$.

 We let
$v_i:\cB_\mu\to 
\bcM_{0,1}(\Pf,d_i)$
be the projection. Since $\cB_\mu$ is constructed as the fiber-product using the evaluations $\ev_i$, the
collection $\{\ev_i\}_{i=1}^\ell$ descends to a single evaluation morphism
$\ev: \cB_\mu\to\Pf$. We form a sheaf $\sK_\mu$ on $\cB_\mu$ defined via the exact sequence
$$0\lra \sK_\mu\lra \oplus_{i=1}^\ell v_i\sta\sK_i\mapright{\beta} \oplus_{j=1}^{\ell-1} \ev\sta \sO(5)\lra 0,
$$
where $\beta=(v_1\sta e_1-v_2\sta e_2,\cdots,v_{\ell-1}\sta  e_{\ell-1}-v_\ell\sta e_\ell)$.
Since each $e_i:\sK_i\to \ev_i\sta\sO(5)$ is surjective, $\sK_\mu$ is locally free.
Further, $\sK_\mu$ is invariant under $S_\mu$, thus descends to a locally
free sheaf $\ti\sK_\mu$ on $\ti\cB_\mu$. (Caution, the arrow $\beta$ is not invariant under $\sim$, due to the choice of
indexing.) As $\rank \sK_i=5d_i+1$, $\rank \ti\sK_\mu=5d+1$.

%
%
%
%

We define the desired morphism
$\rho\lmu:\tcX_\mu\to \ti\cB_\mu$ stated in \eqref{rhomu}.
Given any closed $x\in\tcX\lmu$, we let $[f_x, \cC_x]$ with $\Sigma_x\sub \cC_x$ be the
restriction of $f_{\tcX\lmu}$ and $\Sigma_\mu$ to $\cC_x=\cC_{\tcX\lmu}\times_{\tcX\lmu} x$. By Proposition \ref{lem6.1},
$\Sigma_x$ divides $\cC_x$ into a union of a genus one curve with $\ell$ one-pointed genus zero curves,
and the restriction of $f_x$ to these $\ell$ genus zero curves form $\ell$ one-pointed genus zero stable maps to $\Pf$, of
degrees $d_1,\cdots,d_\ell$, respectively. We label these $\ell$-stable maps as $[u_i,C_i,p_i]$
so that $[u_i]\in\barM_{0,1}(\Pf, d_i)$. We define
$\rho_\mu(x)$ be the equivalence class in $\ti\cB_\mu$ of $([u_1],\cdots,[u_\ell])\in\cB_\mu$.
It is routine to check that this pointwise definition defines a morphism $\rho_\mu$ as indicated.

We verify that $\rho_\mu\sta\ti \sK_\mu\dual\cong V_{2,\mu}$. First, by Serre duality,
$$V_{2,\mu}=R^1\pi_{\tcX\lmu\ast}\sP_{\ti\cX\lmu}\cong (\pi_{\tcX\lmu \ast} f^\ast_{\tcX\lmu}\sO(5))\dual.
$$
Let $f\llr$ and $\pi\llr$ be the restrictions of $f_{\tcX\lmu}$ and $\pi_{\tcX\lmu}$ to $\cC\llr$.
We obtain the restriction homomorphism
\beq\label{restrict}
\pi_{\tcX\lmu \ast} f_{\tcX\lmu}\sO(5)\mapright{\sub} (\pi\llr)\lsta f\llr\sta\sO(5).
\eeq
Since fibers of $\cC\lle\to \tcX\lmu$ are connected, and $f_{\tcX\lmu}$ restricting to
them are constants, (\ref{restrict}) is injective and its
cokernel is the difference of the evaluations along $\Sigma_\mu\sub\cC\llr$.
On the other hand, denoting $(\oplus_{i=1}^\ell v_i\sta\sK_i)/S_\mu$ the quotient of
$\oplus_{i=1}^\ell v_i\sta\sK_i$ over $\cB_\mu$ by $S_\mu$, (which is its descent to $\ti\cB_\mu$,) then
via $\rho_\mu$, we have a canonical isomorphism
$$\rho_\mu\sta((\oplus_{i=1}^\ell v_i\sta\sK_i)/S_\mu)\cong (\pi\llr)\lsta f\llr\sta\sO(5).
$$
A direct inspection shows that under this isomorphism, $\rho_\mu\sta\ti\sK_\mu\cong \pi_{\tcX\lmu \ast} f^\ast_{\tcX\lmu}\sO(5)$.
This proves $\rho_\mu\sta\ti\sK_\mu\dual \cong V_{2,\mu}$.

In case $\mu=(d)$, we let
$\ti\cB_{(d)}=\barM_0(\Pf,d)$, and all others are the same.

\begin{proof}[Completing the proof of Lemma \ref{6.4}]
The pair $(\ti\cB_\mu,\sK_\mu)$ satisfies the requirements stated in the proof of Lemma \ref{6.4}, except possibly the
dimensions part. By \eqref{dim-B}, for $\ell\ge 2$, we have $\dim\ti\cB_\mu\le 5d$;
for $\mu=(d)$, we have $\dim \ti\cB_{(d)}=\dim\barM_0(\Pf,d)=5d+1$, which are as required.
This completes the proof of Lemma \ref{6.4}.
\end{proof}

\section{proof of Proposition \ref{prop3}}
\def\dcirc{_\circ}
\def\ld{\dcirc}
\def\Wc{W\dcirc}
\def\barS{\overline S}

In this section, we treat the remainder case left out in Lemma \ref{6.4}.
Following the discussion in the previous section, we know that
$[P_{(d),1}]\in A_{5d+1} \barW_{\!(d)}$ and $\ti\cB_{(d)}$ is irreducible and of dimension $5d+1$;
thus 
\beq\label{c0}
(\rho_{(d)}\circ \bar\gamma_{(d)})_{\ast}[P_{(d),1}]=c[\ti\cB_{(d)}]
\eeq
for a $c\in\QQ$. Applying
the projection formula to $\rho_{(d)}\circ \bar\gamma_{(d)}$, we obtain
\beq\label{c?}\deg 0^!_{\barV_{2,(d)}}[P_{(d),1}]=c\cdot \deg c_{5d+1}(\ti\sK_{(d)})[\ti\cB_{(d)}]
=c\cdot (-1)^{5d+1}N_0(d)_Q.
\eeq
Therefore, all we need  is to determine $c$.

To this end, we need to determine the cone $C_{(d)}\sub \ti V|_{W_{(d)}}$.
By the construction in \cite{HL1}, $\barM_1(\Pf,d)$ contains an open substack consisting of stable maps $[u,C]$
such that their domains $C$ are union of elliptic curves with $\Po$, and the maps $u$ restricted to the elliptic curves are
constant and restricted to $\Po$ are regular embeddings $\Po\to\Pf$. We denote this open substack by
$M\dcirc\sub \barM_1(\Pf,d)$. By \cite{HL1}, $M\dcirc$ is away
from the blowing up loci of the modular blowing up $\tcX\to\cX=\barM_1(\Pf,d)$. Thus, the preimage
of $M\dcirc$ in $\tcX$ is identical to $M\dcirc$, and thus $\tcy\times_{\cX} M\dcirc=\cY\times_{\cX}M\dcirc$.

We denote
$W\dcirc=\cY\times_{\cX}M\dcirc$. By the prior discussion, we know $W\dcirc=\tcy\times_{\cX}M\dcirc\sub W_{(d)}$,
and is dense in $W_{(d)}$.
Since we need to work with various obstruction theories that are originally constructed for $\cY$ and $\cX$,
we will view $M\dcirc\sub M_{(d)}$ and also an open substack of $\cX$; same for $W\dcirc\sub W_{(d)}$ and
$W\dcirc\sub \cY$.

Since $W\dcirc\sub \cY$ is open, the obstruction theory $\phi_{\cY/\cD}$ restricted to $W\dcirc$ gives
a perfect relative obstruction theory $\phi_{\cY/\cD}|_{W\dcirc}: (E_{\cY/\cD})\dual|_{W\dcirc}\to L\bul_{W\dcirc/\cD}$.

\begin{lemm}\label{7.1}
Restricting to $W\dcirc$, we have
$$C_{(d)}|_{W\dcirc}=H^1((L\bul_{W\dcirc/\cD})\dual)\mapright{\sub} H^1(E_{\cY/\cD}|_{W\dcirc})=\ti V|_{W\dcirc}.
$$
Further, $H^1((L\bul_{W\dcirc/\cD})\dual)$ is a rank two locally free sheaf of $\sO_{W\dcirc}$-modules, and
the arrow above is injective with locally free cokernel.
\end{lemm}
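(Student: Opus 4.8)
Here is the plan.

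The plan is to show that on the open substack $W\dcirc$ everything becomes smooth, so that the relative intrinsic normal cone coincides with the full relative normal bundle stack $h^1/h^0((L\bul_{W\dcirc/\cD})\dual)$, and then to read off all three assertions by passing to coarse moduli. First I would collect the inputs. Recall (from the paragraphs introducing $W\dcirc$) that $M\dcirc$ avoids the blow-up loci, so over $W\dcirc$ one has $\tcy=\cY$ and $\ti\cD=\cD$, and the obstruction theory $\phi_{\tcy/\ti\cD}$ restricts there to the relative perfect obstruction theory $\phi_{\cY/\cD}|_{W\dcirc}:(E_{\cY/\cD})\dual|_{W\dcirc}\to L\bul_{W\dcirc/\cD}$ of $W\dcirc\to\cD$; also, since $M\dcirc\subset\tcX\lgst$ avoids the blow-up loci, $W\dcirc$ is disjoint from $\tcy\lpri$, so Lemma \ref{cone1}(2) applies over $W\dcirc$. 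Two further inputs: $W\dcirc$ is open in $\tcy_{(d)}$, hence smooth by Corollary \ref{3.2}, and $\cD$ is a smooth Artin stack.

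The core step is the identification $\bC_{(d)}|_{W\dcirc}=h^1/h^0((L\bul_{W\dcirc/\cD})\dual)$. Since $W\dcirc$ and $\cD$ are both smooth, the relative intrinsic normal cone $\bC_{W\dcirc/\cD}$ equals the intrinsic normal sheaf: \'etale-locally one factors $W\dcirc\hookrightarrow P\to\cD$ with $P\to\cD$ smooth, whence $P$ is smooth and $W\dcirc\hookrightarrow P$ is a regular immersion, so its normal cone is the normal bundle $N_{W\dcirc/P}$; quotienting by $T_{P/\cD}|_{W\dcirc}$ yields the full bundle stack $h^1/h^0((L\bul_{W\dcirc/\cD})\dual)$. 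Because $W\dcirc\subset\cY$ is open this cone equals $\bC_{\cY/\cD}|_{W\dcirc}=\bC_{\tcy/\ti\cD}|_{W\dcirc}$, which over $W\dcirc\subset\tcy_{(d)}$ is a rank two bundle over the irreducible $W\dcirc$ (Lemma \ref{cone1}(2)) and hence equals the single component $\bC_{(d)}|_{W\dcirc}$. Now I would take coarse moduli relative to $W\dcirc$: this sends $h^1/h^0((L\bul_{W\dcirc/\cD})\dual)$ to $H^1((L\bul_{W\dcirc/\cD})\dual)$ and $h^1/h^0(E_{\cY/\cD}|_{W\dcirc})$ to $H^1(E_{\cY/\cD}|_{W\dcirc})=R^1\pi_{\cY\ast}(\sL_\cY^{\oplus 5}\oplus\sP_\cY)|_{W\dcirc}=\ti V|_{W\dcirc}$, and it sends the closed immersion of bundle stacks induced by $\phi_{\cY/\cD}|_{W\dcirc}$ (cf. \cite{BF}) to an inclusion $H^1((L\bul_{W\dcirc/\cD})\dual)\hookrightarrow\ti V|_{W\dcirc}$; thus $C_{(d)}|_{W\dcirc}=H^1((L\bul_{W\dcirc/\cD})\dual)\subset\ti V|_{W\dcirc}$, which is the first assertion.

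For the remaining assertions I would represent $E_{\cY/\cD}|_{W\dcirc}$ by a two-term complex of vector bundles $A\xrightarrow{\,d\,}B$ in degrees $0$ and $1$, so that $h^1/h^0(E_{\cY/\cD}|_{W\dcirc})=[B/A]$. Over $W\dcirc$ the map $f_\cY$ is constant on the minimal genus-one subcurves and of degree $d$ on the rational ones, so $\sL_\cY$ restricts trivially to the former; consequently the fibrewise dimensions of $H^0$ and $H^1$ of $\sL_\cY^{\oplus 5}\oplus\sP_\cY$ are constant on $W\dcirc$, hence $\ker d$ and $\coker d$ are locally free and $\image d\subset B$ is a subbundle with quotient $\ti V|_{W\dcirc}$. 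By Lemma \ref{cone1}(2), $\bC_{(d)}|_{W\dcirc}=\bC_{W\dcirc/\cD}$ is a rank two subbundle stack of $[B/A]$, i.e. has the form $[D/A]$ for a subbundle $\image d\subset D\subset B$ with $D/\image d$ of rank two. Passing to coarse moduli gives $C_{(d)}|_{W\dcirc}=D/\image d$, locally free of rank two, whose inclusion into $\ti V|_{W\dcirc}=B/\image d$ is injective with locally free cokernel $B/D$.

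The step I expect to require the most care is the bookkeeping of the previous two paragraphs: correctly matching the stacky objects $\bC_{\tcy/\ti\cD}$, $h^1/h^0(E_{\tcy/\ti\cD})$, $\bC_{W\dcirc/\cD}$ with their coarse moduli $C_{(d)}$, $\ti V$, $H^1((L\bul_{W\dcirc/\cD})\dual)$ over $W\dcirc$, and verifying that ``rank two subbundle stack'' in the sense of Lemma \ref{cone1}(2) translates, after coarse moduli, into an honest rank two locally free subsheaf with locally free cokernel. Beyond Corollary \ref{3.2}, the smoothness of $\cD$, and Lemma \ref{cone1} there is no new geometric input; the content lies in organizing these identifications.
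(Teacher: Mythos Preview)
Your proposal is correct and follows essentially the same strategy as the paper: use that $W\dcirc$ and $\cD$ are smooth so that $\bC_{W\dcirc/\cD}=h^1/h^0((L\bul_{W\dcirc/\cD})\dual)$, then pass to coarse moduli. The organization differs slightly. The paper factors the map through its image $\cD\ld\defeq q\dcirc(W\dcirc)\sub\cD$, notes $\cD\ld$ is a smooth codimension-two locally closed substack (this is the content behind Lemma~\ref{cone1}(2)), and then identifies $H^1((L\bul_{W\dcirc/\cD})\dual)$ with the pullback of the normal sheaf $\cN_{\cD\ld/\cD}$; the rank-two and local-freeness statements fall out immediately from this, without invoking Lemma~\ref{cone1}(2) as a black box. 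For injectivity with locally free cokernel the paper argues pointwise---$H^1(\phi_{\cY/\cD}\dual|_y)$ is injective because $\phi_{\cY/\cD}$ is a perfect obstruction theory---and combines this with smoothness of $W\dcirc$, rather than choosing an explicit two-term presentation $[A\to B]$. Your route through Lemma~\ref{cone1}(2) and the complex $[A\to B]$ is a perfectly valid repackaging of the same geometry; the paper's route via $\cD\ld$ has the advantage that it makes the rank-two sheaf explicit as a pullback normal bundle, which is exactly what the subsequent Lemma~\ref{7.2} and Lemma~\ref{7.3} need to compare it with the obstruction bundles.
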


\begin{proof}
Let $q\dcirc: W\dcirc\to\cD$ be the projection induced by $\cY\to\cD$;
let $\cD\ld=q\dcirc(W\dcirc)$ be its image stack. By the description of $W\dcirc$ and the
argument in the proof of Lemma \ref{cone1}, $\cD\ld{\sub} \cD$ is a smooth codimension two
locally closed substack. \black By \cite[Chap. III Prop. 3.2.4\black]{Illusie}, $H^1(L_{\cD\ld/\cD}\dual)$  
is isomorphic to the normal sheaf
to $\cD\ld$ in $\cD$, which is a rank two locally free sheaf on $\cD\ld$.

On the other hand, since $W\dcirc\to \cD\ld$ is smooth, \cite[ Chap. III Prop. 3.1.2 \black]{Illusie} 
 implies
that $H^i((L\bul_{W\dcirc/\cD\ld})\dual)=0$ for $i\geq 1$. Taking $H^1$ of the distinguished triangle
$$(L\bul_{W\dcirc/\cD\ld})\dual\lra (L\bul_{W\dcirc/\cD})\dual\lra q\dcirc^\ast(L\bul_{\cD\ld/\cD})\dual\mapright{+1},
$$
we obtain canonical
$$q\dcirc^{\ast} H^1((L\bul_{\cD\ld/\cD})\dual)\cong H^1((L\bul_{W\dcirc/\cD})\dual), 
$$
thus they are rank two locally free sheaves on $W\dcirc$.

Next, for any closed point $y\in \cY$, since $\phi_{\cY/\cD}$ is a perfect obstruction theory,
$H^1(\phi_{\cY/\cD}\dual|_y)$ is injective. This combined with $W\dcirc$ being smooth shows that
the arrow in the statement of Lemma \ref{7.1} is injective with locally free cokernel.

Finally, we show that $C_{(d)}|_{W\dcirc}=H^1((L\bul_{W\dcirc/\cD})\dual)$.
Because $W\dcirc\to \cD\ld$ is smooth and $\cD\ld\sub \cD$ is smooth of codimension two, we conclude that
$L\bul_{W\dcirc/\cD}$ is perfect of amplitude $[0,1]$, and
$H^i((L\bul_{W\dcirc/\cD})\dual)$ are locally free. Hence
the coarse moduli of $h^1/h^0((L\bul_{W\dcirc/\cD})\dual)$ is $H^1((L\bul_{W\dcirc/\cD})\dual)$.
By \cite[Prop 3.1.2]{BF}, $\bC_{\cY/\cD}|_{W\dcirc}=h^1/h^0((L\bul_{W\dcirc/\cD})\dual)$. Taking coarse moduli, we obtain
$C_{(d)}|_{W\dcirc}= H^1(L\bul_{W\dcirc/\cD})\dual)$.
This proves the Lemma.
\end{proof}

Let $\gamma_\circ: W\dcirc\to M\dcirc$ be the projection induced by $\cY\to\cX$.
We denote by $\cM$ the Artin stack of genus one nodal curves; (this is consistent with
$\cfM$ is the stack of weighted genus one nodal curves). Since $\cfM\to\cM$ is \'etale, the obstruction
theory of $\cX\to \cM$ is the same as that of $\cX\to\cfM$.

We determine the subsheaf $H^1((L\bul_{W_\circ/\cD})\dual)\sub \ti V|_{W\dcirc}$ by
studying the following diagrams:
\beq\label{diag-3}
\begin{CD}
H^1((L\bul_{W\dcirc/\cD})\dual) @>\alpha_1>> \gamma_\circ\sta H^1((L\bul_{M\dcirc/\cD})\dual)
@>\alpha_2>> \gamma_\circ\sta H^1((L\bul_{M\dcirc/\cM})\dual)\\
@VV{H^1(\phi_{\cY/\cD}\dual)}V @VV{H^1(\phi_{\cX/\cD}\dual)}V @VV{H^1(\phi_{\cX/\cM}\dual)}V\\
H^1(E_{\cY/\cD}|_{W\dcirc})@>{\ti\beta_2|_{W_\circ}}>> \gamma_\circ\sta H^1(E_{\cX/\cD}|_{M\dcirc})
@>\gamma_\circ\sta\beta_\circ>>
\gamma_\circ\sta H^1(E_{\cX/\cM}|_{M\dcirc}).
\end{CD}
\eeq
Here, $\ti\beta_i$ is defined in \eqref{X-M} and $\beta_\circ$ is the tautological projection
induced by the comparison of the obstruction theories of $\cX\to \cD$ and $\cX\to\cM$:
\beq\label{beta2}\beta_\circ: H^1(E_{\cX/\cD}|_{M\dcirc}) \lra  H^1(E_{\cX/\cM}|_{M\dcirc}).
\eeq

We comment that the left commutative square is induced by that the obstruction theories (relative to $\cD$)
of $\cY\sub C(\pi\lsta(\sL^{\oplus 5}\oplus \sP))$ and of $\cX\sub C(\pi\lsta(\sL^{\oplus 5}))$
are compatible under
$C(\pi\lsta(\sL^{\oplus 5}\oplus \sP))\to C(\pi\lsta(\sL^{\oplus 5}))$ induced by the projection
$\pr:\sL^{\oplus 5}\oplus \sP\to \sL^{\oplus 5}$. The right square is commutative following
\cite[Lemm 2.8]{CL}.

We let $(f_{M_\circ}, \pi_{M_\circ}):\cC_{M_\circ}\to \Pf\times M\dcirc$ be the universal family of $M\dcirc\sub \cX$.

\begin{lemm}\label{7.2}
All sheaves in the diagram \eqref{diag-3}are locally free sheaves of $\sO_{W\dcirc}$-modules;
all vertical arrows are injective with locally free cokernels; the arrow $\alpha_1$ is an isomorphism;
the arrow $\alpha_2$ is surjective and has rank one kernel; the arrow $\ti\beta_2|_{W_\circ}$ is the obvious
projection $\ti V|_{W\dcirc}=(\ti V_1\oplus \ti V_2)|_{W\dcirc}\to \ti V_1|_{W\dcirc}$;
the arrow $\beta_\circ$ is the projection
$$\beta_\circ: H^1(E_{\cX/\cD}|_{M\dcirc}) = R^1\pi_{M_\circ\ast} f_{M_\circ}\sta \sO(1)^{\oplus 5}\lra
H^1(E_{\cX/\cM}|_{M\dcirc})=R^1\pi_{M_\circ\ast} f_{M_\circ}\sta T_{\Pf}
$$
induced by the tautological projection $\sO(1)^{\oplus 5}\to T_{\Pf}$ in the Euler sequence of $\Pf$.
Finally, the cokernels of the middle and the third vertical arrows are isomorphic.
\end{lemm}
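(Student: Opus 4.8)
The plan is to read off every sheaf in \eqref{diag-3} from the explicit obstruction complexes of \cite{CL} and from Proposition~\ref{coordinate} and its corollaries, to identify the two bottom horizontal arrows directly from their constructions, and then to compare the obstruction theories of $\cX$ relative to $\cD$ and relative to $\cM$, the comparison being governed by the essentially unobstructed relative Picard stack $\cD\to\cM$.

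First I would collect the smoothness inputs: $W\dcirc$ and $M\dcirc$ are smooth (Proposition~\ref{coordinate}, Lemma~\ref{7.1}), $\cD\dcirc{\sub}\cD$ is smooth of codimension two (proof of Lemma~\ref{7.1}), and $\cD\to\cM$ is smooth, being a relative Picard stack. Over $M\dcirc$ the domain is a smooth elliptic curve $E$ carrying embedded rational tails with $f_{M_\circ}$ constant along $E$; feeding this into the normalization sequence shows that the relevant $R^1\pi_{M_\circ\ast}$'s have locally constant fibre rank, hence that all the bottom-row sheaves---which by \cite{CL} are $\ti V|_{W\dcirc}=(\ti V_1\oplus\ti V_2)|_{W\dcirc}$, $\gamma_\circ\sta(\ti V_1|_{M\dcirc})$, and $\gamma_\circ\sta(R^1\pi_{M_\circ\ast}f_{M_\circ}\sta T_\Pf)$---are locally free over $W\dcirc$. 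The arrow $\ti\beta_2|_{W_\circ}$ is, by its definition in \eqref{X-M}, the tautological projection $(\ti V_1\oplus\ti V_2)|_{W\dcirc}\to\ti V_1|_{W\dcirc}$; and by the comparison of the relative obstruction theories of $\cX\to\cD$ and of $\cX\to\cM$ (\cite[Lemm 2.8]{CL}) the arrow $\beta_\circ$ is obtained by applying $R\pi_{M_\circ\ast}$ to the pulled-back Euler sequence $0\to\sO_\cC\to f_{M_\circ}\sta\sO(1)^{\oplus 5}\to f_{M_\circ}\sta T_\Pf\to0$, so it is the asserted projection and is surjective (since $R^2\pi_{M_\circ\ast}\sO_\cC=0$) with kernel the image of $R^1\pi_{M_\circ\ast}\sO_\cC$.

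For the top row I would run the argument of Lemma~\ref{7.1} for the two smooth morphisms $W\dcirc\to\cD\dcirc$ and $M\dcirc\to\cD\dcirc$: this identifies both $H^1((L\bul_{W\dcirc/\cD})\dual)$ and $\gamma_\circ\sta H^1((L\bul_{M\dcirc/\cD})\dual)$ with the pullback of the rank-two normal sheaf of $\cD\dcirc$ in $\cD$, and exhibits $\alpha_1$ as the induced isomorphism. For $\alpha_2$ I would use the dualized transitivity triangle of $M\dcirc\to\cD\to\cM$,
$$H^1((L\bul_{M\dcirc/\cD})\dual)\mapright{\alpha_2}H^1((L\bul_{M\dcirc/\cM})\dual)\lra H^1((L\bul_{\cD/\cM}|_{M\dcirc})\dual)\lra H^2((L\bul_{M\dcirc/\cD})\dual),
$$
together with the smoothness of $\cD\to\cM$: this forces $\alpha_2$ to be surjective, its kernel being the image of $H^0((L\bul_{\cD/\cM}|_{M\dcirc})\dual)$, which is the line bundle $R^1\pi_{M_\circ\ast}\sO_\cC$ and injects precisely because over $M\dcirc$ the line bundle $f_{M_\circ}\sta\sO(1)$ is trivial along $E$ and so does not deform in the $M\dcirc$-direction. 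Each vertical arrow is $H^1$ of the dual of a perfect relative obstruction theory, hence fibrewise injective; with locally free source and target over the smooth bases it is a subbundle inclusion with locally free cokernel.

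For the last assertion I would compare the obstruction theories of $\cX$ over $\cD$ and over $\cM$: they fit, over $\cD\to\cM$, into a morphism of distinguished triangles whose third vertex is governed by $\phi_{\cD/\cM}$, a quasi-isomorphism because $\cD\to\cM$ is smooth, so $\mathrm{cone}(\phi_{\cD/\cM})=0$ and hence $\mathrm{cone}(\phi_{\cX/\cM})\cong\mathrm{cone}(\phi_{\cX/\cD})$; restricting to $M\dcirc$ and taking $H^1$ of the dual identifies $\coker(H^1(\phi_{\cX/\cM}\dual))$ with $\coker(H^1(\phi_{\cX/\cD}\dual))$. Concretely, this is the snake lemma applied to the short exact sequences $0\to\ker\alpha_2\to H^1((L\bul_{M\dcirc/\cD})\dual)\to H^1((L\bul_{M\dcirc/\cM})\dual)\to0$ and $0\to\ker\beta_\circ\to\ti V_1|_{M\dcirc}\to R^1\pi_{M_\circ\ast}f_{M_\circ}\sta T_\Pf\to0$: with injective middle and right verticals it yields $\coker v_1\cong\coker v_2$ once the induced map $\ker\alpha_2\to\ker\beta_\circ$ is an isomorphism, and this holds because both sides are the line bundle $R^1\pi_{M_\circ\ast}\sO_\cC$, mapped in the tautological way via the Euler sequence, the relevant connecting homomorphism $R^0\pi_{M_\circ\ast}f_{M_\circ}\sta T_\Pf\to R^1\pi_{M_\circ\ast}\sO_\cC$ vanishing over $M\dcirc$ since the Euler sequence is split along $E$. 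I expect the main obstacle to be precisely this last step---writing down the morphism of triangles of obstruction theories over $\cD\to\cM$ carefully, in particular handling the $\GG_m$ intrinsic to $\cD$, and checking the vanishing of that connecting homomorphism over $M\dcirc$; the rest is bookkeeping with cotangent complexes, flat base change for the $R\pi_\ast$'s, and the smoothness facts already at hand.
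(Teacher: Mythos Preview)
Your proposal is correct and follows essentially the same strategy as the paper: identify the top row with pullbacks of normal sheaves (as in Lemma~\ref{7.1}), read $\ti\beta_2|_{W_\circ}$ and $\beta_\circ$ directly from \eqref{X-M} and \cite[Lemm~2.8]{CL}, and use that $H^1(\phi\dual|_y)$ is injective to handle the vertical arrows.

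There is one mild difference worth noting. For $\alpha_2$ the paper does not use the transitivity triangle for $M\dcirc\to\cD\to\cM$; instead it introduces the image stack $\cM\dcirc\sub\cM$, observes that it is a locally closed smooth \emph{divisor} in $\cM$, identifies $H^1((L\bul_{M\dcirc/\cM})\dual)$ with the pullback of the invertible normal sheaf $\cN_{\cM\dcirc/\cM}$, and reads off the surjectivity of $\alpha_2$ from the surjection of normal sheaves $\cN_{\cD\dcirc/\cD}\to\cN_{\cM\dcirc/\cM}\otimes\sO_{\cD\dcirc}$. This sidesteps the stacky subtlety you flag (the $\GG_m$ in the cotangent complex of $\cD\to\cM$). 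For the final cokernel isomorphism the paper is also more economical: once $\alpha_2$ and $\beta_\circ$ are surjective with rank-one kernels and the middle and right verticals are subbundle inclusions, the snake lemma gives a surjection between the two rank-three locally free cokernels, hence an isomorphism---no separate identification of $\ker\alpha_2\cong\ker\beta_\circ$ via the Euler sequence is needed.
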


\begin{proof}
We let $\cM\ld\sub\cM$ be the image stack of $M\dcirc\to\cM$. By the description of
$M\dcirc$, $\cM\ld$ is a locally closed smooth divisor in $\cM$. Thus the normal sheaf
$\cN_{\cM\ld/\cM}$ is invertible and the canonical
\beq\label{pro-N}
\cN_{\cD\ld/\cD}\lra \cN_{\cM\ld/\cM}\otimes_{\sO_{\cM\ld}}\!\!\sO_{\cD\ld}
\eeq
is surjective.

Following the proof of Lemma \ref{7.1},
$H^1((L\bul_{M\dcirc/\cD})\dual)$ and $H^1((L\bul_{M\dcirc/\cM})\dual)$ are canonically isomorphic
to the pullbacks of the normal sheaves $\cN_{\cD\ld/\cD}$
and $\cN_{\cM\ld/\cM}$, respectively, and the arrow $\alpha_1$ and $\alpha_2$ are induced by
the identity map of $\cN_{\cD\ld/\cD}$ and \eqref{pro-N}, respectively. This proves the statements about
the sheaves and arrows in the top horizontal  line.

Parallel to the proof Lemma \ref{7.1}, we obtain that the vertical arrows are injective with locally free cokernels.

By definition of $\ti V_i$, the first two sheaves in the lower horizontal line are $\ti V|_{W\dcirc}$ and
$\gamma_\circ\sta V_1|_{M\dcirc}=\ti V_1|_{W\dcirc}$, and the arrow $\ti\beta_2|_{W_\circ}$ is the obvious projection
as stated. The statement about $\gamma_\circ\sta \beta_\circ$ and $\beta_\circ$ follows fom \cite[Lemm 2.8]{CL}.

A direct calculation shows that $R^1\pi_{M_\circ\ast} f_{M_\circ}\sta \sO(1)^{\oplus 5}$ and
$R^1\pi_{M_\circ\ast} f_{M_\circ}\sta T_{\Pf}$ have rank five and four, respectively, and $\gamma_\circ\sta\beta_\circ$ is surjective,
therefore $\ker(\gamma_\circ\sta\beta_\circ)$ is an invertible sheaf, and is isomorphic to $\ker(\alpha_2)$,
using that the middle and the third vertical arrows are injective with locally free cokernel. Consequently, the
cokernels of the middle and the third vertical arrows are isomorphic.
These complete the proof of the Lemma.
\end{proof}

We now determine the image sheaf of the third vertical arrow in \eqref{diag-3}.
Let $\xi=[u,C]\in M\dcirc$ be a closed point. By the description of $M\dcirc$, $C=E\cup R$ such that
$E$ (resp. $R$) is a nodal genus one curve (resp. $R\cong \Po$) and $p=E\cap R$ is a node of $C$,
and that $u|_E=\text{const}.$ and $u|_R:R\to\Pf$ is a regular embedding.
Let $\underline{\xi}\in \cM$ be the image of $\xi$ under the tautological $M\dcirc\to\cM$;
let $v\in T_{\cM, \underline{\xi}}$ be a non-zero vector normal to $\cM_{(d)}\sub \cM$.
According to the obstruction theory, the image of $H^1(\phi_{\cX/\cM}\dual)|_\xi$ in
$H^1(E_{\cX/\cM})|_\xi$ is the linear span of the image of $v$ under the composite
\beq\label{Ob-1} \ob_\Pf: H^0((L\bul_\cM)\dual|_{\underline{\xi}})\lra H^1((L\bul_{M\dcirc/\cM})\dual|_\xi)\lra
H^1(E_{\cX/\cM}|_\xi)
\eeq
induced by the obstruction theory $\phi_{\cX/\cM}$ (it is the obstruction assignment map).
Because $u|_E$ is constant, we have
$$H^1(E_{\cX/\cM}|_\xi)=H^1(u\sta T_\Pf)=H^1(\sO_E)\otimes_{\CC} (u\sta T_{\Pf})|_p=H^1(\sO_E)\otimes_\CC T_{\Pf,u(p)}.
$$

\begin{lemm}\label{7.3}
The linear span of the image of $v\in H^0((L\bul_\cM)\dual|_\xi)$ in $H^1(E_{\cX/\cM}|_\xi)$ is the subspace
$$H^1(\sO_E)\otimes_{\CC} u\lsta(T_{R,p})\sub H^1(\sO_E)\otimes_\CC T_{\Pf,u(p)}.
$$
\end{lemm}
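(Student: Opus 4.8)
The plan is to identify the obstruction assignment $\ob_\Pf$ of \eqref{Ob-1} explicitly, by analysing a one-parameter smoothing of the node $p$ of $C$.

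I would first pin down the target. Because $u|_R:R\to\Pf$ is an embedding, the Euler sequence of $\Pf$ pulled back to $R\cong\Po$ reads $0\to\sO_R\to\sO_R(d)^{\oplus5}\to u^*T_\Pf|_R\to0$, so $H^1(R,u^*T_\Pf|_R)=0$. Plugging this into the normalization sequence $0\to u^*T_\Pf\to u^*T_\Pf|_E\oplus u^*T_\Pf|_R\to u^*T_\Pf|_p\to0$, and using that $u|_E$ is constant so that $H^0(E,u^*T_\Pf|_E)=T_{\Pf,u(p)}$ maps isomorphically onto $u^*T_\Pf|_p$ by evaluation at $p$, the connecting map vanishes; this reproves the identification $H^1(E_{\cX/\cM}|_\xi)=H^1(u^*T_\Pf)=H^1(\sO_E)\otimes_\CC T_{\Pf,u(p)}$ stated just before the Lemma, the first factor being $H^1(E,u^*T_\Pf|_E)$ via restriction to $E$. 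Thus it suffices to show that the restriction of $\ob_\Pf(v)$ to $E$ spans the line $H^1(\sO_E)\otimes u_\ast(T_{R,p})$.

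Next I would compute $\ob_\Pf(v)$. Since $E_{\cX/\cM}=R^\bullet\pi_\ast u^*T_\Pf$ (cf. \cite[Prop 2.5]{CL}), the class $\ob_\Pf(v)\in H^1(C,u^*T_\Pf)$ is the obstruction to extending $u$ over the first-order deformation of $C$ represented by $v$; as $v$ is normal to $\cM_{(d)}\sub\cM$, this deformation smooths the node $p$ and is trivial on $C\setminus\{p\}$. Passing to an \'etale neighbourhood of $p$ in which $C$ is $\{xy=0\}$ (with $x$ on the $E$-branch, $y$ on the $R$-branch) and the deformation is $\{xy=\epsilon\}$, and choosing coordinates $w_1,\dots,w_4$ on $\Pf$ centred at $u(p)$ with $u^*w_j\equiv0$ on $E$ and $u^*w_j=c_jy+O(y^2)$ on $R$ (so that $(c_j)$ spans $u_\ast(T_{R,p})\sub T_{\Pf,u(p)}$), a \v{C}ech computation for the cover $\{$nbhd of $p$, $C\setminus\{p\}\}$ shows that the obstruction $1$-cocycle is supported near $p$, equals $\epsilon\,(c_j/x)$ plus a part regular at $p$ along the $E$-branch, and is regular at $p$ along the $R$-branch. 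The regular parts are coboundaries, so modulo coboundaries $\ob_\Pf(v)=\epsilon\cdot[1/x]\otimes u_\ast(\partial_y)$, where $[1/x]\in H^1(E,\sO_E)$ is the class of the \v{C}ech cocycle $1/x$ for the cover $\{$nbhd of $p$, $E\setminus\{p\}\}$.

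Finally $[1/x]\ne0$: since $E$ has arithmetic genus one and $p$ is a smooth point of $E$, one has $h^0(E,\sO_E(p))=1$, so there is no function on $E$ with only a simple pole at $p$ and $1/x$ is not a coboundary (equivalently, $[1/x]$ pairs nontrivially against the generator of $H^0(E,\omega_E)$ via the residue at $p$). Hence $\ob_\Pf(v)$ is a nonzero vector of the one-dimensional space $H^1(\sO_E)\otimes u_\ast(T_{R,p})$, and since the image of $H^1(\phi_{\cX/\cM}\dual)|_\xi$ in $H^1(E_{\cX/\cM}|_\xi)$ is exactly the span of $\ob_\Pf(v)$, the Lemma follows. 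I expect the main difficulty to be the \v{C}ech bookkeeping: extracting $\ob_\Pf$ faithfully from the construction of $\phi_{\cX/\cM}$ in \cite{CL}, fixing the local model for the node-smoothing obstruction, and checking that precisely the term $c_j/x$ survives modulo coboundaries while every contribution of arbitrary direction in $T_{\Pf,u(p)}$ is killed.
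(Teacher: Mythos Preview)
Your argument is correct but takes a different route from the paper's. The paper factors $u:C\to\Pf$ through the image curve $H=u(R)\cong\Po$, obtaining $\xi'=[u',C]\in\barM_1(H,d')$ with the same underlying curve, and then invokes the known fact (from \cite{Z,HL1}) that no first-order deformation of $[u',C]$ in $\barM_1(H,d')$ smooths the node; hence $\ob_H(v)\ne0$. Since the target $H^1(\sO_E)\otimes T_{H,u'(p)}$ is one-dimensional this already pins down the span, and compatibility of the obstruction theories under the closed immersion $H\hookrightarrow\Pf$ carries the answer to $H^1(\sO_E)\otimes u_\ast(T_{R,p})\sub H^1(\sO_E)\otimes T_{\Pf,u(p)}$.

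Your approach instead computes the obstruction cocycle by hand via a \v Cech calculation at the node, exhibiting it explicitly as $[1/x]\otimes u_\ast(\partial_y)$ and then checking $[1/x]\ne0$ from $h^0(E,\sO_E(p))=1$. The paper's proof is shorter and more conceptual, but imports the nonvanishing from an external reference and uses functoriality of obstruction assignments under morphisms of targets; your argument is self-contained and makes the class completely explicit, at the cost of the local bookkeeping you correctly flag as the main burden. Either way one lands on the same line in $H^1(\sO_E)\otimes T_{\Pf,u(p)}$.
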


\begin{proof}
Let $H=u(R)\sub \Pf$. Since $u|_R$ is a regular embedding, $H\sub \Pf$ is a smooth curve. We let $u': C\to H$
be the factorization of $u:C\to\Pf$. Thus $\xi'=[u',C]\in \barM_1(H, d')$, where $d'=u'\lsta[R]\in H_2(H,\ZZ)$.
We let $\barM_1(H,d')\to\cM$ be the tautological projection; thus $\underline{\xi}\in\cM$ is also the image of $\xi'$.

Since $H\cong \Po$, it is known that there is no first order deformation of $[u',C]$ in $\barM_1(H,d')$ whose image in
$T_{\underline{\xi}}\cM$ is $v$ (cf. \cite{Z}, \cite{HL1}).
Thus the image of $v$ under the obstruction assignment
$$\ob_H: H^0((L\bul_\cM)\dual)|_{\underline{\xi}})\lra 
H^1(E_{\barM_1(H,d')/\cM}|_{\xi'})=H^1(\sO_E)\otimes_{\CC} T_{H, u'(p)}
$$
is non-zero. Since $\dim H^1(\sO_E)=\dim T_{H,u'(p)}=1$, the linear span of $\ob_H(v)$ is
$H^1(\sO_E)\otimes_{\CC} T_{H, u'(p)}$.

Then, because the obstruction theories of moduli of stable morphisms to schemes are compatible via morphisms
between schemes, we conclude that the linear span of the image $\ob_\Pf(v)\sub H^1(E_{\cX/\cM}|_\xi)$
is identical to the image of the linear span of $\ob_H(u')$ under the canonical
$$
H^1(E_{\barM_1(H,d')/\cM}|_{\xi'})\lra H^1(E_{\barM_1(\Pf,d)/\cM}|_{\xi})= H^1(E_{\cX/\cM}|_\xi). 
$$
Adding $T_{H, u'(p)}=u\lsta(T_{R,p})$ as subspace in $T_{\Pf,u(p)}$, we prove the Lemma.
\end{proof}

We consider
$H^1((L\bul_{M\dcirc/\cD})\dual)\to H^1(E_{\cX/\cD}|_{M\dcirc})=V_1|_{M\dcirc}$ of the middle
vertical arrow in \eqref{diag-3}. By Lemma \ref{7.2}, $H^1((L\bul_{M\dcirc/\cD})\dual)$ is a rank two locally free sheaf
on $M\dcirc$, and the arrow is injective with locally free cokernel.  Let $S_\circ\sub V_1|_{M_\circ}$ be
associated sub-vector bundle.

We continue to denote by $\bar\gamma: \barW\to M$ the projection (cf. Section five). We let $\barW\dcirc=\barW\times_M M\dcirc$ and
$\bar\gamma\dcirc: \barW\dcirc\to M\dcirc$ the projection.
Recall that $\barV_1=\bar\gamma\sta V_1(-D_\infty)$. Thus $S_\circ\sub V_1|_{M\dcirc}$ provides a subbundle
\beq\label{sub-S}
\barS_\circ=\bar\gamma_\circ\sta S_\circ(-D_\infty)\sub \barV_1|_{\barW_\circ}.
\eeq

We let
$$\eta_\circ:\barV_1|_{\barW_\circ}\to \barV|_{\barW_\circ}=(\barV_1\oplus \barV_2)|_{\barW_\circ}
$$
be the inclusion;
let ${j}_\circ: \barW_\circ\to \barW$ be the open embedding, which is flat.
Recall $R_{(d)}=N_{\barC_{(d),b}}\overline C_{(d)}$ (cf. before \eqref{Rmu}, see also \eqref{int-11}).

\begin{lemm}\label{pi1}
As cycles, we have
\beq\label{NB}
{j}_{\circ}\sta [R_{(d)}]=\eta_{\circ\ast}[\barS_\circ]\in Z\lsta (\barV_1|_{\barW_\circ}).
\eeq
%
\end{lemm}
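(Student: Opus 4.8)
The plan is to prove the identity by comparing the two cycles over the dense open $W\dcirc\subset\barW\dcirc$, where the obstruction theory is tame, and then controlling the behaviour along the infinity divisor $D_\infty\cap\barW\dcirc$ using the $\CC\sta$-homogeneity of Lemma~\ref{5.7}. First, over $W\dcirc$: by Lemma~\ref{7.1}, $C_{(d)}|_{W\dcirc}$ is a rank two sub-vector-bundle of $\ti V|_{W\dcirc}=(\ti V_1\oplus\ti V_2)|_{W\dcirc}$, and (as $W\dcirc$ avoids $\ti\Delta$) the first half of Proposition~\ref{C-gst} shows it meets $0\oplus\ti V_2$ only along the zero section. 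Hence the projection $C_{(d)}|_{W\dcirc}\to\ti V_1|_{W\dcirc}$ is injective on fibres, so $C_{(d)}|_{W\dcirc}$ is the graph of a bundle homomorphism $g$ from its image to $\ti V_2|_{W\dcirc}$; the commutative left square of \eqref{diag-3}, together with $\alpha_1$ being an isomorphism and $\ti\beta_2|_{W\dcirc}$ being the obvious projection (Lemma~\ref{7.2}), identifies that image with $\gamma_\circ\sta S_\circ$. Consequently $\barC_{(d),b}|_{W\dcirc}$ is the zero section, and $R_{(d)}|_{W\dcirc}$, being the normal cone of the zero section in the vector bundle $C_{(d)}|_{W\dcirc}$, is that bundle itself; under the canonical identification $N_{(0\oplus\barV_2)/\barV}|_{W\dcirc}\cong\barV_1|_{W\dcirc}$ it becomes $\gamma_\circ\sta S_\circ=\barS_\circ|_{W\dcirc}$, with multiplicity one because $C_{(d)}|_{W\dcirc}$ is reduced.

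Second, I would dispose of the components of $\bC_{\tcy/\ti\cD}$ that do not dominate $\tcy_{(d)}$. Write $\barC_{(d)}=\barC_{(d)}'\cup\barC_{(d)}''$, where $\barC_{(d)}'$ is the closure of the multiplicity-one component of $\bC_{\tcy/\ti\cD}$ dominating $\tcy_{(d)}$ and $\barC_{(d)}''$ is the closure of the remaining components, which lie over $\ti\Delta_{(d)}$. Since $M\dcirc$ avoids the blowing-up locus of $\tcX\to\cX$, hence also $\ti\Delta$ (which lies over that locus), we get $\Delta_{(d)}\cap M\dcirc=\emptyset$, so $\overline{\ti\Delta}_{(d)}$ — and therefore the support of $\barC_{(d)}''$ — is disjoint from $\barW\dcirc$. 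As $j_\circ$ is flat, normal cones commute with base change along it, so $j_\circ\sta[R_{(d)}]=j_\circ\sta[\,N_{\barC_{(d),b}'}\barC_{(d)}'\,]$, with multiplicity one, and over $W\dcirc$ this cycle is $\barS_\circ|_{W\dcirc}$ by the first step.

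Third, I would cross $D_\infty$ using homogeneity. By Lemma~\ref{5.7} the integral cone $\bC_{(d)}'$ is homogeneous of weight $(0,1)$. In an affine chart of $M\dcirc$ trivializing $L$ and the $V_i$ — so that, as in the proof of Lemma~\ref{compactify}, $\barW\dcirc=M\dcirc\times\Po$ with fibre coordinate $t$, $D_\infty=\{t=\infty\}$, and $\barV_1=\bar\gamma\sta V_1(-D_\infty)$ carries near $D_\infty$ the trivialization $e_i=t\upmo\eps_i$ with coordinates $x_i'=t\,x_i$ — the weight-$(0,1)$ invariance forces $g_{(m,t)}=t\cdot h_m$ for a bundle homomorphism $h\colon S_\circ\to V_2$ over $M\dcirc$. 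Hence in the coordinates $(m,t\upmo;x';y)$ the graph of $g$ reads $\{x'\in S_\circ,\ y=h_m(x')\}$, which extends across $t\upmo=0$ and exhibits $\barC_{(d)}'|_{\barW\dcirc}$ as a rank two sub-vector-bundle of $\barV|_{\barW\dcirc}$ projecting isomorphically onto $\barS_\circ\subset\barV_1|_{\barW\dcirc}$. Then $\barC_{(d),b}'|_{\barW\dcirc}$ is again the zero section, so $R_{(d)}|_{\barW\dcirc}$ is the normal cone of the zero section in this bundle, namely the bundle itself, i.e. $\barS_\circ$ inside $\barV_1|_{\barW\dcirc}$ with multiplicity one; combined with the second step this yields $j_\circ\sta[R_{(d)}]=[\barS_\circ]=\eta_{\circ\ast}[\barS_\circ]$.

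The step I expect to be the main obstacle is the third one: a priori, passing to the closure $\barC_{(d)}'$ inside the twisted bundle $\barV_1$ could produce extra components supported over the divisor $D_\infty\cap\barW\dcirc$, which would then survive in $R_{(d)}$ and ruin the identity. Excluding them is precisely what the weight-$(0,1)$ homogeneity of $\bC_{(d)}'$ achieves, once it is processed through the coordinate substitution $t\mapsto t\upmo$ built into the definition $\barV_1=\bar\gamma\sta V_1(-D_\infty)$; everything else reduces to the elementary fact that the normal cone of the zero section of a vector bundle is that bundle.
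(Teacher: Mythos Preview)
Your overall strategy is the paper's: identify $C_{(d)}|_{W\dcirc}$ as a rank-two subbundle via Lemma~\ref{7.1}, read off its image in $\ti V_1$ as $\gamma_\circ\sta S_\circ$ from the left square of \eqref{diag-3}, and then cross $D_\infty$ using the weight-$(0,1)$ homogeneity. Your Step~3 is exactly the computation hidden in the paper's appeal to (the proof of) Lemma~\ref{compactify}: in the twisted basis $e_i=t\upmo\eps_i$ the defining ideal becomes $t$-independent, so the closure is a product over $\Po-0$ and hence a rank-two subbundle of $\barV|_{\barW\dcirc}$ projecting isomorphically to $\barS_\circ$. The normal-cone identification then follows as you say.

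There is one genuine slip in Step~2. Your claim that $\ti\Delta$ ``lies over the blow-up locus'' and hence $\Delta_{(d)}\cap M\dcirc=\emptyset$ is not correct: in the local chart of Proposition~\ref{coordinate} one has $\tcX_{\mathrm{pri}}=(z_1=\cdots=z_4=0)$ and $\tcX_{(d)}=(w_{(d)}=0)$, so $\Delta_{(d)}$ is a codimension-four locus inside $\tcX_{(d)}$ that typically meets the open $M\dcirc$. The right way to dispose of the extra components $\bC''_{(d)}$ is the one already implicit in your Step~1: Lemma~\ref{7.1} says the \emph{entire} cycle $C_{(d)}|_{W\dcirc}$ (not just $C'_{(d)}$) is the irreducible rank-two subbundle, so any other irreducible component of $C_{(d)}$ has support disjoint from $W\dcirc$; its image in $M$ is therefore contained in $M\setminus M\dcirc$, and after taking closures in $\barV$ it stays disjoint from $\barW\dcirc$. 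With this correction (or simply by working with the full $C_{(d)}$ from the start and skipping the $\bC'_{(d)}/\bC''_{(d)}$ split), your proof goes through and matches the paper's.
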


\begin{proof}
Lemma \ref{7.1} shows that $C_{(d)}\times_{W}{\Wc}$ is a rank two subbundle of $\ti V|_{\Wc}$.
By Proposition \ref{C-gst} and  Lemma \ref{compactify}, we have $\barC_{(d),b}\sub \barV_2$,
$\barC_{(d),b}\cap  \barV_2|_{\oWc}=
0_{\bar2}\times_{\barW} \barW_\circ$,
and $\barC_{(d)}\times_{\barW}\barW_\circ$ is a rank two subbundle of $\barV|_{\oWc}$.
Further, they fit into the commutative diagram (the left one is a Cartesian product)
$$
\begin{CD}
0_{\barV}\times_{\barW}\barW_\circ @>>>\barC_{(d)}\times_{\barW}{\oWc} @>\cong>>\barS_\circ \\
@VV{\subseteq}V@VV{\subseteq}V@VV{\subseteq}V \\
\barV_{2}|_{\oWc}@>{\bar\beta_1|_{\barW_\circ}}>> \barV|_{\barW_\circ}=(\barV_{1}\oplus \barV_{2})|_{\oWc}
@>{\bar\beta_2|_{\barW_\circ}}>>  \barV_1|_{\barW_\circ},
\end{CD}
$$
where ${\bar\beta_1|_{\barW_\circ}}$ and ${\bar\beta_2|_{\barW_\circ}}$ are the obvious inclusion and projection,
which implies that
$$R_{(d)}\times_{\barW}\barW_\circ= (N_{\barC_{(d),b}}\overline C_{(d)})\times_{\barW} {\oWc}=\eta_\circ(\barS_\circ) \sub \barV|_{\oWc}.
$$
Since $j_\circ:\barW_\circ\to \barW$ is an open embedding,
$${j}_\circ\sta[R_{(d)}]=[R_{(d)}\times_{\barW}\barW_\circ]=[\eta_\circ(\barS_\circ)]=
\eta_{\circ\ast}[\barS_\circ]\in Z\lsta(\barV_1|_{\barW_\circ}).
$$
This proves the Lemma.
\end{proof}

We pick a degree $d$ regular embedding $h:\Po\to\Pf$, viewed as a closed point in $\ti\cB_{(d)}=\barM_0(\Pf,d)$. We form
$$M_h=\{[u,C]\in M_\circ \mid u|_R\cong h\}\sub M_\circ.
$$
Using the convention introduced in the proof of Lemma \ref{6.4}, we have that $\barW_{(d)}\cup\bar F_{(d)}\sub \barV_{2,(d)}$,
where $\barW_{(d)}$ is the zero-section of $\barV_{2,(d)}$. We form
$\barW_h=\barW_\circ\times_{M_\circ} M_h$ and the inclusions 
$$j_h: \barW_h\lra  \barW_{(d)}\cup \bar F_{(d)},
\and J_h: \barV_1|_{\barW_h}\lra \barV_1\times_{\barW_{(d)}}(\barW_{(d)}\cup \bar F_{(d)}),
$$
where the last term is viewed as a vector bundle over $\barW_{(d)}\cup \bar F_{(d)}$.
Since $j_h(\barW_h)\cap\bar F_{(d)}=\emptyset$,
both ${j}_h$ and $J_h$ are proper, regular embeddings; thus the Gysin map ${j}^!_h$ and $J^!_h$ are
well-defined. 

\begin{lemm}\label{7.5}
The constant $c$ in in \eqref{c0} is given by the degree $c=\deg e({j}_h\sta \barV_1/{j}_h\sta \barS_\circ)$.
\end{lemm}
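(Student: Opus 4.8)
The plan is to pin down the rational number $c$ of \eqref{c0} by restricting that identity to the fibre of the proper morphism $\rho_{(d)}\circ\bar\gamma_{(d)}\colon\barW_{\!(d)}\to\ti\cB_{(d)}$ over the point $[h]$. Here $\ti\cB_{(d)}=\barM_0(\Pf,d)$ is irreducible and, since $\Pf$ is convex, smooth of dimension $5d+1$, while $[h]$ is automorphism-free; because $\dim[P_{(d),1}]=5d+1=\dim\ti\cB_{(d)}$, \eqref{c0} reads $(\rho_{(d)}\circ\bar\gamma_{(d)})_\ast[P_{(d),1}]=c[\ti\cB_{(d)}]$. Pulling this back along the regular embedding $[h]\hookrightarrow\ti\cB_{(d)}$ and invoking the compatibility of refined Gysin maps with proper pushforward, the left side becomes the degree of the refined pullback of $[P_{(d),1}]$ to the fibre over $[h]$. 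The first point to verify is that, by the modular description of $\tcX_{(d)}$ in Propositions \ref{coordinate} and \ref{lem6.1} together with the density of $W\dcirc$ in $W_{(d)}$, this fibre is exactly $\barW_h=\barW\dcirc\times_{M\dcirc}M_h$: any $[u,C]$ lying over $[h]$ has $u|_R\cong h$ (as $h$ is an embedding), hence lies in $M\dcirc$. Thus $c=\deg j_h^![P_{(d),1}]$, with $j_h^!$ the Gysin map of the regular embedding $j_h$ defined above.

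Next I would convert $j_h^![P_{(d),1}]$ into an intersection involving the cone $R_{(d)}$. Recall that $0^!_{\barV_{1,(d)}}[R_{(d)}]=[P_{(d),1}]+[P_{(d),2}]$ with $P_{(d),2}$ supported over $\bar F_{(d)}$ (cf. \eqref{int-11}). Since $j_h(\barW_h)\cap\bar F_{(d)}=\emptyset$ we have $j_h^![P_{(d),2}]=0$, so by commutativity of refined Gysin maps
$$j_h^![P_{(d),1}]=j_h^!\bigl(0^!_{\barV_{1,(d)}}[R_{(d)}]\bigr)=0^!_{j_h^\ast\barV_1}\bigl(J_h^![R_{(d)}]\bigr),$$
the outer $0^!$ being intersection with the zero section of $\barV_1|_{\barW_h}=j_h^\ast\barV_1$.

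The key step is the evaluation of $J_h^![R_{(d)}]$. By Lemma \ref{pi1} (equation \eqref{NB}), the cone $R_{(d)}$ restricts over the dense open $\barW\dcirc\supseteq\barW_h$ to the honest subbundle $\barS_\circ\subseteq\barV_1|_{\barW\dcirc}$; moreover $\barW_h\hookrightarrow\barW\dcirc$ is a regular embedding, being the $\bar\gamma\dcirc$-pullback of the inclusion $M_h\hookrightarrow M\dcirc$ of the fibre over $[h]$ of the morphism $M\dcirc\to\ti\cB_{(d)}$ (which is smooth near $[h]$). Hence $J_h^![R_{(d)}]$ is the subbundle $j_h^\ast\barS_\circ$ of $j_h^\ast\barV_1$, and intersecting a subbundle with the zero section of the ambient bundle gives the Euler class of the quotient:
$$j_h^![P_{(d),1}]=0^!_{j_h^\ast\barV_1}[j_h^\ast\barS_\circ]=e\bigl(j_h^\ast\barV_1/j_h^\ast\barS_\circ\bigr)\cap[\barW_h].$$
Taking degrees yields $c=\deg e(j_h^\ast\barV_1/j_h^\ast\barS_\circ)$, as asserted.

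The step I expect to be the main obstacle is the first: identifying the fibre $(\rho_{(d)}\circ\bar\gamma_{(d)})^{-1}([h])$ with $\barW_h$ and checking that the embeddings involved (notably $M_h\hookrightarrow M\dcirc$, hence $\barW_h\hookrightarrow\barW\dcirc$) are regular, which forces a careful appeal to the local structure of $\tcX_{(d)}$ from \cite{HL1} (Proposition \ref{coordinate}). Once this bookkeeping is in place, the remaining two steps are formal manipulations with refined Gysin homomorphisms and the self-intersection formula for the zero section of a vector bundle.
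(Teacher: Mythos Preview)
Your proposal is correct and follows essentially the same route as the paper's proof: both set up the Cartesian square
\[
\begin{CD}
\barW_h @>{j_h}>> Z_{(d)}=\barW_{(d)}\cup\overline F_{(d)}\\
@VVV @VV{\phi_{(d)}}V\\
[h] @>>> \ti\cB_{(d)},
\end{CD}
\]
use $j_h(\barW_h)\cap\overline F_{(d)}=\emptyset$ to drop $[P_{(d),2}]$, commute the Gysin maps $j_h^!$ and $0^!_{\barV_{1,(d)}}$, invoke Lemma \ref{pi1} to identify $J_h^![R_{(d)}]$ with $\barS_\circ|_{\barW_h}$, and then use Gysin--pushforward compatibility to extract $c$.

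One remark on your self-assessment: the step you flag as the ``main obstacle'' --- identifying $(\rho_{(d)}\circ\bar\gamma_{(d)})^{-1}([h])$ with $\barW_h$ and checking regularity of $j_h$ --- is lighter than you suggest. The paper dispatches it in one sentence before the Lemma: since $h$ is a regular \emph{embedding}, any $[u,C]\in\tcX_{(d)}$ over $[h]$ has $u|_R\cong h$ an embedding and hence lies in $M_\circ$; the fibre over $[h]$ is then $M_h$ on the nose, and $\barW_h$ is its $\Po$-bundle. Regularity of $j_h$ follows because $j_h(\barW_h)$ misses $\overline F_{(d)}$, so $j_h$ is just the base change of the regular embedding $[h]\hookrightarrow\ti\cB_{(d)}$ along the smooth $\barW_{(d)}\to\ti\cB_{(d)}$ (near $[h]$). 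No appeal to the local charts of Proposition \ref{coordinate} is needed here.
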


\begin{proof}
Let $\phi_h: \barW_h\to [h]$ be the projection to the point, and let $\phi_{(d)}$ be the projection
that fits into the Cartesian product
$$\begin{CD}
\barW_h @>{{j}_h}>> Z_{(d)}= \barW_{(d)}\cup \overline F_{(d)}\\
@VV{\phi_h}V @VV{\phi_{(d)}}V \\
[h] @>{\iota_h}>>   \ti\cB_{(d)}.
\end{CD}
$$
(Here $\phi_{(d)}$ is the composite of $Z_{(d)}\to \barW_{(d)}$ mentioned after \eqref{Zmu},
the morphism $\bar\gamma_{(d)}: \barW_{(d)}\to \tcX_{(d)}$, and the $\rho_{(d)}: \tcX_{(d)}\to \ti\cB_{(d)}$
constructed in \eqref{rhomu}.)

Since $j_h(\barW_h)\cap \overline F_{(d)} =\emptyset$, $j_h^![P_{(d),2}]=0$. Thus
$${j}_h^![P_{(d),1}]={j}_h^!([P_{(d),1}]+[P_{(d),2}])={j}_h^! 0_{\barV_{1,(d)}}^![R_{(d)}].
$$
Since Gysin maps commute, we obtain
$${j}_h^![P_{(d),1}]={j}_h^! 0_{\barV_{1,(d)}}^![R_{(d)}]
=0^!_{{j}_h\sta \barV_1}[J_h^! R_{(d)}]=0_{{j}_h\sta\barV_1}^![\barS_\circ|_{\barW_h}]
=e({j}_h\sta \barV_1/{j}_h\sta \barS_\circ).
$$
On the other hand, since the Gysin maps commute with proper push-forwards,
$$\phi_{h\ast}\bl e({j}_h\sta \barV_1/{j}_h\sta \barS_\circ)\br=\phi_{h\ast}\bl {j}_h^![P_{(d),1}]\br=
 \imath_h^!\bl\phi_{(d)\ast}  [P_{(d),1}]\br=\imath_h^!\bl c[\ti\cB_{(d)}]\br=c.
$$
This proves the Lemma.
\end{proof}

Finally, we evaluate $c$.
Let $\cE\to \barM_{1,1}$ with $s_1: \barM_{1,1}\to \cE$ be the universal family of 
(the moduli of pointed elliptic curves) $\barM_{1,1}$.
We form
$$\varphi: B=\barM_{1,1}\times\Po\mapright{\cong} M_h\sub M_\circ
$$
as follows. Let $q_1$ and $q_2$ be the first and the second projections of $B$.
We denote $q_1\sta\cE=\cE\times_{\barM_{1,1}} B$ with $q_1\sta s_1: B\to q_1\sta\cE$ be
the pullback section of $s_1$. Over $\Po$, we form $\cR=\Po\times\Po\to \Po$ (the second projection)
the constant family of $\Po$ over $\Po$, and let $s_2: \Po\to\cR$ be its section so that the image $s_2(\Po)\sub \Po\times\Po$ is
the diagonal. We let $q_2\sta\cR\to B$ with $q_2\sta s_2: B\to  q_2\sta\cR$ the pullback family with section.
We then glue $q_1\sta s_1(B)\sub q_1\sta\cE$ with $q_2\sta s_2(B)\sub q_2\sta\cR$ to form a
$B$-family of genus one nodal curves, denoted by $\pi_B: \cC_B\to B$.
We define
$f_B: \cC_B\lra \Pf$
be the composite
$$f_B: \cC_B=q_1\sta\cE\cup q_2\sta \cR\mapright{\text{ctr.}} q_2\sta \cR \mapright{\pr} \Po\mapright{h}\Pf,
$$
where the first arrow is contracting $q_1\sta\cE$, namely it maps $q_1\sta\cE$ onto $q_2\sta s_2(B)\sub q_2\sta \cR$
and is the identity on $q_2\sta\cR$; $\pr$ is the projection to the fiber of the constant family $q_2\sta \cR\to\Po$,
and $h$ is the map we picked.

It is clear that $(\pi_B, f_B): \cC_B\to B\times\Pf$ is a family of stable morphisms in $\cX=\barM_1(\Pf,d)$, thus
induces a morphism $B\to\cX$ that factors through $M_h\sub \cX$ and induces an isomorphism $B\cong M_h$.
In the following, we will not distinguish $B$ from $M_h$; in particular, $(\cC_B,f_B)$ is the tautological family
on $B\cong M_h\sub \cX$.

Let $\cH_B=\pi_{B\ast}\omega_{\cC_B/B}\cong (R^1\pi_{B\ast} \sO_{\cC_B})\dual$ be the Hodge bundle over
$B$. Using the family $(\cC_B,f_B)$, and because of (\ref{diag-3}), Lemma \ref{7.2} and Lemma \ref{7.3} , we have
$$(V_1/S_\circ)|_{M_h}=\sH_B\dual\otimes_{\sO_B} q_2\sta(h\sta T_{\Pf}/T_{\Po})=\sH_B\dual\otimes_{\sO_B} q_2\sta N_{R/\Pf},
$$
where $R=h(\Po)\sub\Pf$, and $N_{R/\Pf}$ is the normal bundle to $R$ in $\Pf$.
Also, for the line bundle $L$ on $M$ defined before \eqref{VV} and giving $W=\text{Total}(L)$, we have
$$L_B\defeq L|_{M_h}=\pi_{B\ast}(f_B\sta\sO(-5)\otimes \omega_{\cC_B/B})\cong
\sH_B\otimes_{\sO_B}(q_2\sta s_2)\sta f_B\sta\sO(-5).
$$
Thus, $\barW_h=\PP_B(L_B\oplus \sO_B)$, and for $j_h: \barW_h\to Z_{(d)}\supset \barW_{(d)}$, we have
$D_B:=\PP(0\oplus \sO_B)=j_h\upmo(D_\infty)$.
Following the construction of $\barV_1$ and $\barS_\circ$, we have
\beq\label{after4years} j_h\sta \barV_1/ j_h\sta \barS_\circ \cong
\bar\gamma_h^\ast({q}_2^\ast N_{R/\Pf} \otimes \sH_B\dual)(-D_B).
\eeq

Let $\xi\in A^1\Po$ and  $\zeta\in A^1\barM_{1,1}$ be defined via
$c_1(\sH)=\frac{1}{24} \zeta$ and $c(T_\Po)=1+2\xi$, where $\sH$ is the Hodge bundle on $\barM_{1,1}$.
We calculate
$c(h^\ast T_\Pf)=1+5d{\xi}$, and $c(N_{R/\Pf})=
1+(5d-2){\xi}$.
Let ${\bar\xi}=\bar\gamma_h^\ast{q}_2^\ast {\xi}$ and ${\bar\zeta}=\bar\gamma_h^\ast {q}_1^\ast {\zeta}\in A^1 \barW_h$
be pullbacks of $\xi$ and $\zeta$ via $\barW_h$ to $\Po$ and to $\barM_{1,1}$, respectively.
We calculate
\beq\label{cp}
c(\bar\gamma_h^\ast {q}_2^\ast  N_{R/\Pf})=1+(5d-2){\bar\xi}\and c(\bar\gamma_h^\ast{q}_1^\ast \sH\dual )=1-\frac{1}{24}{\bar\zeta}.
\eeq

Let $F\in A^2\barW_h$ be the Poincare dual of the fiber class of $\bar\gamma_h:\barW_h\to M_h$.
Using ${\bar\xi}{\bar\zeta}=F$ and ${\bar\xi}^2={\bar\zeta}^2=0$, \eqref{cp} gives
$$ c\bl\bar\gamma_h^\ast({q}_2^\ast N_{R/\Pf}\otimes {q}_1^\ast \sH\dual)\br=
1+\bl(5d-2){\bar\xi}-\frac{1}{8}\bar\zeta)-\frac{5d-2}{12}\cdot F.
$$
Hence the euler class
\beq\label{shot} e(j_h\sta \barV_1/ j_h\sta \barS_\circ)=[-D_B]^3+\bl(5d-2){\bar\xi}-\frac{1}{8}\bar\zeta)\cdot [D_B]^2-  (5d-2)/12\cdot F\cdot [D_B].
\eeq
(Here we view $[D_B]\in A^1\barW_h$ as the Poincare dual of the cycle $D_B$ in $\barW_h$.)

Let $\tau: D_B\to B=M_h$ be the projection (isomorphism). We compute each term.
First, direct calculations give
$c_1[N_{D_B/\barW_h}]=5d \tau^\ast{q}_2^\ast{\xi}-\frac{1}{24}\tau^\ast{q}_1^\ast{\zeta}$,
$a\cdot [D_B]^2 =\frac{-1}{24}$, and $b\cdot [D_B]^2=5d$. Thus the middle term in \eqref{shot}
is $\frac{1}{12}-\frac{5d}{6}$. The $c_1[N_{D_B/\barW_h}]$ just calculated implies
{ $[D_B]\cdot \tau^\ast{q}_2^\ast{\xi}
=\frac{-1}{24}$ and
$[D_B]\cdot \tau^\ast {q}_1^\ast{\zeta}
=5d$.}
Using $[D_B]^2=c_1[N_{D_B/\barW_h}]$,  we obtain $-[D_B]^3=-[D_B]^2\cdot [D_B]=\frac{5d}{24}+\frac{5d}{24}=\frac{5d}{12}.$
Finally, we calculate
$-\frac{(5d-2)[F]}{12}\cdot(-[D_B])=\frac{5d-2}{12}$.
Hence (\ref{shot}) is equal to $-\frac{1}{12}$. By Lemma \ref{7.5}, $c=-\frac{1}{12}$.

\begin{proof}[Proof of Proposition \ref{prop3}]
By (\ref{Rmu}), Lemma \ref{6.4} and (\ref{c?}) we conclude
$$\deg 0_{\ti\si,\loc}^![\bC_{(d)}]= 0^!_{\barV_{2,(d)}}[P_{(d),1}]=c\cdot \deg c_{5d+1}(\ti\sK_{(d)})[\ti\cB_{(d)}]=\frac{(-1)^{5d}}{12} N_0(d)_Q.
$$
This completes the algebro-geometric proof of the hyperplane property of the reduced genus one GW-invariants of
the quintics.
\end{proof}

\bibliographystyle{amsplain}

\end{document}